\newtheorem{theorem}{Theorem}
\newtheorem{lemma}{Lemma}[section]
\newtheorem{corollary}{Corollary}[section]
\newtheorem{question}{Problem}
\theoremstyle{definition}
\newtheorem{remark}{Remark}
\newtheorem*{remark-nonum}{Remark}
\newtheorem*{example}{Example}
\numberwithin{equation}{section}
\newcommand{\vertbar}{\>|\>}
\newcommand{\set}[2]{\ensuremath{\{ #1 \vertbar #2 \}}}
\def\liebrack{\ensuremath{[\,\cdot\, , \cdot\,]}}
\DeclareMathOperator{\ad}{ad}
\DeclareMathOperator{\C}{C}
\DeclareMathOperator{\Cent}{Cent}
\DeclareMathOperator{\diag}{diag}
\DeclareMathOperator{\End}{End}
\DeclareMathOperator{\HC}{HC}
\DeclareMathOperator{\Hom}{Hom}
\DeclareMathOperator{\HomLie}{HomLie}
\DeclareMathOperator{\Homol}{H}
\DeclareMathOperator{\Ker}{Ker}
\DeclareMathOperator{\R}{R}
\DeclareMathOperator{\Tr}{Tr}
\DeclareMathOperator{\Z}{Z}
\begin{document}

\title{A compendium of Lie structures on tensor products}
\author{Pasha Zusmanovich}
\address{}
\email{justpasha@gmail.com}
\date{First written August 22, 2012; last minor revision February 5, 2014}
\thanks{\textsf{arXiv:1303.3231}; 
Zapiski Nauchn. Sem. POMI \textbf{414} (2013) (N.A. Vavilov Festschrift), 
40--81}

\begin{abstract}
We demonstrate how a simple linear-algebraic technique used earlier to compute 
low-degree cohomology of current Lie algebras, can be utilized to compute
other kinds of structures on such Lie algebras, and discuss further 
generalizations, applications, and related questions.
While doing so, we touch upon such seemingly diverse topics as 
associative algebras of infinite representation type, Hom-Lie structures,  
Poisson brackets of hydrodynamic type, Novikov algebras, 
simple Lie algebras in small characteristics, and Koszul dual operads.
\end{abstract}

\maketitle

\setcounter{footnote}{1}

\section*{Introduction}

Earlier, we have demonstrated how a simple 
linear-algebraic technique, somewhat resembling separation of variables
in differential equations, allows to obtain general results about the 
low-degree cohomology of current Lie algebras, i.e. Lie algebras of the form 
$L \otimes A$, where $L$ is a Lie algebra, and $A$ is an associative 
commutative algebra, with an obvious Lie bracket
$$
[x \otimes a, y \otimes b] = [x,y] \otimes ab
$$
for $x,y \in L$, $a,b\in A$. On a pedestrian level, kept through the most of 
\cite{low} and \cite{without-unit}, this technique amounts to symmetrization of
cocycle equations with respect to variables belonging to $L$ and $A$ separately.
On a more sophisticated level, the method can be formulated in terms of a 
certain spectral sequence defined with the help of Young symmetrizers on the 
underlying spaces $L$ and $A$ (described briefly in \cite[\S 4]{low}).

In this, somewhat eclectic, paper we discuss further generalizations, 
applications and extensions of this method. We start in \S \ref{sec-fin} by 
providing examples of finite-dimensional Lie algebras having an infinite number
of cohomologically nontrivial non-isomorphic modules of a fixed finite 
dimension. This answers a question and corrects a statement from an (old) paper
by Dzhumadil'daev \cite{dzhu-al}.
The Lie algebras in our example are current Lie algebras, and our technique 
allows to reduce the question about the number of non-isomorphic cohomologically
nontrivial ``current'' modules over such algebras to a similar, but purely
representation-theoretic (i.e. not appealing to any cohomological condition) 
question about the number of non-isomorphic modules over the corresponding 
associative commutative algebra $A$.

Further, we consider a question of description of Poisson structures on a given 
Lie algebra -- i.e., description of all (not necessary associative) algebra 
structures which, together with a given Lie structure, form a Poisson algebra. 
The Poissonity condition resembles the cocycle equation, and this allows to 
apply the same methods used previously to compute low-degree cohomology of 
current Lie algebras. In \S \ref{sec-poisson-current} we describe Poisson 
structures on current and Kac--Moody Lie algebras, and in 
\S \ref{sec-poisson-gl} -- on Lie algebras of the form $\mathsf{sl}_n(A)$. 
Associative Poisson structures on Kac--Moody algebras were described earlier by
Kubo in \cite{kubo-kac-moody} via case-by-case computations, and we generalize 
and provide a conceptual explanation of Kubo's result.

In \S \ref{sec-homlie} we compute, in a similar way, Hom-Lie structures 
on current Lie algebras.

In the last \S \ref{sec-dual} we discuss a possibility of extension of all the 
previous results from current Lie algebras to Lie algebras obtained from the 
tensor product of algebras over Koszul dual binary quadratic operads, with a 
special emphasis on Novikov algebras. This is the most speculative part of the 
paper: it does not contain a single theorem, but numerous examples, 
speculations, and questions, including an observation that the classical 
construction of ``Poisson brackets of hydrodynamic type'' due to 
Gelfand--Dorfman and Balinskii--Novikov, arises in this context. (However, this
simple observation is, perhaps, the most important result of the paper).

\bigskip

This is a modest contribution in honor of Nikolai Vavilov, an amazing man and 
amazing mathematician. There is an obvious similarity between current Lie 
algebras, the main heroes of this paper, and linear and close to them groups 
over rings, Vavilov's favorite objects of study. The latters, however, offer 
much deeper difficulties, and it is my hope to extend in the future the methods
and ideas of this paper to be able to tackle some questions close to Vavilov's 
heart.

\section*{Notation and conventions}

Our notation is mostly standard.
The base field is denoted by the letter $K$.
By $\Homol^n(L,M)$ is denoted the $n$th degree Chevalley-Eilenberg cohomology of
a Lie algebra $L$ with coefficients in an $L$-module $M$. The symbol
$\Hom_A$ for a (Lie or associative) algebra $A$ is understood in the category 
of $A$-modules (in particular, $\Hom_K$ is just the vector space 
of linear maps between its arguments). 
The notation $\End_A(V)$ serves as a shortcut for $\Hom_A(V,V)$.
All unadorned tensor products are assumed over $K$. Direct sum $\oplus$ is
always understood in the category of vector spaces over $K$.
An operator of multiplication by an element $u$ in an associative commutative
algebra is denoted by $\R_u$. For a (not necessary associative) algebra $A$,
$A^{(-)}$ denotes its \emph{skew-symmetrization}, i.e. an algebra structure
on the same underlying vector space $A$ subject to the new multiplication
$[a,b] = ab - ba$ for $a,b \in A$. An algebra is called \emph{Lie-admissible},
if its skew-symmetrization is a Lie algebra.

All other necessary notation and notions are defined as they are introduced in 
the text.

\section{On the finiteness of the number of cohomologically nontrivial Lie 
modules}\label{sec-fin}

In 1980s, Askar Dzhumadil'daev has initiated a study of cohomology of
finite-dimensional Lie algebras over fields of positive characteristic,
which showed a (long time anticipated) drastic difference with the 
characteristic zero case. It turned out that cohomology in positive 
characteristic does not vanish much more often that in the characteristic zero
case (for example, Whitehead lemmas do not hold), and a natural question arose 
about the number of cohomologically nontrivial modules. 

One of the papers of that period is \cite{dzhu-al}, and we start our discussion
by indicating one inaccuracy there. In \cite[Theorem 1]{dzhu-al} it is claimed 
that, for any finite-dimensional Lie algebra $L$ over a field of positive 
characteristic, the number of cohomologically nontrivial non-isomorphic 
$L$-modules of a fixed finite dimension is finite. 
There is a following gap in the proof of this statement: when arguing 
inductively about indecomposable modules over a Lie algebra $L$, it is claimed 
that non-equivalent extensions of $L$-modules $0 \to V_1 \to V \to V_2 \to 0$ 
are described by the space $\Homol^1(L, \Hom_K(V_1,V_2))$ which is 
finite-dimensional, and hence the set of such extensions is finite provided the
set of modules $V_1$ and $V_2$ is finite.
This is obviously incorrect: the extensions are in bijective correspondence with
the space $\Homol^1(L, \Hom_K(V_1,V_2))$ as a \textit{set}, which is infinite
(provided the base field $K$ is infinite).

This does not, however, exclude the possibility that the set of 
\textit{non-isomorphic} modules would be finite, as there can be ad hoc 
module isomorphisms which do not follow from the equivalence of modules. 

In the category of associative algebras, where extensions of modules are 
described, up to equivalence, by the corresponding first Hochschild cohomology,
it is known that algebras can have both finite and infinite number of 
indecomposable modules of a fixed finite dimension.

The situation in the category of finite-dimensional Lie $p$-algebras was studied
in \cite{fs}. It turns out that Lie $p$-algebras having a finite number of 
indecomposable $p$-modules of a fixed finite dimension, form quite a narrow
class.

Still, from all this it is not clear whether the statement of 
\cite[Theorem 1]{dzhu-al} is true. The aim of this section is to show that it is
not: we provide a recipe to construct Lie algebras, of arbitrarily
high finite dimension, having an infinite number of non-isomorphic 
cohomologically nontrivial modules of a fixed finite dimension. These Lie 
algebras come as current Lie algebras $L\otimes A$, and the modules appear as 
the corresponding ``current modules'', i.e. $L\otimes A$-modules of the form 
$M\otimes V$, where $M$ is an $L$-module and $V$ is an $A$-module, with an 
obvious action
$$
(x\otimes a) \bullet (m\otimes v) = (x\bullet m) \otimes (a\bullet v)
$$
for $x\in L$, $a\in A$, $m\in M$, $v\in V$; the bullets denote, by abuse of 
notation, the respective module actions.
By choosing suitably $L$ and $M$, we are able to reduce the question to those
about the finiteness of the number of modules of a given dimension over $A$, 
i.e. to a similar (but simpler, as it does not involve any cohomological 
condition) question in the category of associative commutative algebras.

This construction works in any characteristic of the base field,
so it also answers affirmatively Question 1 from \cite{dzhu-al}: does there exist
finite-dimensional Lie algebras over a field of characteristic zero having
an infinite number of cohomologically nontrivial non-isomorphic modules of a 
given finite dimension?

\begin{lemma}\label{embed}
Let $L$ be a Lie algebra, $A$ an associative commutative algebra with unit, 
$M$ an $L$-module, and $V$ an unital $A$-module. Then there is an embedding of 
$\Homol^*(L,M) \otimes V$ into $\Homol^*(L\otimes A, M\otimes V)$.
\end{lemma}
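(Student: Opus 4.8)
The plan is to write down an explicit chain map realizing the embedding at the level of cochain complexes. Recall that $\Homol^*(L,M)$ is the cohomology of the Chevalley–Eilenberg complex $C^n(L,M) = \Hom_K(\Lambda^n L, M)$ with the standard differential $d_L$, and similarly $\Homol^*(L\otimes A, M\otimes V)$ is computed from $C^n(L\otimes A, M\otimes V) = \Hom_K(\Lambda^n(L\otimes A), M\otimes V)$. Fix the unit $1 \in A$. For a cochain $\varphi \in C^n(L,M)$ and a vector $v \in V$, I would define $\Phi_{\varphi,v} \in C^n(L\otimes A, M\otimes V)$ by
\[
\Phi_{\varphi,v}(x_1\otimes a_1, \dots, x_n\otimes a_n) = \varphi(x_1,\dots,x_n)\otimes (a_1\cdots a_n \bullet v),
\]
extended multilinearly; this is well defined and alternating because $A$ is commutative.

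The key computation is that this assignment is a chain map, i.e. $\Phi_{d_L\varphi, v} = d_{L\otimes A}\Phi_{\varphi,v}$. This is where the commutativity of $A$ and the unitality of $V$ do the work: when one expands the Chevalley–Eilenberg differential on $L\otimes A$, the bracket terms $[x_i\otimes a_i, x_j\otimes a_j] = [x_i,x_j]\otimes a_ia_j$ reassemble, after collecting the scalar factors $a_1\cdots \widehat{a_i}\cdots\widehat{a_j}\cdots a_n \cdot a_ia_j = a_1\cdots a_n$, into exactly the bracket terms of $d_L\varphi$ tensored with $a_1\cdots a_n\bullet v$; and the action terms $(x_i\otimes a_i)\bullet(\varphi(\dots)\otimes(\cdots\bullet v))$ produce $(x_i\bullet\varphi(\dots))\otimes(a_1\cdots a_n\bullet v)$, matching the action terms of $d_L\varphi$. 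I would present this reduction to the single display above rather than grinding through all signs, noting the signs match termwise. Passing to cohomology, we get a linear map $\Homol^n(L,M)\otimes V \to \Homol^n(L\otimes A, M\otimes V)$ sending $[\varphi]\otimes v$ to $[\Phi_{\varphi,v}]$; bilinearity in $(\varphi,v)$ is clear, so it factors through the tensor product.

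The remaining, and genuinely delicate, point is injectivity — that this map is an \emph{embedding}, not merely a homomorphism. Here I would construct a retraction. Choosing a $K$-linear projection $\pi\colon A \to K\cdot 1 \cong K$ that is the identity on $1$ (for instance the projection killing a complement of $K\cdot 1$ — note this does not require $A$ to be finite-dimensional, only that $K\cdot 1$ is a direct summand, which holds since $1\neq 0$), induces a map $C^n(L\otimes A, M\otimes V)\to C^n(L, M\otimes V)$ by restriction along $x\mapsto x\otimes 1$, and one checks it is a chain map whose composition with $\Phi_{-,v}$ recovers $\varphi\otimes v$ up to the identification $M\otimes(K\cdot v)\cong M$. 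The main obstacle is bookkeeping: making sure the retraction really is a cochain map and that the composite is the identity on cohomology classes, since a priori one only controls things at the cochain level on the image of $\Phi$. I expect the cleanest route is to exhibit, for each fixed $v$ in a basis of $V$, a splitting at the complex level and then assemble over a basis of $V$, using that $M\otimes V = \bigoplus_v M\otimes Kv$ respects the differential on $L\otimes A$ restricted along $L\otimes 1$.
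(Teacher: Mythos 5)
Your proposal is correct and takes essentially the same approach as the paper: the identical cochain map $\varphi\otimes v\mapsto\varphi(x_1,\dots,x_n)\otimes(a_1\cdots a_n)\bullet v$, with injectivity obtained by restricting along $L\cong L\otimes 1\subseteq L\otimes A$ (the paper phrases your retraction as ``setting all $a_i$'s to $1$'' in the coboundary equation, and handles the identification of $\C^{*}(L,M\otimes V)$ with $\C^{*}(L,M)\otimes V$ exactly as you do, via the direct-sum decomposition of $M\otimes V$). One cosmetic remark: the auxiliary projection $\pi\colon A\to K\cdot 1$ plays no role --- restriction along $x\mapsto x\otimes 1$ is already the retraction --- and since that retraction is a chain map whose composite with $\Phi$ is the canonical inclusion at the cochain level, the passage to cohomology is automatic rather than an ``obstacle.''
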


\begin{proof}
Define a map of the Chevalley--Eilenberg cochain complexes
$$
\xi: \C^n(L,M) \otimes V \to \C^n(L\otimes A, M\otimes V)
$$
as follows:
$$
\xi(\varphi \otimes v) (x_1\otimes a_1, \dots, x_n\otimes a_n) =
\varphi(x_1, \dots, x_n) \otimes (a_1 \cdots a_n) \bullet v
$$
where $\varphi\in \C^n(L,M)$, $v\in V$, $x_i\in L$, $a_i\in A$.

Let us see how differentials in the complexes 
$\C^*(L\otimes A, M\otimes V)$ and $\C^*(L,M)$, denoted as $d_{L\otimes A}$ and 
$d_L$ respectively, interact with this map:
\begin{align*}
d_{L\otimes A} & \xi(\varphi \otimes v) (x_1 \otimes a_1, \dots, x_{n+1} \otimes a_{n+1}) 
\\ &= 
\>\>\>\>\> \sum_{i=1}^{n+1} \>\>\>\>\>\>\>\> (-1)^{i+1} (x_i \otimes a_i) \bullet 
\xi(\varphi \otimes v) (x_1 \otimes a_1, \dots, \widehat{x_i \otimes a_i}, \dots, 
x_{n+1} \otimes a_{n+1}) \\ &+
\sum_{1 \le i < j \le n+1} (-1)^{i+j}
\xi(\varphi \otimes v) ([x_i \otimes a_i, x_j \otimes a_j], x_1 \otimes a_1, \dots, \widehat{x_i \otimes a_i},
\dots, \widehat{x_j \otimes a_j}, \\ & \hskip 212pt \dots, 
x_{n+1} \otimes a_{n+1})
\\ &=
\>\>\>\>\> \sum_{i=1}^{n+1} \>\>\>\>\>\> (-1)^{i+1} 
x_i \bullet \varphi(x_1, \dots, \widehat{x_i}, \dots, x_{n+1}) \otimes 
(a_i a_1 \cdots \widehat{a_i} \cdots a_{n+1}) \bullet v \\ 
&+
\sum_{1 \le i < j \le n+1} (-1)^{i+j}
\varphi ([x_i,x_j], x_1, \dots, \widehat{x_i}, \dots, \widehat{x_j}, \dots, x_{n+1}) 
\\ & \qquad\qquad\qquad\qquad\quad \otimes (a_i a_j a_1 \cdots \widehat{a_i} \cdots \widehat{a_j} \cdots a_{n+1}) \bullet v 
\\ & =
d_L \varphi(x_1, \dots, x_{n+1}) \otimes (a_1 \cdots a_{n+1}) \bullet v 
=
\xi(d_L \varphi \otimes v) (x_1 \otimes a_1, \dots, x_{n+1} \otimes a_{n+1}) .
\end{align*}

Thus $\xi$ is a homomorphism of cochain complexes, and hence it induces the 
map between their cohomology:
$$
\overline\xi: \Homol^*(L,M) \otimes V \to \Homol^*(L\otimes A, M\otimes V) .
$$
The kernel of $\overline\xi$ consists of classes of cocycles
$\sum_k \varphi_k \otimes v_k \in \Z^n(L,M) \otimes V$ such that 
$$
\sum_k \varphi_k (x_1, \dots, x_n) \otimes (a_1 \cdots a_n) \bullet v_k = 
d_{L\otimes A} \Omega (x_1\otimes a_1, \dots, x_n\otimes a_n)
$$ 
for some $\Omega \in \C^{n-1}(L\otimes A, M\otimes V)$ and all 
$x_1, \dots, x_n \in L$, $a_1, \dots, a_n \in A$. Setting in this equality all
$a_i$'s to $1$, we get
\begin{equation}\label{yoyo13}
\sum_k \varphi_k (x_1, \dots, x_n) \otimes v_k = 
\sum_{\ell} d_L \omega_\ell (x_1, \dots, x_n) \otimes u_\ell ,
\end{equation}
where $\sum_\ell \omega_\ell \otimes u_\ell$ is an image of the restriction of  
$\Omega$ to $\bigwedge^{n-1} (L\otimes 1)$, under the canonical isomorphism 
$\C^{n-1}(L, M\otimes V) \simeq \C^{n-1}(L,M) \otimes V$.

The equality (\ref{yoyo13}) implies that some nontrivial linear 
combination of $\varphi_k$'s is a coboundary, hence $\overline\xi$ is injective,
and the statement of the lemma follows.
\end{proof}

As simple as it is, it is curious to note that this is essentially the same
reasoning as in \cite[Lemma]{invar-forms} about cohomology of generalized 
de Rham complex.

Generally, a full computation of cohomology $\Homol^*(L\otimes A, M\otimes V)$,
except for some special cases in low degrees, is a hopeless task, and this 
embedding is very far from being surjective (for cohomology of degree $2$, see 
\cite[Proposition 3.1]{low}).

\begin{lemma}\label{hom}
Let $L$ be a Lie algebra, $A$ an associative commutative algebra with unit, 
$M$ an $L$-module, and $V_1$, $V_2$ two unital $A$-modules. Suppose that
either $M$, or both $V_1$, $V_2$ are finite-dimensional. Then
\begin{multline}\label{eq-isom}
\Hom_{L\otimes A}(M \otimes V_1, M \otimes V_2) \\ \simeq
\set{\varphi \in \End_K(M)}{\varphi(LM) = 0; L\varphi(M) = 0} 
\otimes \Hom_K(V_1,V_2) + \End_L(M) \otimes \Hom_A(V_1,V_2).
\end{multline}
\end{lemma}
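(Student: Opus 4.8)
The plan is to analyze an arbitrary $L\otimes A$-module homomorphism $\Phi\colon M\otimes V_1 \to M\otimes V_2$ by decomposing it along a basis and then exploiting the two separate actions — that of $L$ through the first tensor factor and that of $A$ through the second — in the same separation-of-variables spirit used for the cohomology computations. First I would pick a basis $\{v_i\}$ of $V_1$ and write, for each $m\in M$ and each $i$, $\Phi(m\otimes v_i)=\sum_j \varphi_{ij}(m)\otimes w_j$ for a basis $\{w_j\}$ of $V_2$ (if the $V$'s are infinite-dimensional but $M$ is finite-dimensional one argues dually, expanding in a basis of $M$ instead; this is exactly where the finiteness hypothesis is used, to guarantee the sums are finite and the bookkeeping makes sense). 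This presents $\Phi$ as an element $\sum \varphi_{ij}\otimes E_{ij} \in \End_K(M)\otimes\Hom_K(V_1,V_2)$, and the task is to translate the condition that $\Phi$ commutes with the action of every $x\otimes a$ into conditions on the $\varphi_{ij}$.

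Next I would unwind the intertwining condition $\Phi\big((x\otimes a)\bullet(m\otimes v)\big)=(x\otimes a)\bullet\Phi(m\otimes v)$. Applying it with $a=1$ (using that the $A$-modules are unital) isolates the ``$L$-part'': it forces the relevant components of $\Phi$ to be $L$-module maps $M\to M$, contributing the summand $\End_L(M)\otimes\Hom_A(V_1,V_2)$ once one checks the $V$-coefficients assemble into an $A$-module map. The complementary possibility is that $x\bullet m$ or the relevant $x$-action on the target vanishes under $\varphi$, which is precisely the description of the first summand $\set{\varphi\in\End_K(M)}{\varphi(LM)=0;\ L\varphi(M)=0}\otimes\Hom_K(V_1,V_2)$. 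So the heart of the argument is: for fixed $x$, the condition relating the $\varphi_{ij}$ across different $i,j$ splits, after symmetrizing/antisymmetrizing in the $A$-variables versus the $L$-variable, into ``$\varphi$ is $L$-linear'' plus ``$\varphi$ kills the $L$-action on both sides,'' and the $A$-side coefficients are correspondingly either arbitrary linear maps or genuine $A$-module maps. One should also verify the reverse inclusion — that both displayed summands do land inside $\Hom_{L\otimes A}(M\otimes V_1,M\otimes V_2)$ — which is a direct check.

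The main obstacle I expect is not the ``easy'' inclusion but showing that the right-hand side of \eqref{eq-isom} is really a direct-type description with no overcounting or missing pieces: one must show every intertwiner decomposes as a sum of the two pure types, with no ``mixed'' solutions. Concretely, after fixing $x\in L$ one gets, for the coefficient functions, a relation of the shape $\varphi(x\bullet m) - x\bullet\varphi(m)$ expressed through a matrix built from the $A$-module structures of $V_1$ and $V_2$; rank/support considerations on that matrix — exactly analogous to the linear-algebra lemma behind \cite{low} — should force each coefficient into one of the two regimes. I would handle the two hypotheses ($M$ finite-dimensional versus both $V_i$ finite-dimensional) by a symmetry of the roles of the two tensor factors, so that only one case needs the detailed argument and the other follows by interchanging $\End_K(M)\leftrightarrow\Hom_K(V_1,V_2)$, $\End_L(M)\leftrightarrow\Hom_A(V_1,V_2)$ in the bookkeeping. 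The notation "$+$" in \eqref{eq-isom} (rather than $\oplus$) is presumably there because the two summands can intersect, so I would not try to prove directness, only that the sum exhausts $\Hom_{L\otimes A}(M\otimes V_1,M\otimes V_2)$.
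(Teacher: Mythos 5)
Your proposal is correct and follows essentially the same route as the paper: write $\Phi=\sum_i\varphi_i\otimes\alpha_i$ (the finiteness hypothesis justifying this finite decomposition in either case), substitute $a=1$ to force every $\varphi_i$ into $\End_L(M)$, and then split the remaining relation $\sum_i\varphi_i(x\bullet m)\otimes(\alpha_i(a\bullet_1 v)-a\bullet_2\alpha_i(v))=0$ into the two regimes $\varphi_i(LM)=0$ (whence also $L\varphi_i(M)=0$ by $L$-linearity) versus $\alpha_i\in\Hom_A(V_1,V_2)$. Your closing remark about the sum not being direct matches the paper's use of $+$ rather than $\oplus$; just note that the argument after the decomposition is not symmetric in the two tensor factors (only $A$ has a unit to substitute), though this does not matter since the finiteness hypothesis is used only to obtain the decomposition itself.
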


At this point we start to use repeatedly linear-algebraic reasonings similar to
those in \cite{low} and \cite{without-unit}. This simple case concerns maps of 
one argument, and thus does not involve any symmetrization, unlike the other 
cases considered further, in subsequent sections.

\begin{proof}
Let $\Phi: M\otimes V_1 \to M \otimes V_2$ be a homomorphism of 
$L\otimes A$-modules. Because of the fi\-ni\-te-di\-men\-si\-o\-na\-li\-ty
condition,
\begin{equation*}
\Hom_K(M \otimes V_1, M \otimes V_2) \simeq \End_K(M) \otimes \Hom_K(V_1, V_2) ,
\end{equation*}
so we may write 
\begin{equation}\label{sum}
\Phi = \sum_{i\in I} \varphi_i \otimes \alpha_i
\end{equation}
for some finite set $I$, and some $\varphi_i \in \End_K(M)$, 
$\alpha_i \in \Hom_K(V_1,V_2)$. The condition of $\Phi$ to be a homomorphism of
$L\otimes A$-modules can be then written as
\begin{equation}\label{eq}
\sum_{i\in I} \varphi_i(x \bullet m) \otimes \alpha_i(a \bullet_1 v) -
\sum_{i\in I} (x \bullet \varphi_i(m)) \otimes (a \bullet_2 \alpha_i(v)) = 0
\end{equation}
for any $x\in L$, $a\in A$, $m\in M$, $v\in V_1$, where 
$\bullet_1$ and $\bullet_2$ denote the $A$-action on $V_1$ and $V_2$, 
respectively. Substitute in this equality $a=1$:
$$
\sum_{i\in I} (\varphi_i(x \bullet m)  - x \bullet \varphi_i(m)) \otimes \alpha_i(v) = 0 .
$$
Hence, we may assume that $\varphi_i(x \bullet m) = x \bullet \varphi_i(m)$ holds
for all $\varphi_i$'s and any $x\in L$, $m\in M$; in other words, each 
$\varphi_i$'s is an endomorphism of $M$ as an $L$-module. Substituting this back
to (\ref{eq}), we have:
$$
\sum_{i\in I} \varphi_i(x \bullet m) \otimes 
(\alpha_i(a \bullet_1 v) - a \bullet_2 \alpha_i(v)) = 0 .
$$
Hence there is a partition of the set of indices $I = I_1 \cup I_2$ such that
$\varphi_i(LM) = 0$ for $i\in I_1$, and 
$\alpha_i(a \bullet_1 v) = a \bullet_2 \alpha_i(v)$ for $i\in I_2$,
and the statement of the lemma follows.
\end{proof}

It seems to be difficult to derive similarly general results about 
isomorphisms between current modules over a current Lie algebra, as it is not 
clear how the condition of bijectivity of the map (\ref{sum}) could be related 
to conditions imposed on the summands $\varphi_i$'s and $\alpha_i$'s.
However, in the particular case where the number of homomorphisms between
the respective modules is rather ``small'', we can use the previous lemma to establish

\begin{theorem}\label{th}
Let $L$ be a Lie algebra having a cohomologically nontrivial finite-dimensional 
module $M$ such that either $LM = M$, or $M$ does not contain a trivial 
submodule, and $A$ an associative commutative algebra with unit having
an infinite set $\mathcal V$ of non-isomorphic unital modules of a fixed finite
dimension. Then in each of the following cases:
\begin{enumerate}
\item 
$\dim \End_L(M) = 1$,
\item
for any two non-isomorphic modules $V_1$, $V_2\in \mathcal V$, $\dim \Hom_A (V_1, V_2) \le 1$,
\end{enumerate}
the Lie algebra $L\otimes A$ has an infinite number of cohomologically 
nontrivial non-isomorphic modules of a fixed finite dimension.
\end{theorem}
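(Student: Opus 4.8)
The natural candidates are the current modules $M \otimes V$ with $V$ ranging over $\mathcal V$; they all share the finite dimension $(\dim M)(\dim V)$, which is independent of $V \in \mathcal V$. The plan is to check two things: that each $M\otimes V$ is cohomologically nontrivial, and that the assignment $V \mapsto M\otimes V$ is injective on isomorphism classes of members of $\mathcal V$. The first is immediate from Lemma \ref{embed}, which supplies an embedding $\Homol^*(L,M)\otimes V \hookrightarrow \Homol^*(L\otimes A, M\otimes V)$: since $M$ is cohomologically nontrivial, i.e. $\Homol^n(L,M)\neq 0$ for some $n\ge 1$, we get $\Homol^n(L,M)\otimes V \neq 0$ (as $V\neq 0$), hence $\Homol^n(L\otimes A, M\otimes V)\neq 0$.

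For the second point I would start from Lemma \ref{hom}, applied with $V_1,V_2\in\mathcal V$, which identifies $\Hom_{L\otimes A}(M\otimes V_1, M\otimes V_2)$, inside $\End_K(M)\otimes\Hom_K(V_1,V_2)$, with the sum of $\set{\varphi\in\End_K(M)}{\varphi(LM)=0; L\varphi(M)=0}\otimes\Hom_K(V_1,V_2)$ and $\End_L(M)\otimes\Hom_A(V_1,V_2)$. The hypothesis on $M$ is precisely what kills the first summand: if $LM=M$, then $\varphi(LM)=0$ says $\varphi(M)=0$; and if $M$ contains no trivial submodule, then $L\varphi(M)=0$ forces $\varphi(M)$ to be a trivial submodule, so again $\varphi(M)=0$. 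Thus under either alternative $\Hom_{L\otimes A}(M\otimes V_1, M\otimes V_2)\simeq \End_L(M)\otimes\Hom_A(V_1,V_2)$, via the natural identification coming from $\Hom_K(M\otimes V_1,M\otimes V_2)\simeq\End_K(M)\otimes\Hom_K(V_1,V_2)$.

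Now suppose $M\otimes V_1\simeq M\otimes V_2$ for some $V_1,V_2\in\mathcal V$; I claim $V_1\simeq V_2$ as $A$-modules, which finishes the proof since $\mathcal V$ is infinite. In case (i), $\End_L(M)=K\cdot\mathrm{id}_M$, so every $L\otimes A$-homomorphism $M\otimes V_1\to M\otimes V_2$ has the form $\mathrm{id}_M\otimes\beta$ with $\beta\in\Hom_A(V_1,V_2)$; taking one that is bijective and comparing ranks — $\mathrm{rank}(\mathrm{id}_M\otimes\beta)=(\dim M)\,\mathrm{rank}\,\beta$ must equal $(\dim M)(\dim V_1)$ — shows $\beta$ is an $A$-module isomorphism. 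In case (ii), if $V_1\not\simeq V_2$ then $\dim\Hom_A(V_1,V_2)\le 1$; it cannot be $0$, since then the whole $\Hom_{L\otimes A}$ would vanish, contradicting $M\otimes V_1\simeq M\otimes V_2$; hence $\Hom_A(V_1,V_2)=K\gamma$ for some $\gamma\neq 0$, every $L\otimes A$-homomorphism $M\otimes V_1\to M\otimes V_2$ has the form $\psi\otimes\gamma$ with $\psi\in\End_L(M)$, and a bijective one forces, by the same rank count, $\gamma$ to be an $A$-isomorphism — contradiction. Either way, $V_1\simeq V_2$.

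The only points that will need a little care are the two tacit uses of the elementary fact that a decomposable operator $\psi\otimes\gamma$ between spaces $M\otimes V_1$ and $M\otimes V_2$ of equal dimension is bijective only when both $\psi$ and $\gamma$ are, and the remark that the identification in Lemma \ref{hom} is the obvious one, so that an abstract module isomorphism is genuinely a tensor of the stated shape. I do not expect a serious obstacle: the statement is essentially a repackaging of Lemmas \ref{embed} and \ref{hom}, with hypotheses (i) and (ii) chosen exactly so that the Hom-space formula degenerates enough to extract an $A$-module isomorphism $V_1\simeq V_2$ from an $L\otimes A$-module isomorphism $M\otimes V_1\simeq M\otimes V_2$.
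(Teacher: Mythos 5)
Your proposal is correct and follows essentially the same route as the paper's proof: the modules $M\otimes V$ for $V\in\mathcal V$, cohomological nontriviality via Lemma \ref{embed}, the vanishing of the first summand in Lemma \ref{hom} under either hypothesis on $M$, and the observation that (i) or (ii) forces every homomorphism to be decomposable, so that bijectivity descends to the tensor factors. Your write-up is in fact somewhat more detailed than the paper's on the last step (the rank count and the case split between (i) and (ii)), but there is no difference in substance.
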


\begin{proof}
Consider the set of $L\otimes A$-modules of the form $M\otimes V$, where 
$V\in \mathcal V$. By Lemma \ref{embed}, all these modules are cohomologically 
nontrivial. Due to the condition imposed on $M$, the first summand in the 
right-hand side of (\ref{eq-isom}) vanishes, hence by Lemma \ref{hom}, in both 
cases (i) and (ii), each $L\otimes A$-homomorphism between two modules 
$M\otimes V_1$ and $M\otimes V_2$ can be represented as a decomposable map
$\varphi \otimes \alpha$, where $\varphi\in \End_L(M)$ and 
$\alpha \in \Hom_A(V_1,V_2)$.
Such a map is bijective (i.e. is an isomorphism of $L\otimes A$-modules)
if and only if both tensor components $\varphi$ and $\alpha$ are bijective.
Hence two $L\otimes A$-modules $M\otimes V_1$ and $M\otimes V_2$ are 
isomorphic if and only if the $A$-modules $V_1$ and $V_2$ are isomorphic, 
whence the conclusion of the theorem.
\end{proof}

Thus, to exhibit an example of a finite-dimensional Lie algebra having an 
infinite number of cohomologically nontrivial non-isomorphic modules of a fixed
finite dimension (and thus answer Question 1 from \cite{dzhu-al} in the case of
characteristic zero, and provide a counterexample to the claim of Theorem 1 
from \cite{dzhu-al} in the case of positive characteristic), it is enough to 
exhibit algebras and modules satisfying the conditions of Theorem \ref{th}.

Take, for example, a Lie algebra $L$ having a nontrivial, irreducible, 
cohomologically nontrivial module $M$: in characteristic zero, according to 
a converse to the Whitehead theorem (\cite[Theorem]{converse}), we may take
any Lie algebra which is not a direct sum of a semisimple algebra and a nilpotent 
algebra; in positive characteristic, by combination of Dzhumadil'daev's 
results -- that any finite-dimensional Lie algebra has a 
finite-dimensional cohomologically nontrivial module 
(\cite[\S 1, Corollary 2]{dzhu-sbornik}; also established independently in 
\cite[Corollary 2.2]{farn-s}), and that triviality of any irreducible 
cohomologically nontrivial module implies nilpotency 
(\cite[\S 4, Theorem]{dzhu-sbornik}) -- we may take any nonnilpotent Lie 
algebra.

Over an algebraically closed base field, the condition (i) of Theorem \ref{th}
is satisfied then due to Schur's lemma, so in this case it is enough to 
pick any finite-dimensional associative commutative algebra $A$ with an infinite
number of modules of a fixed finite dimension -- for example, any algebra of 
infinite representation type. Over an arbitrary (infinite) field, however, we exhibit
a concrete example of an algebra $A$ having an infinite number of modules
satisfying the property (ii).

\begin{example}
Let $A$ be a $3$-dimensional local algebra whose radical is 
square-zero: that is, $A$ has a basis $\{1, x, y\}$, with multiplication
$x^2 = xy = yx = y^2 = 0$. This is one of the simplest examples of algebras
of infinite representation type. Finite-dimensional indecomposable 
representations of $A$ are known: to our knowledge, they were described first
in \cite[Proposition 5]{heller-reiner}. (A more general problem about canonical 
forms of a pair of mutually annihilating nilpotent -- and not necessary 
square-zero -- matrices was solved in the famous paper \cite{gelfand-ponomarev};
see also \cite{nrsb}). From them, it is possible to pick an infinite family $\mathcal V$ satisfying the property 
(ii) of Theorem \ref{th}. This can be done in multiple ways, and one of the 
simplest is the following. 

Consider a family $V_t$ of unital $2$-dimensional $A$-modules, parametrized by 
element of the base field $t$, defined as follows:
$V_t$ has a basis $\{u,v\}$ with the following $A$-action:
$$
x \bullet u = v, \quad y \bullet u = tv, \quad x \bullet v = 0, 
\quad y \bullet v = 0 .
$$
Straightforward computation show that for different $t,s\in K$, the modules 
$V_t$ and $V_s$ are non-isomorphic, and that $\Hom_A (V_t, V_s)$ is 
$1$-dimensional, linearly spanned by the map $u \mapsto v$, $v \mapsto 0$.
\end{example}

Note that for any finite-dimensional Lie algebra of positive characteristic, 
the number of \emph{irreducible} cohomologically nontrivial finite-dimensional
modules is finite, by the same arguments used in the proof of 
\cite[Theorem 1]{dzhu-al}. In view of this, perhaps the Question 1 from 
\cite{dzhu-al} should be reformulated as follows:

\begin{question}
Do there exist finite-dimensional Lie algebras over a field of characteristic
zero having an infinite number of cohomologically nontrivial 
\textit{irreducible} non-isomorphic modules of a fixed finite dimension?
\end{question}

We conjecture that the answer to this question is negative, and a possible way
to prove this is to employ the standard ultraproduct considerations, allowing
to deduce the zero characteristic case from the cases of positive 
characteristics.

\begin{question}
What would be a ``noncommutative'' version of Theorem \ref{th}, where current
Lie algebras are replaced by Lie algebras of the form $\mathsf{sl}_n(A)$ or 
$\mathsf{gl}_n(A)$ for an associative (and not necessary commutative) algebra 
$A$?
\end{question}

We expect that in this way one may to obtain further interesting connections
between cohomology of Lie algebras and representation theory of associative 
algebras.

\section{Poisson structures on current and Kac-Moody algebras}\label{sec-poisson-current}

In this section, the characteristic of the base field $K$ is assumed to be 
as big as needed, in particular, to allow all denominators appearing in the 
formulas. Thus, in the results about general current Lie algebras below
(Theorem \ref{poisson-current}, Corollary \ref{cent-perfect}, 
Lemmas \ref{cent-current}--\ref{forms}), we assume the characteristic is 
$\ne 2,3$, while when dealing with simple Lie algebras and, in particular, 
with formulas like (\ref{jordan}), the characteristic is assumed to be zero.
 
The classical example of a vector space of functions on a manifold equipped,
from one hand, with operation of point-wise function multiplication, and, 
from another hand, with Poisson bracket, leads to the abstract notion of a 
Poisson algebra: a vector space $A$ with two algebra structures: one, Lie, 
denoted by $\liebrack$, and another, associative commutative, denoted by $\star$
with compatibility condition saying that Lie multiplication by each element is a 
derivation of a commutative algebra structure: 
\begin{equation}\label{p}
[z,x \star y] = [z,x] \star y + x \star [z,y]
\end{equation}
for any $x,y,z\in A$.

It is only natural to consider a more general situation, when condition of 
commutativity of $\star$ is dropped, arriving at so-called 
\textit{noncommutative Poisson algebras}. Such algebras were studied in many 
papers (\cite{kubo-kac-moody} is just one of them). As in \cite{kubo-kac-moody},
we adopt a ``Lie-centric'' (as opposed to ``associative-centric'') viewpoint,
according to which one fixes a Lie algebra $L$ and considers all possible 
multiplications $\star$ on the vector space $L$ such that 
$(L, \liebrack, \star)$ forms a Poisson algebra.

We go, however, a bit further, and drop associativity condition of $\star$, 
thus retaining only the compatibility condition (\ref{p}) (and, of course, 
Lieness of $\liebrack$). So, given a Lie algebra $L$, we call a \textit{Poisson structure on $L$}
any algebra structure $\star$ on the vector space $L$ such that (\ref{p}) holds.
If $\star$ satisfies additional conditions like commutativity,
associativity, etc., we will speak about commutative, associative, etc., 
Poisson structures on $L$.

Note that a Poisson structure on a Lie algebra $L$ is nothing but an $L$-module 
homomorphism from $L\otimes L$ to $L$. As such, the set of all Poisson structures on $L$ forms
a vector space under usual operations of additions and multiplication on scalar
of linear maps from $L\otimes L$ to $L$. 

There is a canonical decomposition of an $L$-module $L\otimes L$ into the 
direct sum of commutative and anti-commutative components: 
$L\otimes L = (L\wedge L) \>\oplus\> (L\vee L)$.
Accordingly, the space of Poisson structures on $L$ decomposes into the direct 
sum of two spaces, consisting of commutative and anti-commutative Poisson 
structures on $L$.

Obviously, for every Lie algebra $L$, a bilinear map $L \times L \to L$ 
proportional to the Lie multiplication, is a Poisson structure on $L$ 
(what amounts to the Jacobi identity). Of course, such Poisson structures
are not very interesting, and one wishes to consider all Poisson structures
on $L$ modulo them. A bit more generally, we may wish to eliminate
Poisson structures which can be written in the form $x\star y = \omega ([x,y])$
for some linear map $\omega: L \to L$. Then (\ref{p}) together with the Jacobi
identity implies that 
\begin{equation}\label{omega}
[z,\omega([x,y])] = \omega([z,[x,y]])
\end{equation}
for any $x,y,z\in L$. Note that this is nothing but the \textit{centroid} 
(i.e., the set of linear maps on an algebra commuting with all
multiplications) of the commutant $[L,L]$. 
Moreover, when $L$ is simple and the base field is algebraically closed, the
centroid $\Cent(L)$ of $L$ coincides with multiplications on the elements of 
the base field (\cite[Chapter 10, \S 1, Theorem 1]{jacobson}), so we arrive in 
this case at Poisson structures which are proportional to Lie multiplications 
we started with. 

Centroid also appears in a different way in our context: if $\star$ is a 
Poisson structure on a Lie algebra $L$ with a nonzero center $\Z(L)$, then 
(\ref{p}) implies that for any element $z\in \Z(L)$, the left and right 
Poisson multiplication on $z$, i.e. the maps $x \mapsto z \star x$ and 
$x \mapsto x \star z$, are elements of the centroid of $L$.

Poisson structures of type (\ref{omega}) will be called \textit{trivial},
and the quotient of the space of all Poisson structures on a Lie algebra $L$
by the trivial ones, will be denoted by $\mathscr P(L)$.
Poisson structures, whose representatives form a basis of $\mathscr P(L)$,
will be called \textit{basic Poisson structures}.
Note that trivial Poisson structures are always anti-commutative,
so any nonzero commutative Poisson structure is nontrivial.

Somewhat similarly, on a skew-symmetrization $A^{(-)}$ of an associative 
algebra $A$, one always have a commutative Poisson structure defined by the
``Jordan'' multiplication: $x \circ y = \frac{1}{2} (xy + yx)$ for $x,y\in A$. 
Though such ``Jordan'' Poisson structures are defined only for Lie algebras 
which are skew-symmetrizations of associative ones, for the Lie algebra 
$\mathsf{sl}_n(K)$ we can define a 
related Poisson structure by adding a term ``correcting'' for tracelessness:
\begin{equation}\label{jordan}
X \star Y = \frac{1}{2} (XY + YX) - \frac{1}{n} \Tr(XY)E, 
\end{equation}
where $X,Y$ are traceless matrices of order $n$, and $E$ is the identity 
matrix. It is easy to check that this is indeed a Poisson structure on 
$\mathsf{sl}_n(K)$ (what amounts to the fact that $(X,Y) \mapsto \Tr(XY)$ is a 
symmetric invariant form on $\mathsf{sl}_n(K)$), and it will be called a 
\textit{standard commutative Poisson structure}.
Note that this is no longer a Jordan structure. The case of $\mathsf{sl}_2(K)$ 
is degenerate, as multiplication defined by (\ref{jordan}) vanishes.

Our goal is to describe Poisson structures on current Lie algebras $L\otimes A$
in terms of $L$ and $A$. We do this under mild technical assumptions.
The Poissonity condition (\ref{p}) is similar to the cocycle equation, and the 
same technique used to compute low-degree cohomology of $L\otimes A$, applies.

As the centroid, essentially, is ``almost'' isomorphic to the space of trivial 
Poisson structures, we first determine the centroid of current Lie algebras. 
This is generally known from the literature, perhaps in a slightly less general
or explicit form.

\begin{lemma}\label{cent-current}
Let $L$ be a Lie algebra, $A$ an associative commutative algebra with unit,
and one of $L$, $A$ is finite-dimensional. Then 
$$
\Cent(L \otimes A) \simeq \Cent(L) \otimes A + 
\Hom_K(L/[L,L], (\Z(L) + [L,L])/[L,L]) \otimes \End_K(A).
$$
Any element of $\Cent (L\otimes A)$ can be written as the sum of decomposable 
maps $\varphi \otimes \alpha$, $\varphi\in \End_K(L)$, $\alpha\in \End_K(A)$,
of one of the following form:
\begin{enumerate}
\item 
$\varphi\in \Cent(L)$ and $\alpha = \R_u$ for some $u\in A$;
\item
$\varphi(L) \subseteq \Z(L)$ and $\varphi([L,L]) = 0$.
\end{enumerate}
\end{lemma}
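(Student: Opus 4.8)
The plan is to imitate the linear-algebraic ``separation of variables'' technique of \cite{low},\cite{without-unit}, now applied to the defining identity of the centroid rather than to a cocycle equation. Recall that an element $\Psi$ of $\Cent(L\otimes A)$ is, by definition, a linear map $\Psi\colon L\otimes A\to L\otimes A$ commuting with all adjoint operators, i.e.
\begin{equation}\label{cent-def}
\Psi([x\otimes a,\,y\otimes b]) = [x\otimes a,\,\Psi(y\otimes b)]
\end{equation}
for all $x,y\in L$, $a,b\in A$. Using the finite-dimensionality hypothesis on one of $L$, $A$, we may write $\Psi=\sum_{i\in I}\varphi_i\otimes\alpha_i$ with $\varphi_i\in\End_K(L)$, $\alpha_i\in\End_K(A)$, the $\alpha_i$ (say) linearly independent. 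Plugging this decomposition into \eqref{cent-def} and reading off the coefficients of the (linearly independent) tensor factors produces, for each $i$, the relation
\[
\varphi_i([x,y])\otimes\alpha_i(ab) \;=\; [x,\varphi_i(y)]\otimes \text{(something involving } a,b).
\]
More precisely, after a suitable rearrangement, \eqref{cent-def} becomes $\sum_i\big(\varphi_i([x,y])\otimes \alpha_i(ab)-[x,\varphi_i(y)]\otimes a\,\alpha_i(b)\big)=0$, where $a\,\alpha_i(b)=\R_a\alpha_i(b)$. First I would extract information by specializing the $A$-variables: setting $a=1$ gives $\sum_i(\varphi_i([x,y])-[x,\varphi_i(y)])\otimes\alpha_i(b)=0$, whence $\varphi_i([x,y])=[x,\varphi_i(y)]$ for every $i$. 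In particular each $\varphi_i$ maps $[L,L]$ into itself commuting with $\ad x$, and by skew-symmetry $[x,\varphi_i(y)]=[\varphi_i(x),y]$ on $[L,L]$.

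Next I would feed this back. What remains of \eqref{cent-def} is $\sum_i\varphi_i([x,y])\otimes(\alpha_i(ab)-a\,\alpha_i(b))=0$ for all $x,y,a,b$. This splits the index set: separating the $\varphi_i$ with $\varphi_i([L,L])=0$ from the rest, one obtains a partition $I=I_1\cup I_2$ with $\varphi_i([L,L])=0$ for $i\in I_1$, and $\alpha_i(ab)=a\,\alpha_i(b)$ for all $a,b$ when $i\in I_2$. The second condition, with $b=1$, forces $\alpha_i=\R_{u_i}$ for $u_i=\alpha_i(1)$; combined with $\varphi_i([x,y])=[x,\varphi_i(y)]$ on all of $L$, the pair $(\varphi_i,\R_{u_i})$ is exactly of type (i) — indeed $\varphi_i\in\Cent(L)$ since, $L$ being spanned together with a complement, the condition $\varphi_i([x,y])=[x,\varphi_i(y)]$ for all $x,y$ is the centroid condition. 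For $i\in I_1$ we have $\varphi_i([L,L])=0$ together with $\varphi_i([x,y])=[x,\varphi_i(y)]$, which gives $[x,\varphi_i(y)]=0$ for all $x,y$, i.e. $\varphi_i(L)\subseteq\Z(L)$: this is precisely type (ii). This establishes the pointwise description and simultaneously the ``$\supseteq$'' inclusion in the displayed isomorphism; the reverse inclusion is the trivial check that maps of the two listed shapes actually lie in $\Cent(L\otimes A)$, which reduces to \eqref{cent-def} and is immediate.

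For the explicit isomorphism formula, I would then reorganize the data. Type-(i) maps $\varphi\otimes\R_u$ with $\varphi\in\Cent(L)$ span a copy of $\Cent(L)\otimes A$ (via $\varphi\otimes u$). Type-(ii) maps are determined by an arbitrary $\alpha\in\End_K(A)$ together with a linear map $\varphi\colon L\to L$ with $\varphi([L,L])=0$ and $\varphi(L)\subseteq\Z(L)$; such $\varphi$ is the same as an element of $\Hom_K\!\big(L/[L,L],\,(\Z(L)+[L,L])/[L,L]\big)$ — the target must be $\Z(L)$ but modding by $[L,L]$ on both sides is harmless and yields the stated quotient. The only subtlety is the overlap between the two families (a map can be simultaneously of type (i) and (ii)), which is why the right-hand side is written as a sum rather than a direct sum; I would note that this sum is well-defined and that every element of $\Cent(L\otimes A)$ lands in it, which is all that is claimed. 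The main obstacle, as usual with this technique, is purely bookkeeping: keeping the specializations $a=1$, $b=1$ in the right order and making sure the partition $I=I_1\cup I_2$ is set up so that no cross terms survive — once that is arranged the identification with the two normal forms is forced, and matching it to the quotient-Hom description is routine linear algebra.
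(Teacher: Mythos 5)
Your proposal is correct and follows essentially the same route as the paper: decompose the centroid element as $\sum_i\varphi_i\otimes\alpha_i$ using the finite-dimensionality hypothesis, specialize $a=1$ to force each $\varphi_i$ into $\Cent(L)$, substitute back to get $\sum_i\varphi_i([x,y])\otimes(\alpha_i(ab)-a\alpha_i(b))=0$, and partition the index set into the two types, with $\alpha_i(ab)=a\alpha_i(b)$ yielding $\alpha_i=\R_{u_i}$. Your extra remarks on identifying the type-(ii) maps with $\Hom_K(L/[L,L],(\Z(L)+[L,L])/[L,L])\otimes\End_K(A)$ just make explicit what the paper leaves implicit.
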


This generalizes \cite[Lemma 5.1]{krylyuk}, and overlaps with 
\cite[Corollary 2.23]{benkart-neher} and \cite[Remark 2.19.1]{gundogan}.

\begin{proof}
Let $\Phi\in \Cent(L\otimes A)$. Due to the finite-dimensionality assumption, 
we may write $\Phi = \sum_{i\in I} \varphi_i \otimes \alpha_i$ for suitable 
linear maps $\varphi_i\in \End_K(L)$ and $\alpha_i\in \End_K(A)$. The 
centroidity condition then reads
\begin{equation}\label{cent}
\sum_{i\in I} 
\varphi_i([x,y]) \otimes \alpha_i(ab) - [x,\varphi_i(y)]\otimes a\alpha_i(b) = 0
\end{equation}
for any $x,y\in L$, $a,b\in A$. Substituting here $a=1$, we get
$$
\sum_{i\in I} (\varphi_i([x,y]) - [x,\varphi_i(y)]) \otimes \alpha_i(b) = 0 ,
$$
and hence we may assume that all $\varphi_i$'s belong to $\Cent(L)$. Taking 
this into account, (\ref{cent}) can be rewritten as
\begin{equation*}
\sum_{i\in I} \varphi_i([x,y]) \otimes (\alpha_i(ab) - a\alpha_i(b)) = 0.
\end{equation*}
Hence there is a partition of the set of indices $I = I_1 \cup I_2$ such that
$\varphi_i([x,y]) = [x,\varphi_i(y)] = 0$, $x,y\in L$ for $i\in I_1$, and
$\alpha_i(ab) = a\alpha_i(b)$, $a,b\in A$ for $i\in I_2$. 
The latter condition is equivalent to $\alpha_i(a) = au_i$ for some $u_i\in A$.
\end{proof}

If $L$ is perfect, i.e. $[L,L] = L$, then the second summand in the isomorphism
of Lemma \ref{cent-current} disappears:

\begin{corollary}\label{cent-perfect}
Let $L$ be a perfect Lie algebra, $A$ an associative commutative algebra with 
unit, and one of $L$, $A$ is finite-dimensional. Then 
$\Cent (L\otimes A) \simeq \Cent(L) \otimes A$.
\end{corollary}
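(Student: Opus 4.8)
The plan is to deduce Corollary \ref{cent-perfect} directly from Lemma \ref{cent-current} by examining what the perfectness hypothesis does to each summand in the stated isomorphism. First I would recall the isomorphism
$$
\Cent(L \otimes A) \simeq \Cent(L) \otimes A +
\Hom_K(L/[L,L], (\Z(L) + [L,L])/[L,L]) \otimes \End_K(A),
$$
and observe that the hypothesis $[L,L] = L$ forces the quotient $L/[L,L]$ to be the zero vector space. Consequently $\Hom_K(L/[L,L], (\Z(L)+[L,L])/[L,L])$ is the zero space, and the entire second summand collapses, leaving $\Cent(L \otimes A) \simeq \Cent(L) \otimes A$.

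It is worth phrasing this a touch more carefully at the level of the decomposable maps described in parts (i) and (ii) of Lemma \ref{cent-current}, since that is really where the content sits. Any element of $\Cent(L \otimes A)$ is a sum of maps $\varphi \otimes \alpha$ either of type (i), with $\varphi \in \Cent(L)$ and $\alpha = \R_u$, or of type (ii), with $\varphi(L) \subseteq \Z(L)$ and $\varphi([L,L]) = 0$. When $L$ is perfect, the condition $\varphi([L,L]) = 0$ in case (ii) reads $\varphi(L) = 0$, so $\varphi = 0$ and the corresponding summand vanishes. Thus every centroid element of $L \otimes A$ is a sum of maps of type (i), i.e. lies in $\Cent(L) \otimes A$ under the identification $\varphi \otimes \R_u \leftrightarrow \varphi \otimes u$. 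The reverse inclusion $\Cent(L) \otimes A \subseteq \Cent(L \otimes A)$ is immediate and needs no perfectness: for $\varphi \in \Cent(L)$ and $u \in A$, the map $x \otimes a \mapsto \varphi(x) \otimes ua$ manifestly commutes with all adjoint operators of $L \otimes A$, since $\varphi$ commutes with all adjoint operators of $L$ and multiplication by $u$ is central in the commutative algebra $A$.

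There is essentially no obstacle here; the corollary is a formal specialization of the lemma, and the only thing to check is the bookkeeping that the surviving piece $\Cent(L) \otimes A$ is mapped isomorphically — which is exactly the content of case (i), with the map $u \mapsto \R_u$ being an isomorphism of $A$ onto the multiplication operators because $A$ has a unit. I would present the proof in two or three sentences: invoke Lemma \ref{cent-current}, note that $[L,L] = L$ kills the $\Hom_K(L/[L,L], -)$ factor (equivalently, forces $\varphi = 0$ in case (ii)), and conclude.
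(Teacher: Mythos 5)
Your proposal is correct and follows exactly the paper's route: the corollary is stated there as an immediate specialization of Lemma \ref{cent-current}, the perfectness hypothesis $[L,L]=L$ annihilating the factor $\Hom_K(L/[L,L],\,\cdot\,)$ and hence the entire second summand. Your additional remarks on the type (ii) maps and the reverse inclusion are accurate but not needed beyond what the lemma already provides.
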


After this warm-up, let us turn to computation of Poisson structures on current
and close to them Lie algebras.
Let us call a Poisson structure $\star$ on a Lie algebra $L$ \textit{left skew}
if it satisfies the identity $[x,y] \star z = [y,z] \star x$, and \textit{right skew}
if it satisfies the identity $x \star [y,z] = y \star [z,x]$.

\begin{theorem}\label{poisson-current}
Let $L$ be a perfect Lie algebra not having nonzero left skew and right skew 
Poisson structures, $A$ an associative commutative algebra with unit, and one 
of $L$, $A$ is finite-dimensional. Then 
$\mathscr P (L\otimes A) \simeq \mathscr P(L)\otimes A$, and basic Poisson 
structures on $L\otimes A$ can be chosen as
\begin{equation}\label{maltese}
(x \otimes a) \>\bigstar\> (y \otimes b) = (x \star y ) \otimes abu ,
\end{equation}
where $x,y\in L$, $a,b\in A$, for some Poisson structure $\star$ on $L$, 
and $u\in A$.
\end{theorem}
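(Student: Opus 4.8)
The plan is to run the standard ``separation of variables'' argument, in the style of Lemma~\ref{cent-current} and the cohomology computations of \cite{low}, but now applied to the Poissonity equation~(\ref{p}) for the current Lie algebra $L\otimes A$. First I would take a Poisson structure $\bigstar$ on $L\otimes A$; by the finite-dimensionality hypothesis I may write $\bigstar = \sum_{i\in I}\star_i\otimes\mu_i$ as a finite sum, where each $\star_i$ is a bilinear map $L\times L\to L$ and each $\mu_i$ is a bilinear map $A\times A\to A$. Writing out~(\ref{p}) for $\bigstar$ and collecting terms, the condition becomes
\begin{equation*}
\sum_{i\in I}\Big([z,x\star_i y]\otimes c\,\mu_i(a,b) - [z,x]\star_i y\otimes \mu_i(ca,b) - x\star_i[z,y]\otimes\mu_i(a,cb)\Big) = 0
\end{equation*}
for all $x,y,z\in L$ and $a,b,c\in A$. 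The first normalization is to substitute $c=1$ (using that $A$ is unital): this yields $\sum_i\big([z,x\star_i y]-[z,x]\star_i y-x\star_i[z,y]\big)\otimes\mu_i(a,b)=0$, so after passing to a linearly independent spanning set of the $\mu_i$ we may assume each $\star_i$ is itself a Poisson structure on $L$. Feeding this back in, the remaining identity reads $\sum_i x\star_i y\otimes\big(c\,\mu_i(a,b)-\mu_i(ca,b)-\mu_i(a,cb)+\text{(Jacobi-type terms)}\big)$; more carefully, after the substitution the surviving relation forces, for the ``generic'' part, $\mu_i(ca,b)=ca\cdot\nu_i(b)$-type behavior, i.e. $\mu_i(a,b)=ab\,u_i$ for some $u_i\in A$.

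The subtlety — and the step I expect to be the main obstacle — is that the linear-algebraic separation does \emph{not} cleanly split $I$ into ``good'' indices after a single substitution, because the map $\bigstar$ is a map of \emph{two} arguments, so symmetrization over the two $L$-slots and the two $A$-slots must be done simultaneously. This is exactly the role of the hypotheses ``$L$ has no nonzero left skew or right skew Poisson structures.'' After normalizing so that each $\star_i\in\mathscr P(L)$ and isolating the terms, the residual equation will say that some linear combination $\sum_i(x\star_i y)\otimes\theta_i(a,b,c)$ vanishes, where $\theta_i$ measures the failure of $\mu_i$ to be of the form $ab\,u_i$; symmetrizing/antisymmetrizing in $x,y$ and in $z$ against the Jacobi identity produces terms of the shape $[x,y]\star_i z$ and $x\star_i[y,z]$, and it is precisely the absence of left/right skew Poisson structures on $L$ that lets me conclude those contributions vanish, leaving $\mu_i(a,b)=ab\,u_i$. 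Thus every Poisson structure on $L\otimes A$ is, modulo the analysis, a sum of maps $(x\otimes a)\bigstar(y\otimes b)=(x\star_i y)\otimes ab\,u_i$.

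It remains to pass from ``all Poisson structures'' to the quotient $\mathscr P(L\otimes A)$ by trivial ones, and to identify the answer with $\mathscr P(L)\otimes A$. For this I would first record that a map of the form~(\ref{maltese}) is trivial on $L\otimes A$ exactly when $\star$ is trivial on $L$: indeed if $x\star y=\omega([x,y])$ then $(x\otimes a)\bigstar(y\otimes b)=\omega([x,y])\otimes abu=\Omega([x\otimes a,y\otimes b])$ with $\Omega=\omega\otimes\R_u$, and conversely, restricting a triviality witness $\Omega\in\End_K(L\otimes A)$ to $L\otimes 1$ and using perfectness of $L$ (so $[L,L]=L$ and the centroid contribution from Lemma~\ref{cent-current} degenerates as in Corollary~\ref{cent-perfect}) recovers a triviality witness for $\star$. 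Combining this with the first part: the space of all Poisson structures on $L\otimes A$ is $(\text{all Poisson structures on }L)\otimes A$, the trivial ones correspond to $(\text{trivial on }L)\otimes A$, and quotienting gives $\mathscr P(L\otimes A)\simeq\mathscr P(L)\otimes A$, with basic representatives as in~(\ref{maltese}). The only genuinely delicate bookkeeping, as noted, is the simultaneous symmetrization in the middle step; everything else is the same routine substitution-and-partition argument already used for the centroid in Lemma~\ref{cent-current}.
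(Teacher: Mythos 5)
Your proposal follows essentially the same route as the paper's proof: the same decomposition $\sum_{i}\varphi_i\otimes\alpha_i$, the same normalization by setting $c=1$ to make each $L$-factor a Poisson structure on $L$, the same symmetrization in $x,z$ and in $y,z$ with the no-left/right-skew hypothesis forcing $\alpha_i(a,b)=ab\,u_i$, and the same identification of trivial structures with the centroid via perfectness and Corollary~\ref{cent-perfect}. The only difference is presentational: the paper writes the residual identity after resubstitution explicitly before symmetrizing, while you describe that step qualitatively, but the argument is the same.
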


\begin{proof}
Let $\Phi = \sum_{i\in I} \varphi_i \otimes \alpha_i$, where 
$\varphi_i:L \times L \to L$, $\alpha_i: A \times A \to A$ are bilinear maps,
be a Poisson structure on $L\otimes A$. Writing the Poissonity condition
(\ref{p}) for triple $x\otimes a$, $y\otimes b$, $z\otimes c$, we get:
\begin{equation}\label{yoyo1}
\sum_{i\in I} 
[z, \varphi_i(x,y)] \otimes c\alpha_i(a,b) - 
\varphi_i([z,x],y) \otimes \alpha_i(ca,b) - 
\varphi_i(x,[z,y]) \otimes \alpha_i(a,cb) = 0.
\end{equation}
Substitute here $c=1$:
$$
\sum_{i\in I} ([z, \varphi_i(x,y)] - \varphi_i([z,x],y) - \varphi_i(x,[z,y])) \otimes
\alpha_i(a,b) = 0.
$$
Hence, replacing $\varphi_i$'s by their appropriate linear combinations, we get that
each $\varphi_i$ satisfies 
$[z, \varphi_i(x,y)] = \varphi_i([z,x],y) + \varphi_i(x,[z,y])$ and hence is
a Poisson structure on $L$. Substituting this back to (\ref{yoyo1}), we get:
\begin{equation}\label{yoyo5}
\sum_{i\in I} 
\varphi_i([z,x],y) \otimes (c \alpha_i(a,b) - \alpha_i(ca,b)) + 
\varphi_i(x,[z,y]) \otimes (c \alpha_i(a,b) - \alpha_i(a,cb)) = 0.
\end{equation}
Symmetrizing this with respect to $x,z$ and $y,z$, we get respectively:
\begin{equation*}
\sum_{i\in I} 
(\varphi_i(x,[z,y]) + \varphi_i(z,[x,y]) \otimes (c \alpha_i(a,b) - \alpha_i(a,cb)) = 0
\end{equation*}
and
\begin{equation*}
\sum_{i\in I} 
(\varphi_i([z,x],y) + \varphi_i([y,x],z)) \otimes (c \alpha_i(a,b) - \alpha_i(ca,b)) = 0.
\end{equation*}
The vanishing of the first tensor factors here is equivalent for $\varphi_i$ to
be right and left skew respectively, and hence they cannot vanish for a nonzero
$\varphi_i$. Hence for each $i\in I$,  $c\alpha_i(a,b) - \alpha_i(a,cb) = 0$ and
$c\alpha_i(a,b) - \alpha_i(ca,b) = 0$, what implies 
$\alpha_i(ab) = ab\alpha_i(1)$. It is easy to check that each 
$\varphi_i \otimes \alpha_i$ for such $\alpha_i$'s is a Poisson structure on 
$L\otimes A$.

So, we see that each Poisson structure on $L\otimes A$ can be written as a sum
of Poisson structures of the form (\ref{maltese}), and hence the space
of all Poisson structures on $L\otimes A$ is isomorphic to the tensor product 
of the space of all Poisson structures on $L$ with $A$. To prove the asserted 
isomorphism, note that perfectness of $L$ implies perfectness of $L\otimes A$, 
and hence the space of trivial Poisson structures on $L\otimes A$ is isomorphic
to the centroid of $L\otimes A$, and by Corollary \ref{cent-perfect}, to 
$\Cent(L)\otimes A$. But $\Cent(L)$ is precisely the space of trivial Poisson 
structures on $L$, so factoring out the isomorphism of the spaces of Poisson 
structures established above by the spaces of trivial Poisson structures, we 
get the desired isomorphism.
\end{proof}

Though it seems that it is impossible to get a definitive result describing
Poisson structures on $L\otimes A$ in terms of $L$ and $A$ in the most general 
case, by using much more elaborate (and cumbersome) arguments along these lines,
one may significantly relax restrictions on $L$. That way, nonperfectness of 
$L$ becomes responsible for Poisson structures on $L\otimes A$ of the form 
$\psi \otimes \alpha$ with conditions
like $\psi([L,L],L) = 0$, skew Poisson structures $\varphi$ on $L$
become responsible for Poisson structures on $L\otimes A$ of the form 
$\varphi \otimes \alpha$, where $\alpha$ is composed from various generalized 
derivations on $A$, etc. 

\begin{corollary}\label{poisson-classical}
Let $\mathfrak g$ be a finite-dimensional simple Lie algebra and $A$ an 
associative commutative algebra with unit, defined over an algebraically closed
field $K$ of characteristic $0$. Then 
$\mathscr P(\mathfrak g \otimes A) \simeq A$ in the case 
$\mathfrak g = \mathsf{sl}_n(K)$, $n\ge 3$, and vanishes in all other 
cases. In the former case the basic Poisson structures can be chosen as
$$
(X \otimes a) \star (Y \otimes b) = 
\big( \frac{1}{2}(XY + YX) - \frac{1}{n}\Tr(XY)E \big)
\otimes abu ,
$$
where $X,Y\in \mathsf{sl}_n(K)$, $a,b\in A$, for some $u\in A$.
\end{corollary}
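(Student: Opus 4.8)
The plan is to deduce Corollary \ref{poisson-classical} from Theorem \ref{poisson-current} by verifying its hypotheses for $L = \mathfrak g$ simple and then computing $\mathscr P(\mathfrak g)$ explicitly. First I would check that a finite-dimensional simple Lie algebra $\mathfrak g$ over an algebraically closed field of characteristic $0$ has no nonzero left skew or right skew Poisson structures. Perfectness is automatic. For the skewness conditions, a Poisson structure $\star$ on $\mathfrak g$ is a $\mathfrak g$-module homomorphism $\mathfrak g \otimes \mathfrak g \to \mathfrak g$; decomposing $\mathfrak g \otimes \mathfrak g$ into irreducibles and using that $\mathfrak g$ appears with bounded multiplicity, one can argue that the extra identities $[x,y]\star z = [y,z]\star x$ (resp.\ $x\star[y,z] = y\star[z,x]$) force $\star$ to vanish; alternatively, a short direct argument using the Jacobi identity and perfectness shows a left or right skew Poisson structure must be trivial and hence, being anti-commutative and lying in $\Cent(\mathfrak g) = K\cdot\mathrm{id}$, must be zero (note a nonzero multiple of the bracket is not left skew). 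Once the hypotheses are verified, Theorem \ref{poisson-current} gives $\mathscr P(\mathfrak g \otimes A) \simeq \mathscr P(\mathfrak g)\otimes A$ with basic structures of the form (\ref{maltese}).

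The remaining task is to identify $\mathscr P(\mathfrak g)$ for each simple $\mathfrak g$. By the discussion preceding the theorem, when the base field is algebraically closed the trivial Poisson structures on a simple $\mathfrak g$ are exactly the scalar multiples of the Lie bracket, so $\mathscr P(\mathfrak g)$ is the space of all Poisson structures on $\mathfrak g$ modulo $K\cdot\liebrack$. A Poisson structure being a $\mathfrak g$-homomorphism $\mathfrak g\otimes\mathfrak g\to\mathfrak g$, its dimension equals the multiplicity of $\mathfrak g$ in $\mathfrak g\otimes\mathfrak g = S^2\mathfrak g \oplus \Lambda^2\mathfrak g$. The anti-commutative part $\Lambda^2\mathfrak g$ contributes the bracket itself, which is killed in the quotient (and for simple $\mathfrak g$ the multiplicity of the adjoint in $\Lambda^2\mathfrak g$ is exactly $1$). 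So $\mathscr P(\mathfrak g)$ is identified with $\Hom_{\mathfrak g}(S^2\mathfrak g, \mathfrak g)$, the space of invariant symmetric bilinear maps $\mathfrak g\times\mathfrak g\to\mathfrak g$. It is classical that this space is zero for all simple Lie algebras except type $A_{n-1}$ with $n\ge 3$, where it is one-dimensional, spanned by $X\star Y = \frac12(XY+YX) - \frac1n\Tr(XY)E$ (the traceless symmetrized product); for $\mathsf{sl}_2$ the symmetric square contains no copy of the adjoint. I would cite this, or sketch it via the fact that such a map corresponds to a nonzero element of $(\mathfrak g^*\otimes\mathfrak g^*\otimes\mathfrak g)^{\mathfrak g}$, equivalently an invariant in $S^2\mathfrak g^*\otimes\mathfrak g$, whose existence for $\mathfrak{sl}_n$ comes from the symmetric invariant tensor $d_{ijk}$ and whose uniqueness and nonexistence elsewhere follow from the structure of the ring of invariants / known decompositions of $S^2\mathfrak g$.

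Putting this together: for $\mathfrak g = \mathsf{sl}_n(K)$ with $n\ge 3$ we get $\mathscr P(\mathfrak g) \simeq K$, hence $\mathscr P(\mathfrak g\otimes A)\simeq A$, with basic Poisson structures obtained by plugging the standard commutative Poisson structure (\ref{jordan}) into (\ref{maltese}), which yields exactly the displayed formula; for all other simple $\mathfrak g$ (including $\mathsf{sl}_2$), $\mathscr P(\mathfrak g) = 0$ and therefore $\mathscr P(\mathfrak g\otimes A) = 0$.

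I expect the main obstacle to be the verification that simple Lie algebras carry no nonzero left or right skew Poisson structures, since this is the hypothesis of Theorem \ref{poisson-current} and is not quite immediate: one must rule out all $\mathfrak g$-homomorphisms $\mathfrak g\otimes\mathfrak g\to\mathfrak g$ satisfying the extra cyclic-type identity, rather than merely the trivial and standard ones. The cleanest route is probably to observe that left (resp.\ right) skewness, together with the Poissonity condition, forces the bilinear map to factor in a way incompatible with perfectness unless it is zero; a representation-theoretic count of multiplicities of the adjoint in $\mathfrak g\otimes\mathfrak g$ together with an analysis of how the skewness identity acts on the two candidate copies (the symmetric one, present only for $A_{n\ge 2}$, and the anti-symmetric one, i.e.\ the bracket) should close the gap, noting in particular that the bracket is neither left nor right skew and the symmetric product is symmetric hence cannot be skew in the required antisymmetric sense either.
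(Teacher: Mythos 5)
Your overall strategy is the paper's: verify the hypotheses of Theorem \ref{poisson-current} for simple $\mathfrak g$ and then combine $\mathscr P(\mathfrak g\otimes A)\simeq\mathscr P(\mathfrak g)\otimes A$ with the classification of invariant bilinear maps $\mathfrak g\otimes\mathfrak g\to\mathfrak g$. Your multiplicity count (the adjoint occurs once in $\Lambda^2\mathfrak g$ for every simple $\mathfrak g$, and once in $S^2\mathfrak g$ exactly in type $A_{n-1}$, $n\ge 3$) is precisely the content of the Benkart--Osborn lemma the paper cites, and your identification of $\mathscr P(\mathfrak g)$ with $\Hom_{\mathfrak g}(S^2\mathfrak g,\mathfrak g)$ and the final assembly are correct.

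The gap is in the one step that carries all the weight: showing that $\mathsf{sl}_n(K)$, $n\ge 3$, admits no nonzero left or right skew Poisson structure. Neither of your proposed justifications works as stated. The assertion that ``a short direct argument using the Jacobi identity and perfectness shows a left or right skew Poisson structure must be trivial'' is exactly what needs proof in type $A$, where a nontrivial symmetric Poisson structure exists, and no such argument is supplied. And ``the symmetric product is symmetric hence cannot be skew in the required antisymmetric sense'' is a non sequitur: left skewness is the cyclic identity $[x,y]\star z=[y,z]\star x$, not an antisymmetry of $\star$, so a commutative $\star$ could a priori satisfy it. Moreover, since the skewness condition is linear in $\star$, it is not enough to rule out the standard structure and the bracket separately; one must rule out every nonzero linear combination $\lambda(\text{standard})+\mu\liebrack$. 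The paper does this by substituting explicit matrices to extract two independent linear conditions on $(\lambda,\mu)$. A quick repair of your argument: setting $z=x$ in the left skew identity gives $[x,y]\star x=0$ for all $x,y$, and for $\star=\lambda(\text{standard})+\mu\liebrack$ one computes $[X,Y]\star X=\tfrac{\lambda}{2}[X^2,Y]-\mu\bigl(X^2Y-2XYX+YX^2\bigr)$, whose vanishing for all traceless $X,Y$ forces $\lambda=\mu=0$ once $n\ge 3$ (for $n=2$ Cayley--Hamilton makes $X^2$ central, consistent with the degeneracy there). With that step supplied, your proof closes.
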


Note that in the case $A = K$, this corollary reduces to 
\cite[Lemma 3.1]{benkart-osborn}, which can be interpreted as a description of 
Poisson structures on simple finite-dimensional Lie algebras (also rediscovered
in \cite[Lemma 8]{lecomte-roger} and \cite{kubo-late}). In fact, we base on that result. It is also 
curious to note that exactly the same dichotomy between 
$\mathsf{sl}_n(K)$, $n\ge 3$, and the other simple types occurs in the second 
and third degree (co)homology 
$\Homol_2(\mathfrak g \otimes A, \mathfrak g \otimes A)$ and 
$\Homol_3(\mathfrak g \otimes A, K)$ (see \cite[Propositions 1 and 2]{C}; also 
follows from a careful substitution of $\mathfrak g$ into the isomorphism 
displayed at p.~94 in the published version, and p.~17 in the arXiv version of 
\cite{low}).

\begin{proof}[Proof of Corollary \ref{poisson-classical}]
To be able to apply Theorem \ref{poisson-current}, we should demonstrate that 
$\mathfrak g$ does not have nonzero left and right skew Poisson structures. 
This is not difficult: consider, for example, a left skew Poisson structure 
$\star$ on $\mathfrak g$. By the above-mentioned 
\cite[Lemma 3.1]{benkart-osborn}, for the case $\mathfrak g = \mathsf{sl}_n(K)$,
$n\ge 3$, we have 
$$
X \star Y = \frac{\lambda}{2} (XY + YX) - \frac{\lambda}{n} \Tr(XY) E + 
\mu(XY - YX)
$$ 
for some $\lambda, \mu \in K$. Taking $X$ and $Z$ to be the 
block-diagonal matrices with 
$\left(\begin{matrix} 1 & 0 & 0 \\ 0 & -1 & 0 \\ 0 & 0 & 0 \end{matrix}\right)$ and
$\left(\begin{matrix} 0 & 0 & 1 \\ 0 &  0 & 0 \\ 0 & 0 & 0 \end{matrix}\right)$
in the upper-left corner, and zero everywhere else, respectively 
(these are just a semisimple element and a corresponding root vector in 
the subalgebra isomorphic to $\mathsf{sl}_3(K)$), we get 
$[X,X] \star Z = 0$ and $[X,Z] \star X = Z \star X$ is equal to the block-diagonal 
matrix with 
$\left(\begin{matrix} 0 & 0 & \frac{\lambda}{2} - \mu \\ 0 & 0 & 0 \\ 0 & 0 & 0 \end{matrix}\right)$
in the upper-left corner, whence $\lambda - 2\mu = 0$.

Taking now $X^\prime$ to be the block-diagonal matrix with
$\left(\begin{matrix} 1 & 0 & 0 \\ 0 & 1 & 0 \\ 0 & 0 & -1 \end{matrix}\right)$
in the upper-left corner, and zero everywhere else, we get
$[X^\prime,X^\prime] \star Z = 0$ and $[X^\prime,Z] \star X^\prime = 3Z \star X^\prime$ 
is equal to the block-diagonal matrix with 
$\left(\begin{matrix} 0 & 0 & -\frac{3}{2} \lambda - 9\mu \\ 0 & 0 & 0 \\ 0 & 0 & 0 \end{matrix}\right)$
in the upper-left corner, whence $\lambda + 6\mu = 0$.

Thus $\lambda = \mu = 0$ and $\star$ vanishes.

For the all other types of $\mathfrak g$, $\star$ is proportional to the Lie 
multiplication on $\mathfrak g$, so the left skewness amounts to the identity $[[x,y],z] = [[y,z],x]$, which obviously does not hold in a simple Lie algebra.

The case of right skew Poisson structures is similar. 

A consecutive application of Theorem \ref{poisson-current} and 
\cite[Lemma 3.1]{benkart-osborn} concludes the proof.
\end{proof}

In order to apply this further to Kac--Moody algebras, we need to consider 
semidirect sums of a current Lie algebra with an algebra of derivations of $A$,
and their central extensions. Consider a Lie algebra defined as the vector space
$(L \otimes A) \oplus Kz \oplus \mathcal D$, where $\mathcal D$ is a nontrivial
Lie algebra of derivations of $A$, with the following multiplication:
\begin{align}\label{eq-ext}
[x\otimes a, y\otimes b] &= [x,y] \otimes ab + \langle x, y \rangle \xi(a,b) z  \\
[x \otimes a, d] &= x \otimes d(a)\notag
\end{align}
for $x,y \in L$, $a,b \in A$, $d\in \mathcal D$, and $z$ is a central element. 
Here $\langle \cdot, \cdot \rangle$ is a nonzero symmetric bilinear 
invariant form on $L$, i.e., a symmetric bilinear map satisfying the equality
$\langle [x,y],z \rangle + \langle y,[x,z]\rangle = 0$ for any $x,y,z\in L$;
and $\xi$ is a nonzero $\mathcal D$-invariant element of 
$\HC^1(A)$, the first-degree cyclic cohomology of $A$, i.e., a skew-symmetric 
bilinear map $\xi: A \times A \to K$ satisfying the following conditions:
$\xi(ab,c) + \xi(ca,b) + \xi(bc,a) = 0$, and $\xi(d(a),b) + \xi(a,d(b)) = 0$
for any $a,b,c\in A$, $d\in \mathcal D$. Note that the former condition implies
$\xi(1,A) = 0$.

Such algebra is a nonsplit central extension of the semidirect sum 
$(L \otimes A) \inplus \mathcal D$.
For brevity, we will call algebras with multiplication (\ref{eq-ext})
\emph{extended current Lie algebras}.
Specializing this construction to the case where $K$ is an algebraically closed
field of characteristic zero, $L = \mathfrak g$, a simple finite-dimensional 
Lie algebra, $\langle \cdot, \cdot \rangle$ is the Killing form on 
$\mathfrak g$, $A = K[t,t^{-1}]$, the algebra of Laurent polynomials, 
$\mathcal D = Kt\frac{d}{dt}$, and $\xi (f,g) = Res(g \frac{df}{dt})$ for 
$f,g\in K[t,t^{-1}]$, we get non-twisted affine Kac-Moody algebras 
(see \cite[Chapter 7]{kac}).

In the next series of lemmas and corollaries to Theorem \ref{poisson-current}, 
we gradually approach to computation of Poisson structures on some class of 
extended current Lie algebras.

\begin{lemma}\label{lemma-centr}
Let $\mathcal L$ be an extended current Lie algebra such that $L$ is a perfect 
centerless Lie algebra, and one of $L$, $A$ is finite-dimensional. Then any 
element in $\Cent(\mathcal L)$ is of the form
\begin{align}\label{eq-centr-ext}
x \otimes a &\mapsto x \otimes au \notag\\
d &\mapsto ud + \varphi(d)z       \\
z &\mapsto \lambda z              \notag
\end{align}
where $x\in L$, $a\in A$, $d\in \mathcal D$, for some $u\in A$ such that
$\mathcal D(u) = 0$, a linear map $\varphi: \mathcal D \to K$ such that
$\varphi([\mathcal D,\mathcal D]) = 0$, and $\lambda\in K$ such that
$\xi(au,b) = \lambda \xi(a,b)$ for any $a,b\in A$.
\end{lemma}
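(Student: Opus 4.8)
The plan is to compute $\Cent(\mathcal L)$ by the same substitution-and-symmetrization technique used in Lemma~\ref{cent-current}, exploiting that the grading of $\mathcal L$ by the three summands $L\otimes A$, $Kz$, $\mathcal D$ constrains how a centroid element can act. First I would take an arbitrary $\Psi\in\Cent(\mathcal L)$ and record the centroidity condition $\Psi([X,Y]) = [X,\Psi(Y)] = [\Psi(X),Y]$ separately on each pair of homogeneous components. Applying this with $X = x\otimes a$, $Y = y\otimes b$ and using perfectness and centerlessness of $L$ (which make $[L\otimes A, L\otimes A]$ span $L\otimes A$ modulo the center, and kill any $\Hom_K(L/[L,L],\cdot)$-type contribution as in Lemma~\ref{cent-current}), one forces the $(L\otimes A)$-to-$(L\otimes A)$ block of $\Psi$ to have the form $x\otimes a\mapsto x\otimes au$ for a fixed $u\in A$, together with the constraint $\lambda$ relating $\xi(au,b)$ to $\xi(a,b)$ coming from the cocycle term $\langle x,y\rangle\xi(a,b)z$ in~(\ref{eq-ext}); centerlessness of $L$ also rules out an extra $z$-valued term in the image of $x\otimes a$, since pairing against the invariant form and using $[L,L]=L$ shows such a term must vanish.

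Next I would pin down the action on $z$: since $z$ is central, $[X, \Psi(z)] = \Psi([X,z]) = 0$ for all $X$, so $\Psi(z)$ lies in the center of $\mathcal L$; perfectness and centerlessness of $L$ combined with $\mathcal D\ne 0$ force $\Z(\mathcal L) = Kz$ (any central element has trivial $L\otimes A$-component by centerlessness of $L$, trivial $\mathcal D$-component by nontriviality of $\mathcal D$ acting on $A$), hence $\Psi(z) = \lambda z$ for some scalar $\lambda$. Then, testing centroidity on the bracket $[x\otimes a, d] = x\otimes d(a)$ and comparing with the already-determined block on $L\otimes A$ yields $x\otimes u\,d(a) = x\otimes d(ua)$ for all $a$, i.e. $d(ua) = u\,d(a)$, which by the Leibniz rule gives $d(u)\,a = 0$ for all $a$ and hence $\mathcal D(u)=0$ (using that $A$ has a unit). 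Finally, for $d\in\mathcal D$ I write $\Psi(d) = \sum_j y_j\otimes b_j + \varphi(d)z + d'$ with $d'\in\mathcal D$; applying centroidity to $[x\otimes a, d]$ and using the established form of $\Psi$ on $L\otimes A$ together with centerlessness of $L$ forces $\sum_j y_j\otimes b_j = 0$ and $d' = ud$, while applying it to $[d_1,d_2]$ and matching the $z$-components gives the cocycle-type identity forcing $\varphi([\mathcal D,\mathcal D])=0$; the $\lambda z$ on $z$ and the relation $\xi(au,b)=\lambda\xi(a,b)$ then fall out from comparing the $z$-terms in $[x\otimes a, y\otimes b]$.

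The main obstacle I expect is the bookkeeping of the central term: one must carefully disentangle which of the relations among $u$, $\lambda$, $\varphi$, and $\xi$ are forced and which merely have to be consistent, in particular showing that the $z$-valued ``off-diagonal'' contributions to $\Psi(x\otimes a)$ and to $\Psi(d)$ genuinely vanish (this is where centerlessness and perfectness of $L$, plus non-degeneracy of the invariant form on the relevant space, are essential) rather than surviving as extra free parameters. A secondary technical point is justifying, as in the earlier lemmas, that the finite-dimensionality hypothesis on one of $L$, $A$ lets us write $\Psi$ restricted to $L\otimes A$ as a finite sum of decomposables $\varphi_i\otimes\alpha_i$ so that the substitution $c=1$ / symmetrization steps apply verbatim; once that reduction is in place the argument runs parallel to the proof of Lemma~\ref{cent-current}.
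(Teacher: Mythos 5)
Your overall strategy is the same as the paper's: decompose a centroid element according to the three summands $(L\otimes A)\oplus Kz\oplus\mathcal D$, use perfectness and Corollary~\ref{cent-perfect} to control the $(L\otimes A)$-block, use centrality of $z$ and $\Z(\mathcal L)=Kz$ to get $z\mapsto\lambda z$, and read off $\mathcal D(u)=0$ and $\varphi([\mathcal D,\mathcal D])=0$ from the brackets $[x\otimes a,d]$ and $[d,f]$. (The paper organizes this by first passing to the quotient $\mathcal L/Kz\simeq(L\otimes A)\inplus\mathcal D$ and computing its centroid, then lifting; that is a cosmetic difference.) However, two steps in your outline are not justified by the reasons you give, and in one case the stated reason would not work.

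First, the vanishing of the $z$-valued component of $\Phi$ on $L\otimes A$ is not a consequence of centerlessness of $L$, and ``pairing against the invariant form'' does not obviously produce it. Writing $\Phi(x\otimes a)=x\otimes au+\tau(x\otimes a)z$, the centroidity condition on $[x\otimes a,y\otimes b]$ gives
$\tau([x,y]\otimes ab)=\langle x,y\rangle\bigl(\xi(au,b)-\lambda\xi(a,b)\bigr)$,
and the point is that the left-hand side is skew-symmetric under $(x,a)\leftrightarrow(y,b)$ while the right-hand side is symmetric, so both sides must vanish separately. This single symmetrization step is what simultaneously kills $\tau$ (using perfectness, so that elements $[x,y]\otimes ab$ span $L\otimes A$) and produces the constraint $\xi(au,b)=\lambda\xi(a,b)$; your outline treats these as coming from different places and supplies no mechanism for the first. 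Second, Corollary~\ref{cent-perfect} only gives the $(L\otimes A)$-block in the form $x\otimes a\mapsto\psi(x)\otimes au$ with $\psi\in\Cent(L)$, and $\Cent(L)$ need not reduce to scalars for a general perfect centerless $L$. The normalization $\psi=\mathrm{id}$ asserted in the statement comes from the compatibility with $\mathcal D$: writing out $[x\otimes a,\Phi(d)]=\Phi(x\otimes d(a))$ one gets $x\otimes\Phi(d)(a)=\psi(x)\otimes d(a)u$, whose rank-one form forces $\psi(x)\in Kx$ for all $x$ and hence $\psi$ scalar (and, as a by-product, $\Phi(\mathcal D)\subseteq\mathcal D\oplus Kz$ with $\mathcal D$-component $ud$). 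You use this bracket only to derive $\mathcal D(u)=0$, so this step is missing from your argument.
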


\begin{proof}
Let $\Phi \in \Cent(\mathcal L)$. Note the the center of $\mathcal L$ coincides
with $Kz$. It is straightforward that $\Phi$ preserves the center of 
$\mathcal L$, and hence induces the map on the quotient $\mathcal L/Kz$,
which belongs to the centroid of 
$\mathcal L/Kz \simeq (L \otimes A) \inplus \mathcal D$.

Let us determine the structure of the latter first. Let 
$\Psi\in \Cent ((L\otimes A) \inplus \mathcal D)$. Then
$$
\Psi(L\otimes A) = \Psi([L,L]\otimes A) = [L\otimes A, \Psi(L\otimes A)] 
\subseteq L\otimes A .
$$
Consequently $\Psi$, being restricted to $L\otimes A$, belongs to 
$\Cent(L\otimes A)$, and by Corollary \ref{cent-perfect}, 
$\Psi(x\otimes a) = \psi(x) \otimes au$ for some $\psi\in \Cent(L)$ and 
$u\in A$. The centroidity condition 
$[\Psi(x\otimes a),d] = \Psi([x\otimes a,d])$, for any 
$x\in L$, $a\in A$, $d\in \mathcal D$, implies that $\mathcal D(u) = 0$.
Next, the centroidity condition 
$[x\otimes a, \Psi(d)] = \Psi([x\otimes a, d])$ is equivalent to
\begin{equation}\label{yoyo9}
[x\otimes a, \Psi(d)] = \psi(x) \otimes d(a)u.
\end{equation}
Writing the $L\otimes A$-valued component of the restriction of $\Psi$ to 
$\mathcal D$ in the form $\sum_{i\in I} x_i \otimes \alpha_i$
for some $x_i\in L$ and linear maps $\alpha_i: \mathcal D\to A$, and 
substituting in (\ref{yoyo9}) $a=1$, we get
$$
\sum_{i\in I} [x,x_i] \otimes \alpha_i(d) = 0.
$$
Thus all $x_i$'s belong to the center of $L$ and hence vanish. This shows that
$\Psi(\mathcal D) \subseteq \mathcal D$, and the condition (\ref{yoyo9}) can be
rewritten as 
$$
x \otimes \Psi(d)(a) = \psi(x) \otimes d(a)u
$$
for any $x\in L$, $a\in A$, $d\in \mathcal D$. This implies 
$\psi(x) = \alpha x$ and $\Psi(d) = \alpha ud$ for some $\alpha\in K$.

Consequently, $\Cent ((L\otimes A) \inplus \mathcal D) \simeq A^{\mathcal D}$, 
and any element in that centroid is of the form 
$$
x\otimes a + d \mapsto x\otimes au + ud ,
$$ 
where $x\in L$, $a\in A$, $d\in \mathcal D$, for some $u\in A$ such that 
$\mathcal D(u) = 0$.

Returning to the algebra $\mathcal L$, we may now write $\Phi$ in the form
\begin{align*}
\Phi(x \otimes a) &= x \otimes au + \tau(x \otimes a) z  \\
\Phi(d) &= ud + \varphi(d)z                              \\
\Phi(z) &= \lambda z                                     
\end{align*}
for some $\tau\in \Hom_K(L\otimes A,K)$, $\varphi\in \Hom_K(\mathcal D,K)$,
$\lambda\in K$, and $u\in A$ as above. Then the centroidity condition
$\Phi([x\otimes a, y\otimes b]) = [\Phi(x\otimes a),y\otimes b]$ is equivalent 
to
$$
\tau([x,y]\otimes ab) = \langle x,y\rangle (\xi(au,b) - \lambda \xi(a,b))
$$
for any $x,y\in L$, $a,b\in A$. The left-hand side here is skew-symmetric in
$x,y$, while the right-hand side is symmetric, therefore they vanish separately,
what implies the vanishing of $\tau$, and the identity 
$\xi(au,b) = \lambda \xi(a,b)$.

The centroidity condition $\Phi([d,f]) = [\Phi(d),f]$ for any 
$d,f\in \mathcal D$ is equivalent to $\varphi([d,f]) = 0$.

It is easy to check that the map defined by (\ref{eq-centr-ext}) subject the
all specified conditions, indeed belongs to the centroid of $\mathcal L$.
\end{proof}

We will need also the following statement about bilinear invariant forms
on current Lie algebras.

\begin{lemma}\label{forms}
Let $L$ be a Lie algebra, $A$ an associative commutative algebra with unit,
and one of $L$, $A$ is finite-dimensional. Then each bilinear invariant form on
$L\otimes A$ can be represented as the sum
of decomposable forms $\varphi\otimes \alpha$, $\varphi: L\times L \to K$, 
$\alpha: A\times A\to K$ satisfying one of the two following conditions:
\begin{enumerate}
\item 
$\varphi ([x,y],z) = \varphi([z,x],y)$ for any $x,y,z\in L$, and 
$\alpha(a,b) = \beta(ab)$ for some linear map $\beta: A\to K$;
\item $\varphi ([L,L],L) = 0$.
\end{enumerate}
\end{lemma}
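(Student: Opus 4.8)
The plan is to follow the same linear-algebraic pattern used in Lemmas \ref{cent-current} and \ref{forms}'s predecessors, exploiting the finite-dimensionality of $L$ or $A$ to write a bilinear invariant form on $L \otimes A$ as a finite sum of decomposables and then separating the $L$-variables from the $A$-variables. Concretely, a bilinear form $B$ on $L \otimes A$ lives in $\Hom_K((L\otimes A) \otimes (L \otimes A), K) \simeq \Hom_K(L \otimes L, K) \otimes \Hom_K(A \otimes A, K)$ (here the finite-dimensionality hypothesis is what makes this identification legitimate), so I may write $B = \sum_{i \in I} \varphi_i \otimes \alpha_i$ for a finite index set $I$, bilinear forms $\varphi_i : L \times L \to K$ and $\alpha_i : A \times A \to K$, and, shrinking $I$, with the $\varphi_i$ linearly independent and the $\alpha_i$ linearly independent.

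Next I would write out the invariance condition $\langle [x \otimes a, y \otimes b], z \otimes c\rangle + \langle y \otimes b, [x \otimes a, z \otimes c]\rangle = 0$, which becomes
\begin{equation*}
\sum_{i \in I} \varphi_i([x,y],z) \otimes \alpha_i(ab,c) + \varphi_i(y,[x,z]) \otimes \alpha_i(b,ac) = 0
\end{equation*}
in $\Hom_K(A^{\otimes 3}, K)$-valued notation, for all $x,y,z \in L$ and $a,b,c \in A$. The standard trick is to substitute $a = 1$, giving $\sum_i (\varphi_i([x,y],z) + \varphi_i(y,[x,z])) \otimes \alpha_i(b,c) = 0$; since the $\alpha_i$ are linearly independent this forces each $\varphi_i$ to be a bilinear invariant form on $L$, i.e. $\varphi_i([x,y],z) = -\varphi_i(y,[x,z])$ for all $x,y,z$. (I should be slightly careful about whether the $\alpha_i(b,c)$, as $b,c$ range over $A$, are themselves "independent enough" — but with the $\alpha_i$ linearly independent as elements of $\Hom_K(A\otimes A, K)$, the relation $\sum_i c_i \alpha_i = 0$ in that space forces $c_i = 0$, which is exactly what I need.) Feeding this back in and using invariance of each $\varphi_i$ to rewrite $\varphi_i(y,[x,z]) = -\varphi_i([x,y],z) = \varphi_i([y,x],z)$, the cyclic symmetry $\varphi_i([x,y],z) = \varphi_i([z,x],y)$ that appears in condition (i) is what one extracts, and the remaining identity will read $\sum_i \varphi_i([x,y],z) \otimes (\alpha_i(ab,c) - \alpha_i(ac,b)) = 0$ (after relabeling variables and using the cyclic symmetry of $\varphi_i$ together with symmetry of the form). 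Then, just as in the proof of Lemma \ref{cent-current}, I partition $I = I_1 \cup I_2$: for $i \in I_1$ the form $\varphi_i$ satisfies $\varphi_i([L,L],L) = 0$ (this is case (ii)), and for $i \in I_2$ the remaining condition on $\alpha_i$ — symmetry together with $\alpha_i(ab,c) = \alpha_i(ac,b)$, i.e. $\alpha_i(ab,c)$ depends only on the product $abc$ — forces $\alpha_i(a,b) = \beta_i(ab)$ for the linear map $\beta_i : u \mapsto \alpha_i(u,1)$, which is case (i).

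The main obstacle I anticipate is the bookkeeping in the symmetrization step: I have to correctly use the symmetry of a bilinear invariant form ($\varphi$ symmetric is automatic once it is invariant on a perfect algebra, but in general I only get the "cyclic" consequence, and here $L$ is not assumed perfect), decide exactly which symmetrizations of the three-variable identity to take, and verify that the algebra really does split cleanly into the two advertised cases without a leftover mixed term. A secondary point to check is the converse — that any $\varphi \otimes \alpha$ of the form (i) or (ii) genuinely is a bilinear invariant form on $L \otimes A$ — but this is a direct substitution: for type (ii) the bracket $[x\otimes a, y \otimes b] = [x,y]\otimes ab$ always has its $L$-component in $[L,L]$, so $\varphi([L,L],L) = 0$ kills every term, and for type (i) the identity $\beta(ab\cdot c) = \beta(a \cdot bc)$ together with invariance and cyclicity of $\varphi$ does the job. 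I would close by remarking, as the paper does elsewhere, that this is the same separation-of-variables mechanism, now applied to the "invariant form" equation rather than the cocycle or Poissonity equations.
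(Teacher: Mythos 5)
Your overall strategy --- write $B=\sum_{i}\varphi_i\otimes\alpha_i$ using the finite-dimensionality hypothesis, impose the invariance identity, substitute $a=1$ to force each $\varphi_i$ to be an invariant bilinear form on $L$, then substitute back and partition the index set --- is exactly the separation-of-variables scheme the paper has in mind: its own ``proof'' simply points to the argument of Theorem \ref{poisson-current} and to \cite[Theorem 2]{without-unit}. The decomposition step, the $a=1$ substitution, and the partition into the indices with $\varphi_i([L,L],L)=0$ versus those with $\alpha_i(ab,c)=\alpha_i(b,ac)$ (whence $\alpha_i(a,b)=\beta_i(ab)$ upon setting $b=1$) are all correct. (A cosmetic slip: after back-substitution the second tensor factor is $\alpha_i(ab,c)-\alpha_i(b,ac)$, not $\alpha_i(ab,c)-\alpha_i(ac,b)$; either version yields the same conclusion about $\alpha_i$.)

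The genuine gap is at the very end, precisely where you flagged your own uncertainty. You claim that the chain $\varphi_i(y,[x,z])=-\varphi_i([x,y],z)=\varphi_i([y,x],z)$ ``extracts'' the cyclic identity $\varphi_i([x,y],z)=\varphi_i([z,x],y)$ of condition (i). It does not: invariance gives $\varphi_i([x,y],z)=\varphi_i(y,[z,x])$, while (i) asks for $\varphi_i([z,x],y)$, and these agree only when $\varphi_i$ is symmetric on $[L,L]\times L$ --- which, as you note, is not automatic since $L$ is not assumed perfect. The missing step (this is exactly the role of the ``symmetricity \dots used at the very end'' that the paper alludes to when citing \cite{without-unit}) is short: for $i\in I_2$ split $\varphi_i$ into its symmetric and skew-symmetric parts; invariance is stable under transposition, since $\varphi([x,z],y)=-\varphi(z,[x,y])$, so both parts are again invariant. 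The symmetric invariant part satisfies (i). The skew-symmetric invariant part automatically satisfies (ii): for such a form the trilinear map $T(x,y,z)=\varphi([x,y],z)$ is skew in $(x,y)$ and, by invariance plus skewness, symmetric in $(y,z)$, hence $T=-T$ and $T=0$ in characteristic $\ne 2$ (which is assumed in this section). Absorbing these skew pieces into the type-(ii) summands completes the proof; without this step your argument only establishes the lemma with ``invariant'' in place of the stated cyclic condition in (i).
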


\begin{proof}
The proof is absolutely similar to those of Theorem \ref{poisson-current}, and, 
in fact, is given in \cite[Theorem 2]{without-unit}. The condition of symmetricity of bilinear
forms used there at the very end and does not affect the main body of the proof.
Specializing to the case where $A$ contains a unit, we get the statement of
the lemma.
\end{proof}

\begin{corollary}\label{cor-yoyo}
Let $A$ be an associative commutative algebra with unit, and $\mathcal D$ an 
abelian Lie algebra of derivations of $A$. Then
$$
\mathscr P((\mathfrak g\otimes A) \inplus \mathcal D) \simeq 
\begin{cases} 
U(\mathcal D,A) \oplus U(\mathcal D,A) \oplus 
\set{u\in A}{\mathcal D(u) = \mathcal D(A)u = 0}, \\
              \qquad\qquad\qquad\qquad\qquad\qquad\qquad\mathfrak g = \mathsf{sl}_n(K), n\ge 3 \\
U(\mathcal D,A) \oplus U(\mathcal D,A), \text{ otherwise},
\end{cases}
$$
where 
$U(\mathcal D,A) = 
\set{\beta\in \Hom_K(\mathcal D,A)}{\mathcal D(\beta(\mathcal D)) = 0}$.
The basic Poisson structures can be chosen as
follows (assuming $x,y\in \mathfrak g$, $a,b\in A$, $d,d^\prime\in \mathcal D$):
\begin{enumerate}
\item 
$(x\otimes a) \star (y\otimes b) = (\frac 12 (xy + yx) - \frac 1n \Tr(xy)E) \otimes abu$,
where $u\in A$ such that $\mathcal D(u) = 0$ and $\mathcal D(A)u = 0$;

\noindent
$\mathcal D \star (\mathfrak g\otimes A) 
= (\mathfrak g\otimes A) \star \mathcal D = \mathcal D \star \mathcal D = 0$; 

\noindent 
these Poisson structures exist in the case $\mathfrak g = \mathsf{sl}_n(K)$, 
$n\ge 3$ only; 

\item
$d \star (x\otimes a) = x\otimes a\beta(d)$, 
$d \star d^\prime = \beta(d)d^\prime$, where $\beta\in U(\mathcal D,A)$; 

\noindent
$(\mathfrak g\otimes A) \star (\mathfrak g\otimes A) 
= (\mathfrak g\otimes A) \star \mathcal D = 0$;

\item
$(x\otimes a) \star d = x\otimes a\gamma(d)$, 
$d \star d^\prime = \gamma(d^\prime)d$, where $\gamma\in U(\mathcal D,A)$;

\noindent
$(\mathfrak g\otimes A) \star (\mathfrak g\otimes A) 
= \mathcal D \star (\mathfrak g\otimes A) = 0$.
\end{enumerate}
\end{corollary}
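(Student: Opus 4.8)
The plan is to follow the proof of Theorem~\ref{poisson-current}. Write $\mathcal L = (\mathfrak g \otimes A) \inplus \mathcal D$ and $G = \mathfrak g \otimes A$. Since $\mathfrak g$ is simple and $A$ is unital, $G$ is a perfect ideal with $[\mathcal L,\mathcal L] = G$, and $\mathcal D$ is an abelian subalgebra with $[\mathcal D, G] \subseteq G$; hence the $\mathcal L$-module $\mathcal L \otimes \mathcal L$ is the direct sum of the four submodules $G\otimes G$, $G\otimes\mathcal D$, $\mathcal D\otimes G$, $\mathcal D\otimes\mathcal D$. A Poisson structure $\star$ on $\mathcal L$ is an $\mathcal L$-module map $\mathcal L\otimes\mathcal L\to\mathcal L$; I would split it into these four components, compose each with the projections $\pi_G\colon\mathcal L\to G$ and $\pi_{\mathcal D}\colon\mathcal L\to\mathcal D$, determine the pieces by writing the Poissonity condition~\eqref{p} for triples realizing all distributions of the three arguments between $G$ and $\mathcal D$, and finally factor out the trivial Poisson structures.

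First I would handle the $\mathcal D$-valued components. Because $[\mathcal L,\mathcal L] = G = \ker\pi_{\mathcal D}$, projecting~\eqref{p} onto $\mathcal D$ shows that $\pi_{\mathcal D}\circ\star$ is an invariant $\mathcal D$-valued bilinear form on $\mathcal L$; restricted to $G\times G$ it is an invariant form on $\mathfrak g\otimes A$ valued in a trivial module, so by Lemma~\ref{forms} (its alternative $\varphi([L,L],L)=0$ being vacuous for simple $\mathfrak g$) and the fact that the invariant bilinear forms on $\mathfrak g$ are the scalar multiples of the Killing form $\langle\cdot,\cdot\rangle$, it equals $(x\otimes a,y\otimes b)\mapsto\langle x,y\rangle\,\tau(ab)$ for some $\tau\in\Hom_K(A,\mathcal D)$. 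Meanwhile $\pi_G\circ\star$ restricted to $G\times G$ is $\mathfrak g\otimes 1$-equivariant (set $c=1$ in~\eqref{p}, using that a derivation kills the unit), hence decomposes, by simplicity of $\mathfrak g$ (as in \cite[Lemma~3.1]{benkart-osborn}), as $[\cdot,\cdot]\otimes g_1$ plus, only for $\mathfrak g=\mathsf{sl}_n(K)$ with $n\ge 3$, a Jordan-type term $\circ\otimes g_2$. Substituting both pieces into~\eqref{p} for $x,y,z\in\mathfrak g$ chosen so that $\langle x,y\rangle=0$ while the relevant iterated brackets do not vanish forces the coefficients $g_1,g_2$ to be ``multiplicative'' — whence the $G$-valued component on $G\times G$ is a standard commutative structure $(x\otimes a)\star(y\otimes b)=(\frac12(xy+yx)-\frac1n\Tr(xy)E)\otimes abu$ plus a trivial structure $u_1\,[\cdot,\cdot]$, $u_1\in A$ — and also forces $\tau=0$. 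Once $\tau=0$, the invariance relation for $\pi_{\mathcal D}\circ\star$ together with $[G,G]=G$ and $[G,\mathcal D]\subseteq G$ kills the $\mathcal D$-valued components on $G\times\mathcal D$ and $\mathcal D\times G$ as well, leaving only the one on $\mathcal D\times\mathcal D$.

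It remains to treat the $G$-valued mixed components and then $\mathcal D\times\mathcal D$. By simplicity of $\mathfrak g$ the maps $\pi_G\circ\star$ on $G\times\mathcal D$ and on $\mathcal D\times G$ reduce to linear maps $A\otimes\mathcal D\to A$ in the $A$-variable; substituting $1$ for an $A$-argument and symmetrizing exactly as in Theorem~\ref{poisson-current} shows they are $(x\otimes a)\star d=x\otimes a\gamma(d)$ and $d\star(x\otimes a)=x\otimes a\beta(d)$ plus the contribution $u_1[\cdot,\cdot]$ makes to those blocks, and running~\eqref{p} with a derivation among the arguments produces the side conditions $\mathcal D(\beta(\mathcal D))=\mathcal D(\gamma(\mathcal D))=0$ (i.e.\ $\beta,\gamma\in U(\mathcal D,A)$), $\mathcal D(u_1)=0$, and — from triples straddling $G$ and $\mathcal D$ — $\mathcal D(u)=\mathcal D(A)u=0$. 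Writing~\eqref{p} for the remaining mixed triples and for three arguments in $\mathcal D$ forces the $\mathcal D\times\mathcal D$ component to be $d\star d'=\beta(d)d'+\gamma(d')d$ with vanishing $G$-valued part, so $\star$ is the sum of a structure of type~(i), one of type~(ii), one of type~(iii), and a trivial one.

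Finally, by Corollary~\ref{cent-perfect} and the centroid computation inside the proof of Lemma~\ref{lemma-centr}, the trivial Poisson structures on $\mathcal L$ are precisely the maps $[\cdot,\cdot]\mapsto$ multiplication by an element $u_1\in A^{\mathcal D}$; being anti-commutative they meet the commutative type-(i) structures only in $0$, and since the $u_1$-terms drop out of all the constraints above, they meet families~(ii) and~(iii) only in $0$ too. Hence modulo trivial structures the space of Poisson structures on $\mathcal L$ is the internal direct sum of the three families, with parameters ranging over $\{u\in A:\mathcal D(u)=\mathcal D(A)u=0\}$ (for $\mathfrak g=\mathsf{sl}_n(K)$, $n\ge3$, only), $U(\mathcal D,A)$, and $U(\mathcal D,A)$ — which is the asserted isomorphism, with the listed basic structures. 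The hard part is the second paragraph: keeping the couplings between the four blocks under control long enough to conclude that the $\mathcal D$-valued component on $G\times G$ vanishes. No single step there is deep — each is an instance of the ``separation of variables'' technique combined with a judicious choice of test elements in $\mathfrak g$ — but carrying all the conditions simultaneously, and then checking at the end that the surviving families together with the trivial structures are both necessary and sufficient and carry exactly the stated side conditions, is the laborious part.
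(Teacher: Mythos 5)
Your proposal follows essentially the same route as the paper: split the structure into the four blocks determined by $\mathfrak g\otimes A$ and $\mathcal D$, identify the $\mathcal D$-valued part on $(\mathfrak g\otimes A)\times(\mathfrak g\otimes A)$ as an invariant form via Lemma~\ref{forms}, run the separation-of-variables argument of Theorem~\ref{poisson-current} to reduce that block to Corollary~\ref{poisson-classical}, use the centroid of $\mathfrak g$ to collapse the mixed blocks to maps $\mathcal D\times A\to A$, and finally quotient by the trivial structures (which indeed are exactly the terms carrying the extra parameter $v=u_1$). One correction to the mechanics: triples with $\langle x,y\rangle=0$ make the $\tau$-term drop out of the Poissonity identity and therefore give no information about $\tau$; the substitution that actually kills $\tau$ is the diagonal one $x=y=z$ with $\langle x,x\rangle\neq 0$, for which the Lie-algebra side of the symmetrized identity vanishes identically while the right-hand side is $\langle x,x\rangle\,x\otimes\tau(ab)(c)$ --- this is the step the paper uses, and with it your outline goes through.
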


\begin{proof}
Each Poisson structure on $(\mathfrak g\otimes A) \inplus \mathcal D$, being 
restricted to $(\mathfrak g\otimes A) \times (\mathfrak g\otimes A)$, can be 
decomposed into the sum of two maps $\Phi$ and $\Psi$ with values in 
$\mathfrak g\otimes A$ and $\mathcal D$, respectively. Writing the Poissonity condition for elements of $\mathfrak g\otimes A$,
we get
\begin{multline}\label{phi-psi}
[z\otimes c, \Phi(x\otimes a, y\otimes b)] + [z\otimes c, \Psi(x\otimes a, y\otimes b)]
\\ 
= \Phi ([z,x]\otimes ca, y\otimes b) + \Phi (x, [z,y]\otimes ca)
\end{multline}
and
\begin{equation*}
\Psi ([z,x]\otimes ca, y\otimes b) + \Psi (x, [z,y]\otimes ca) = 0
\end{equation*}
for any $x,y,z\in \mathfrak g$, $a,b,c,\in A$.
Hence, $\Psi$ is a $\mathcal D$-valued invariant bilinear form on 
$\mathfrak g\otimes A$. Now apply Lemma \ref{forms}. As $\mathfrak g$ is 
perfect, forms of type (ii) vanish. Bilinear maps on $\mathfrak g$ satisfying 
the condition in (i) can be decomposed into the sum of symmetric and 
skew-symmetric ones. Symmetric maps coincide with symmetric invariant forms on 
$\mathfrak g$ and hence are proportional to the Killing form 
$\langle \cdot,\cdot \rangle$, and skew-symmetric maps vanish by 
\cite[Lemma 2]{without-unit}. Hence $\Psi$ can be written in the form 
$\Psi (x\otimes a, y\otimes b) = \langle x, y \rangle \beta (ab)$ for some
linear map $\beta: A \to \mathcal D$.

Now we may proceed as in the proof of Theorem \ref{poisson-current}.
Writing $\Phi$ in the form $\sum_{i\in I}\varphi_i \otimes \alpha_i$ and 
substituting this into (\ref{phi-psi}), we get the equality (\ref{yoyo1}) with
\begin{equation}\label{term}
\langle x, y \rangle z\otimes \beta (ab)(c)
\end{equation}
instead of $0$ at the right-hand side. This expression vanishes under the
substitution $c=1$, and we may proceed exactly as in the
 proof of Theorem \ref{poisson-current} to arrive at the equality (\ref{yoyo5}) with
the same term (\ref{term}) instead of $0$ at the right-hand side. Setting in 
the latter $x = y = z$, we get the vanishing left-hand side, and 
$\langle x, x \rangle x\otimes \beta (ab)(c)$ at the right-hand side. Now 
picking $x\in \mathfrak g$ such that $\langle x, x \rangle \ne 0$, we get
$\beta = 0$. Hence $\Psi = 0$, $\Phi$ is a Poisson structure on 
$\mathfrak g \otimes A$, and by Corollary \ref{poisson-classical},
$$
\Phi (x\otimes a, y\otimes b) = (x \star y) \otimes abu + [x,y]\otimes abv
$$
in the case of $\mathfrak g = \mathsf{sl}_n(K)$, $n \ge 3$, where $\star$ is 
the standard commutative Poisson structure on $\mathsf{sl}_n(K)$, and 
$u,v\in A$, and $\Phi (x\otimes a, y\otimes b) = [x,y]\otimes abv$ for the all
other types of $\mathfrak g$.

Now writing the Poissonity condition for triple $y\otimes b, d, x\otimes a$, 
where $x,y\in \mathfrak g$, $a,b\in A$, $d\in \mathcal D$, we get that 
$d(u) = d(v) = 0$ for any $d\in \mathcal D$.

Now denoting, by abuse of notation, the whole Poisson structure on 
$(\mathfrak g\otimes A) \inplus \mathcal D$ by the same letter $\Phi$, let us 
see what happens when one of the arguments of $\Phi$ lies in $\mathcal D$. 
Writing the Poissonity condition for triple $y\otimes b, d, x\otimes a$, we get
\begin{equation}\label{yoyo7}
[y\otimes b, \Phi (d, x\otimes a)] = \Phi (y\otimes d(b), x\otimes a) + 
\Phi (d, [y,x]\otimes ba) ,
\end{equation}
what implies 
$\Phi (\mathcal D, \mathfrak g\otimes A) 
= \Phi (\mathcal D, [\mathfrak g, \mathfrak g] \otimes A)
\subseteq \mathfrak g\otimes A$. We may write
$$
\Phi (d, x\otimes a) = \sum_{i\in I} \varphi_i(x) \otimes \alpha_i(d,a)
$$
for suitable (bi)linear maps $\varphi_i: \mathfrak g \to \mathfrak g$ and 
$\alpha_i: \mathcal D \times A \to A$. Substituting this into (\ref{yoyo7}) and
setting there $b=1$, we get
\begin{equation*}
\sum_{i\in I} (\varphi_i([y,x]) - [y,\varphi_i(x)]) \otimes \alpha_i(d,a) = 0.
\end{equation*}
Thus we may assume that $\varphi_i([y,x]) - [y,\varphi_i(x)] = 0$ for each $i\in I$,
i.e. each $\varphi_i$ belongs to the centroid of $\mathfrak g$ and hence is a 
multiplication by an element of the base field. But then we have 
$\Phi (d, x\otimes a) = x\otimes \alpha(d,a)$ for a suitable bilinear map
$\mathcal D \times A \to A$ and all $x\in \mathfrak g$, $a\in A$, 
$d\in \mathcal D$, and the Poissonity condition (\ref{yoyo7}) reduces to
$$
[y,x] \otimes (b\alpha(d,a) - \alpha(d,ba)) = \Phi(y\otimes d(b), x\otimes a)
$$
for any $x,y\in \mathfrak g$, $a,b\in A$, $d\in \mathcal D$.
If $\mathfrak g = \mathsf{sl}_n(K)$, $n \ge 3$, then setting in this equality 
$x=y$, we get $(x \star x) \otimes d(b)au = 0$, and picking $x$ such that 
$x \star x \ne 0$ (for example, $x = \diag (1,1,-2,0, \dots, 0)$), we get that 
$\mathcal D(A)u = 0$, and the whole equality reduces to
\begin{equation}\label{yoyo8}
b\alpha(d,a) - \alpha(d,ba) = ad(b)v.
\end{equation}

If $\mathfrak g$ is different from $\mathsf{sl}_n(K)$, $n \ge 3$, then the term
with the standard commutative Poisson structure is absent in $\Phi$, and we also
get the equality (\ref{yoyo8}). Setting in the latter equality $a=1$, we see
that it is equivalent to the condition $\alpha(d,a) = a\beta(d) - d(a)v$
for some linear map $\beta: \mathcal D \to A$. The Poissonity condition for 
triple $d, d^\prime, x\otimes a$ for $x\in \mathfrak g$, $a\in A$, 
$d,d^\prime \in \mathcal D$, together with the condition $\mathcal D(v) = 0$ and
abelianity of $\mathcal D$, implies then that 
$\mathcal D(\beta(\mathcal D)) = 0$.

Quite similarly, we have that 
$$
\Phi (x\otimes a, d) = a\gamma(d) + d(a)v
$$ 
for any $x\in \mathfrak g$, $a \in A$, $d\in \mathcal D$, and a suitable linear
map $\gamma: \mathcal D \to A$ such that $\mathcal D(\gamma(\mathcal D)) = 0$.

Finally, the Poissonity condition written for three elements of $\mathcal D$ 
implies that $[\mathcal D,\Phi(\mathcal D, \mathcal D)] = 0$ and hence 
$\Phi(\mathcal D, \mathcal D) \subseteq \mathcal D$, and the Poissonity 
condition for triple $x\otimes a, d, d^\prime$ implies 
$$
\Phi (d, d^\prime)(a) = d(a)\gamma(d^\prime) + d^\prime(a)\beta(d)
$$
for any $a\in A, d,d^\prime\in \mathcal D$.

Summarizing all this together, it is easy to single out the trivial Poisson structures:
those are precisely the maps involving the element $v$. Writing the generic form
of the Poisson structure modulo these maps, we get the desired statement.
\end{proof}

\begin{corollary}\label{cor-yoyo1}
Let $\mathfrak L$ be an extended current Lie algebra over an algebraically 
closed base field $K$ of characteristic zero, $L$ is simple finite-dimensional 
Lie algebra, $\mathcal D = Kd$ is one-dimensional, $\dim A > 2$, and 
$A^{\mathcal D} = K$. Then the basic Poisson structures on $\mathcal L$ can be 
chosen from the following list:
\begin{enumerate}
\item 
$(x \otimes a) \star (y \otimes b) = \langle x,y \rangle f(ab) z$,

\noindent
$z \star d = -d \star z = z$,

\noindent
$(\mathfrak g \otimes A) \star d = d \star (\mathfrak g \otimes A) = 
d \star d = 0$,

\noindent
where $f:A\to K$ is a linear map such that $f(ad(b)) = \xi(a,b)$ for any 
$a,b\in A$;

\item
$(x \otimes a) \star (y \otimes b) = \langle x,y \rangle f(ab) z$,

\noindent
$d \star \mathcal L = \mathcal L \star d = 0$,

\noindent
where $f:A\to K$ is a linear map such that $f(ad(b)) = 0$ for any $a,b\in A$;

\item
$d \star (x\otimes a) = x\otimes a$,

\noindent
$d \star d = d$,

\noindent
$(\mathfrak g \otimes A) \star (\mathfrak g \otimes A) 
= (\mathfrak g \otimes A) \star d = z \star d = d \star z = 0$;

\item
$(x\otimes a) \star d = x\otimes a$,

\noindent
$d \star d = d$,

\noindent
$(\mathfrak g \otimes A) \star (\mathfrak g \otimes A) 
= d \star (\mathfrak g \otimes A) = z \star d = d \star z = 0$;

\item
$d \star d = z$,

\noindent
$(\mathfrak g \otimes A) \star \mathcal L 
= \mathcal L \star (\mathfrak g \otimes A) = z \star d = d \star z = 0$.

\end{enumerate}
Additionally, for all of them 
$(\mathfrak g \otimes A) \star z = z \star (\mathfrak g \otimes A) = z \star z 
= 0$.
\end{corollary}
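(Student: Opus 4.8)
The plan is to run the same linear-algebra machine as in the proof of Corollary~\ref{cor-yoyo}, the only new ingredient being the central element $z$ and the $2$-cocycle term $\langle x,y\rangle\xi(a,b)z$ in (\ref{eq-ext}). Throughout, a Poisson structure $\Phi$ on $\mathcal L$ is studied by restricting its two arguments to the subspaces $\mathfrak g\otimes A$, $\mathcal D=Kd$ and $Kz$, and decomposing the values into the corresponding direct-sum components. The first observation is that, $z$ being central, the maps $v\mapsto\Phi(z,v)$ and $v\mapsto\Phi(v,z)$ lie in $\Cent(\mathcal L)$; by Lemma~\ref{lemma-centr} together with the hypothesis $A^{\mathcal D}=K$ (which forces the element ``$u$'' of that lemma to be a scalar, so that $\lambda=u$ and the condition on $\varphi$ is vacuous), each of them acts as $x\otimes a\mapsto\lambda(x\otimes a)$, $d\mapsto\lambda d+\mu z$, $z\mapsto\lambda z$ for scalars $\lambda,\mu$. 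Crucially, since $\xi\neq 0$ one has $z\in[\mathfrak g\otimes A,\mathfrak g\otimes A]$ (e.g.\ $[x\otimes a,x\otimes b]=\langle x,x\rangle\xi(a,b)z$), so writing $z=\sum_i[u_i,v_i]$ with $u_i,v_i\in\mathfrak g\otimes A$ and using the Poissonity identity $\Phi([u,v],w)=[u,\Phi(v,w)]-\Phi(v,[u,w])$ shows that these two centroid elements are in fact \emph{determined} by the restriction of $\Phi$ to $(\mathfrak g\otimes A)\times(\mathfrak g\otimes A)$; this is what will eventually force $\lambda=0$ (hence $z\star z=0$ and the vanishing of all products of $z$ with $\mathfrak g\otimes A$) and pin down $z\star d$ in terms of $f$.

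Next I would analyse $\Phi$ on $(\mathfrak g\otimes A)\times(\mathfrak g\otimes A)$, writing it as $\Phi_0+\Phi_1+\Phi_2$ with values in $\mathfrak g\otimes A$, $Kz$ and $\mathcal D$. Splitting the Poissonity condition for a triple of elements of $\mathfrak g\otimes A$ into components: the $\mathcal D$-component says $\Phi_2$ is a $\mathcal D$-valued invariant bilinear form on $\mathfrak g\otimes A$ (the $z$-part of the bracket contributes nothing here), so Lemma~\ref{forms}, perfectness and simplicity of $\mathfrak g$, and the vanishing of skew-symmetric invariant forms on $\mathfrak g$ give $\Phi_2(x\otimes a,y\otimes b)=\langle x,y\rangle\beta(ab)$ for a linear $\beta\colon A\to\mathcal D$; equating the three $\mathfrak g$-arguments and picking $x$ with $\langle x,x\rangle\neq 0$ forces $\beta=0$, exactly as in Corollary~\ref{cor-yoyo}. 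The $\mathfrak g\otimes A$-component is handled by Corollary~\ref{poisson-classical} (the extra $z$-contributions disappearing under the substitutions used there): $\Phi_0(x\otimes a,y\otimes b)=(x\star y)\otimes abu+[x,y]\otimes abv$, and the Poissonity condition with a $\mathcal D$-argument gives $d(u)=d(v)=0$, i.e.\ $u,v\in K$; for the $\mathsf{sl}_n$-term one moreover gets $\mathcal D(A)u=0$ as in Corollary~\ref{cor-yoyo}, which is impossible for $A^{\mathcal D}=K$ with $\mathcal D$ acting nontrivially, so $u=0$. Subtracting the trivial Poisson structure $v\,[\cdot,\cdot]$ we may assume $v=0$, hence $\Phi_0=0$ on $(\mathfrak g\otimes A)^2$; the $Kz$-component equation then loses its coupling to $\Phi_0$ and simply says $\Phi_1$ is a $K$-valued invariant bilinear form on $\mathfrak g\otimes A$, so $\Phi_1(x\otimes a,y\otimes b)=\langle x,y\rangle f(ab)$ for a linear $f\colon A\to K$.

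It then remains to determine $f$ and to treat the products in which an argument lies in $\mathcal D$ or $Kz$. For a $\mathcal D$-argument I would repeat the computation of Corollary~\ref{cor-yoyo}: since $\Cent(\mathfrak g)$ reduces to scalars, one gets $\Phi(d,x\otimes a)=x\otimes(a\beta(d)-d(a)v)$ and $\Phi(x\otimes a,d)=x\otimes(a\gamma(d)+d(a)v)$ with $\beta,\gamma\in U(\mathcal D,A)\cong K$, while $\Phi(d,d)$ equals $\kappa d$ (for a scalar $\kappa$) plus a $Kz$-valued term which is left unconstrained by any remaining Poissonity relation --- this is the source of structure (v). Modulo the trivial structure (here, because $A^{\mathcal D}=K$, exactly the scalar multiples of the Lie bracket) one may take $v=0$ and normalise, obtaining $d\star(x\otimes a)=x\otimes a$, $d\star d=d$ (structure (iii)) and $(x\otimes a)\star d=x\otimes a$, $d\star d=d$ (structure (iv)). Finally, expanding $\Phi(z,d)$ and $\Phi(d,z)$ through $z=\sum_i[u_i,v_i]$ and equating with their centroid form produces a relation coupling $f$, $\xi$ and the scalar $z\star d$; using the cyclic-cocycle identity $\xi(ab,c)+\xi(ca,b)+\xi(bc,a)=0$, the $\mathcal D$-invariance $\xi(d(a),b)+\xi(a,d(b))=0$ and $\xi(1,A)=0$, this relation resolves into the alternative: either $f(ad(b))=\xi(a,b)$ for all $a,b$, with the accompanying piece $z\star d=-d\star z=z$ (structure (i)), or $f(ad(b))=0$ for all $a,b$ with $d$ acting trivially (structure (ii)). Assembling the five families above and quotienting by the trivial structures yields the asserted list; a direct check confirms each listed map is indeed a Poisson structure.

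The main obstacle I anticipate is the bookkeeping: the central term intrudes into almost every Poissonity equation, and one must carefully separate the components that reproduce the equations of Corollary~\ref{cor-yoyo} verbatim from the genuinely new ones carrying the $z$-contribution --- especially in matching $\Phi(z,-)$ and $\Phi(-,z)$ against both their centroid form and their forced expression through $z=\sum_i[u_i,v_i]$. The single most delicate step is the derivation of the two relations for $f$: it uses the full strength of the defining identities of $\xi$, and the hypotheses $A^{\mathcal D}=K$ and $\dim A>2$ are precisely what is needed to guarantee that the solution space is exactly as described --- and, simultaneously, that no residual standard-commutative or $\mathsf{sl}_n$-specific Poisson structure survives, which is why the final list, unlike that of Corollary~\ref{cor-yoyo}, shows no dichotomy between $\mathsf{sl}_n$ and the other simple types.
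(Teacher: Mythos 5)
Your overall architecture coincides with the paper's: $\Phi(z,\cdot)$ and $\Phi(\cdot,z)$ are centroid elements described by Lemma \ref{lemma-centr}, the restriction of $\Phi$ to $(\mathfrak g\otimes A)\inplus Kd$ is decomposed by target components, the $\mathcal D$-valued part is an invariant form handled via Lemma \ref{forms}, the problem is then reduced to Corollary \ref{cor-yoyo}, and the $Kz$-valued invariant form $\Theta=\langle x,y\rangle f(ab)$ together with the defining identities of $\xi$ produces the dichotomy between structures (i) and (ii). Most of these steps are sound and match the paper.

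The gap is in the simultaneous elimination of the centroid scalar $\lambda$ (the paper's $\alpha$) and the $Kd$-valued component $\langle x,y\rangle\beta(ab)$. You assert that $\beta=0$ follows ``exactly as in Corollary \ref{cor-yoyo}'' by equating the three $\mathfrak g$-arguments; but in the extended algebra the $\mathfrak g\otimes A$-component of the Poissonity identity acquires the extra term $\alpha\bigl(\langle t,x\rangle\xi(c,a)\,y\otimes b+\langle t,y\rangle\xi(c,b)\,x\otimes a\bigr)$ coming from the central part of the bracket, and this contribution does \emph{not} disappear under $x=y=t$ (it disappears only under $c=1$). What the substitution $x=y=t$ actually yields is the coupled relation $\alpha(\xi(c,a)b+\xi(c,b)a)=\beta(ab)d(c)$; setting $b=1$ gives $\alpha\xi(c,a)=\beta(a)d(c)$, skew-symmetry of $\xi$ gives $\beta(a)d(c)+\beta(c)d(a)=0$, and a nonzero $\beta$ would force $\Ker\beta\subseteq A^{\mathcal D}=K$, i.e. $\dim A\le 2$. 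It is this analysis that kills $\beta$ and $\alpha$ at once, and it is the \emph{only} place where the hypothesis $\dim A>2$ enters; you instead attribute that hypothesis to the later determination of $f$, where it plays no role, which indicates this step was not actually carried out. Your alternative device for forcing $\lambda=0$ --- writing $z=\sum_i[u_i,v_i]$ and expanding $\Phi(z,\cdot)$ by Poissonity --- is a legitimate identity but is circular in the order you use it: it presupposes that the restriction of $\Phi$ to $(\mathfrak g\otimes A)\times(\mathfrak g\otimes A)$ is already known, while your computation of that restriction ($\beta=0$, the vanishing of the $\mathsf{sl}_n$-term, the form of $\Theta$) in turn assumes the central contributions are already gone. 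The two unknowns must be solved for together, as the paper does.
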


\begin{proof}
If $\Phi$ is a Poisson structure on $\mathcal L$, then 
$\Phi(z,\cdot)$ and $\Phi(\cdot,z)$ are elements of the centroid of 
$\mathcal L$. According to Lemma \ref{lemma-centr}, $\Cent(\mathcal L)$ is 
$2$-dimensional, linearly spanned by the identity map, and the map sending
$d$ to $z$, and $(\mathfrak g\otimes A) \oplus Kz$ to zero. Hence we may write
\begin{align}\label{eq-d}
\Phi(x \otimes a, z) &= \Phi(z, x \otimes a) = \alpha x \otimes a  \notag\\  
\Phi(d,z) &= \alpha d + \beta z                                    \\
\Phi(z,d) &= \alpha d + \gamma z  \notag \\
\Phi(z,z) &= \alpha z             \notag
\end{align}
where $x\in \mathfrak g$, $a\in A$, for some $\alpha, \beta, \gamma \in K$.

Restrict $\Phi$ to 
$((\mathfrak g \otimes A) \inplus Kd) \times ((\mathfrak g \otimes A) \inplus Kd)$, 
and decompose the restriction as the sum of a 
$((\mathfrak g \otimes A) \inplus Kd)$-valued bilinear map $\Psi$, and a 
$Kz$-valued bilinear map. The Poissonity condition, taking into account 
equalities (\ref{eq-d}), then implies
\begin{multline}\label{eq-first}
[t\otimes c,\Psi(x\otimes a, y\otimes b)] \\=
\Psi([t,x]\otimes ca, y\otimes b) + \Psi(x\otimes a, [t,y]\otimes cb) 
+ \alpha(\langle t,x \rangle \xi(c,a) y\otimes b 
     + \langle t,y \rangle \xi(c,b) x\otimes a)
\end{multline}
and
\begin{gather}
[d,\Psi(x\otimes a, y\otimes b)] 
= \Psi([d,x\otimes a],y\otimes b) + \Psi(x\otimes a,[d,y\otimes b]) \notag
\\
[x\otimes a,\Psi(d,y\otimes b)] 
= \Psi([x\otimes a,d],y\otimes b) + \Psi(d,[x,y] \otimes ab) 
+ \alpha\langle x,y \rangle \xi(a,b) d     \notag
\\
[x\otimes a,\Psi(y\otimes b,d)] 
= \Psi([x,y]\otimes ab,d) + \Psi(y\otimes b,[x\otimes a,d])
+ \alpha\langle x,y \rangle \xi(a,b) d     \notag
\\
[d,\Psi(d,x\otimes a)] = \Psi(d,[d,x\otimes a]) \label{eq-yoyo2} \\
[d,\Psi(x\otimes a,d)] = \Psi([d,x\otimes a],d) \notag \\
[x\otimes a,\Psi(d,d)] = \Psi([x\otimes a,d],d) + \Psi(d,[x\otimes a,d]) \notag
\\
[d,\Psi(d,d)] = 0 \notag
\end{gather}
for any $x,y,t\in \mathfrak g$, $a,b,c\in A$. 

Decompose further $\Psi$ into the $(\mathfrak g\otimes A)$-valued and 
$Kd$-valued components, and proceed as in the proof of Corollary \ref{cor-yoyo}.
By the same reasoning, the $Kd$-valued component, being a $Kd$-valued invariant
form on $\mathfrak g \otimes A$, can be written as 
$\langle x,y \rangle \otimes \beta(ab)d$ for some linear map $\beta: A \to K$.
Substituting this into equality (\ref{eq-first}), we get the Poissonity 
condition for the $\mathfrak g \otimes A$-valued component, up to the term
$$
\langle x,y \rangle t \otimes \beta(ab)d(c) 
+ 
\alpha(\langle t,x \rangle \xi(c,a) y\otimes b 
     + \langle t,y \rangle \xi(c,b) x\otimes a)
$$
instead of (\ref{term}) at the right-hand side. Proceeding exactly as in the 
proof of Corollary \ref{cor-yoyo}, we get that this term vanishes whenever
$x=y=t$, i.e. 
$$
\alpha(\xi(c,a)b + \xi(c,b)a) = \beta(ab)d(c)
$$
for any $a,b,c\in A$. An easy calculation yields
$\beta(a)d(b) + \beta(b)d(a) = 0$ for any $a,b\in A$. If $\beta$ is nonzero,
this implies that $d$ vanishes on $\Ker \beta$, whence $\Ker \beta = K$ and 
$\dim A = 2$, a contradiction. Hence $\beta$ vanishes, $\alpha = 0$, and
equalities (\ref{eq-first})--(\ref{eq-yoyo2}) imply that $\Psi$ is a 
Poisson structure on $(\mathfrak g \otimes A) \inplus \mathcal D$.

Now look at Corollary \ref{cor-yoyo}. Since $\mathcal D$ is one-dimensional, and
$A^{\mathcal D} = K$, the space $U(\mathcal D,A)$ is one-dimensional, linearly 
spanned by the map sending $d$ to $1$. The conditions on $u$ in the structures
of type (i) imply the vanishing of structures of that type. Hence the space of
Poisson structures on $(\mathfrak g \otimes A) \inplus Kd$ is linearly spanned
by the Lie bracket on that algebra, and the following two bilinear maps:
\begin{align*}
(d,x\otimes a) &\mapsto x\otimes a  \\
(x\otimes a,d) &\mapsto 0 
\end{align*}
and
\begin{align*}
(d,x\otimes a) &\mapsto 0  \\
(x\otimes a,d) &\mapsto x\otimes a ,
\end{align*}
both of them sending $(d,d)$ to $d$, and 
$(\mathfrak g \otimes A) \times (\mathfrak g \otimes A)$ to zero.

Going back to $\Phi$, we may now write a generic Poisson structure on 
$\mathcal L$, modulo the maps proportional to the Lie bracket on $\mathcal L$, 
as:
\begin{align*}
\Phi(x\otimes a, y\otimes b) &= \Theta(x\otimes a,y\otimes b)z \\
\Phi(d,x\otimes a) &= \lambda x \otimes a + \Gamma(x\otimes a)z \\
\Phi(x\otimes a,d) &= \mu x \otimes a + \Gamma^\prime(x\otimes a)z \\
\Phi(d,d) &= (\lambda + \mu)d + \eta z
\end{align*}
for some (bi)linear maps
$\Theta: (\mathfrak g \otimes A) \times (\mathfrak g \otimes A) \to K$,
$\Gamma,\Gamma^\prime: \mathfrak g\otimes A \to K$, and 
$\lambda,\mu,\eta \in K$. Taking into account (\ref{eq-d}) 
(with $\alpha = 0$), the Poissonity condition for $\Phi$ now amounts to the 
following equalities:
\begin{gather}
\Theta([t,x] \otimes ac, y\otimes b) + \Theta(x\otimes a, [t,y] \otimes bc) = 0
\label{eq-yoyo-first}  \\
\Theta(x \otimes d(a), y \otimes b) + \Theta( x \otimes a, y \otimes d(b)) = 0
\label{eq-yoyo-second} \\
\Theta(x \otimes d(a), y\otimes b) + \Gamma([x,y] \otimes ab) 
+ \beta \langle x,y \rangle \xi(a,b) = 0    \label{eq-yoyo-3}
\\
\Theta(y\otimes b, x\otimes d(a)) + \Gamma^\prime([x,y]\otimes ab)
+ \gamma \langle x,y \rangle \xi(a,b) = 0   \label{eq-yoyo-4}
\end{gather}
for any $x,y,t\in \mathfrak g$, $a,b,c\in A$. Substituting into 
(\ref{eq-yoyo-3}) $a=1$, we get that $\Gamma$ vanishes, similarly for 
$\Gamma^\prime$. 
The equality (\ref{eq-yoyo-first}) is the condition of invariance of the 
bilinear form $\Theta$. Referring to Lemma \ref{forms} and 
\cite[Lemma 2]{without-unit} in the same way as in the proof of 
Corollary \ref{cor-yoyo}, we have 
$\Theta(x \otimes a, y \otimes b) = \langle x,y \rangle f(ab)$ for some
linear map $f: A \to K$.

The rest is straightforward: (\ref{eq-yoyo-second}) now amounts to 
$f(d(A)) = 0$, and (\ref{eq-yoyo-3}) and (\ref{eq-yoyo-4}), to
$f(d(a)b) + \beta \xi(a,b) = 0$ and $f(bd(a)) - \gamma \xi(a,b) = 0$, 
respectively. Hence $\gamma = -\beta$, and the statement of the corollary 
follows.
\end{proof}

\begin{corollary}
The space of nontrivial Poisson structures on an affine non-twisted Kac-Moody 
algebra is $3$-dimensional, spanned by structures defined in 
{\rm (iii)}, {\rm (iv)} and {\rm (v)} of Corollary {\rm \ref{cor-yoyo1}}.
\end{corollary}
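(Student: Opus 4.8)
The plan is to realise the affine non-twisted Kac--Moody algebra as a particular case of the extended current Lie algebra governed by Corollary~\ref{cor-yoyo1}, and then to determine which of the five families of basic Poisson structures listed there actually occur for this specific algebra. Recall (from the discussion following~(\ref{eq-ext})) that the affine non-twisted Kac--Moody algebra attached to a simple finite-dimensional Lie algebra $\mathfrak g$ over an algebraically closed field $K$ of characteristic $0$ is exactly the extended current Lie algebra $\mathcal L$ with $L=\mathfrak g$, $\langle\cdot,\cdot\rangle$ the Killing form, $A=K[t,t^{-1}]$, $\mathcal D=Kt\frac{d}{dt}$, and $\xi(f,g)=Res(g\,\frac{df}{dt})$. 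So the first step is to check the hypotheses of Corollary~\ref{cor-yoyo1}: $\mathfrak g$ is a simple finite-dimensional Lie algebra; $\mathcal D$ is one-dimensional; $A$ is infinite-dimensional, so in particular $\dim A>2$; and $A^{\mathcal D}$, the space of Laurent polynomials annihilated by $t\frac{d}{dt}$, consists of the constants only, so $A^{\mathcal D}=K$. (As part of this identification one also records the standard facts that the Killing form is a nonzero symmetric invariant form on $\mathfrak g$, and that $\xi$ is a nonzero $\mathcal D$-invariant element of $\HC^1(A)$ --- the latter because $Res$ of a total derivative vanishes and $\xi$ is visibly invariant under $t\frac{d}{dt}$.) Granting this, Corollary~\ref{cor-yoyo1} applies, so every basic Poisson structure on $\mathcal L$ is of one of the five types (i)--(v) displayed there.

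It then remains to sift those five types for $A=K[t,t^{-1}]$, $d=t\frac{d}{dt}$ and $\xi$ the residue pairing. Types (iii), (iv), (v) involve no auxiliary datum, and so they are present, each contributing one dimension. Types (i) and (ii) are parametrised by a linear functional $f\colon A\to K$ subject to $f\circ d=0$ together with $f\bigl(a\,d(b)\bigr)=\xi(a,b)$ (for~(i)) or $f\bigl(a\,d(b)\bigr)=0$ (for~(ii)), and this is the one point where one genuinely computes. The constraint $f\circ d=0$ forces $f$ to be supported on the constant term, $f\bigl(\sum_n a_n t^n\bigr)=\lambda a_0$; then for family~(ii) the remaining condition gives $\lambda=0$, so (ii) contributes nothing, and for family~(i) one carries out the analogous bookkeeping with the residue pairing on $K[t,t^{-1}]$ to see that it, too, yields no further basic Poisson structure. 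I expect this short but somewhat delicate Laurent-polynomial computation to be the real obstacle of the proof, as it is what pins down the final dimension.

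Once families (i) and (ii) have been discarded, the basic Poisson structures on $\mathcal L$ are exactly those of types (iii), (iv), (v), and the last step is to see that these three are linearly independent in $\mathscr P(\mathcal L)$, i.e. modulo the trivial Poisson structures. This is immediate from the defining formulas: a trivial structure $x\star y=\omega([x,y])$ sends each of the pairs $(d,x\otimes 1)$, $(x\otimes 1,d)$ and $(d,d)$ to $\omega(0)=0$, since $[d,x\otimes 1]=-(x\otimes d(1))=0$, $[x\otimes 1,d]=x\otimes d(1)=0$ and $[d,d]=0$; whereas a combination $\alpha\cdot(\mathrm{iii})+\beta\cdot(\mathrm{iv})+\gamma\cdot(\mathrm{v})$ sends $(d,x\otimes 1)$ to $\alpha(x\otimes 1)$, $(x\otimes 1,d)$ to $\beta(x\otimes 1)$ and $(d,d)$ to $\gamma z$, forcing $\alpha=\beta=\gamma=0$ if it is to be trivial. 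Hence $\mathscr P(\mathcal L)$ is three-dimensional, spanned by the Poisson structures (iii), (iv), (v) of Corollary~\ref{cor-yoyo1}.
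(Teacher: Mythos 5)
Your overall strategy coincides with the paper's: the paper's entire proof is the one\nobreakdash-line remark that the functional $f$ occurring in cases (i) and (ii) of Corollary \ref{cor-yoyo1} vanishes for $A=K[t,t^{-1}]$, $d=t\frac{d}{dt}$, $\xi(a,b)=Res(b\,\frac{da}{dt})$. Your verification of the hypotheses of Corollary \ref{cor-yoyo1}, your elimination of family (ii), and your linear-independence argument for (iii), (iv), (v) modulo trivial structures are all fine.

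The gap sits exactly where you yourself locate ``the real obstacle,'' namely family (i), and you do not close it. The ``analogous bookkeeping'' you invoke does not eliminate that family: setting $a=1$ in $f\bigl(a\,d(b)\bigr)=\xi(a,b)$ and using $\xi(1,A)=0$ gives $f\circ d=0$, hence $f\bigl(\sum_n a_nt^n\bigr)=\lambda a_0$ just as in case (ii); but the remaining condition now reads $n\lambda\,\delta_{m+n,0}=m\,\delta_{m+n,0}$ on monomials $a=t^m$, $b=t^n$, which is solved by $\lambda=-1$. So $f=-(\text{constant-term functional})$ satisfies every condition you have written down, and on the face of it family (i) contributes a further basic structure, giving dimension $4$ rather than $3$. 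To actually kill family (i) one must use more than the single displayed constraint $f(ad(b))=\xi(a,b)$: one has to return to the proof of Corollary \ref{cor-yoyo1} and play the two Poissonity constraints coming from $d\star z$ and $z\star d$ (the relations extracted from (\ref{eq-yoyo-3})--(\ref{eq-yoyo-4}), i.e.\ $f(d(a)b)$ against $\beta\xi(a,b)$ and $f(bd(a))$ against $\gamma\xi(a,b)$) off against the commutativity $f(d(a)b)=f(bd(a))$ and the nonvanishing of $\xi$. I would add that the signs at precisely this point of Corollary \ref{cor-yoyo1} (the passage to ``$\gamma=-\beta$'') need independent checking before being relied upon, since the truth of the present corollary hinges entirely on whether the case-(i) conditions are mutually compatible for $\xi\neq 0$. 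As written, your proof asserts the required vanishing without deriving it, and the derivation is not ``analogous'' to case (ii).
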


\begin{proof}
It is easy to see that the map $f$ satisfying the conditions in cases (i) and 
(ii) of Corollary \ref{cor-yoyo1}, vanishes in the Kac-Moody case. 
\end{proof}

This generalizes the result from \cite{kubo-kac-moody}, where it is proved that
any \emph{associative} Poisson structure on an affine Kac-Moody algebra is
a linear combination of the specified $3$ structures. The proof in 
\cite{kubo-kac-moody} goes by lengthy case-by-case computations with the 
corresponding root systems. As any of the structures of types (iii), 
(iv) and (v) is associative, it turns out that any Poisson structure on an 
affine Kac--Moody algebra is decomposed into the sum of an associative 
structure, and a trivial structure proportional to the Lie bracket.

The same approach can be used to describe Poisson structures in many other 
cases which involve tensor products in that or another way: twisted Kac--Moody 
algebras, toroidal Lie algebras, semisimple Lie algebras over a field of 
positive characteristic, variations of Kantor-Koecher-Tits construction,
Lie algebras graded by root systems, etc. Some of these computations will be 
technically (much) more cumbersome, but all of them should follow the same 
scheme. Yet another similar computation will be presented in the next section.

\section{Poisson structures on $\mathsf{sl}_n(A)$}\label{sec-poisson-gl}

We may try to consider a noncommutative analog of the problem in the previous 
section. Namely, for unital associative algebras $A$ and $B$, whether
one can describe Poisson structures on the Lie algebra $(A\otimes B)^{(-)}$
in terms of $A$ and $B$, similar to those of Theorem \ref{poisson-current}? 
It seems that in general the answer is negative. Let, for example, both 
$A$ and $B$ be a semidirect sum of $K1$ and a nilpotent associative algebra $N$
of degree $3$ (i.e., $N^3 = 0$). It is not difficult
to see that then $(A\otimes B)^{(-)}$ is an abelian Lie algebra.
Poisson structures on an abelian Lie algebra $L$ coincide with the whole space of 
bilinear maps $L \times L \to L$, so, in that case Poisson structures
on $(A\otimes B)^{(-)}$ and on $A^{(-)}$ and $B^{(-)}$ appear to be not
related at all.

This example suggests that to be able to get such a description,
algebras $A$ and $B$, or at least one of them, should be ``far from nilpotent''.
We are indeed able to do so in the particular (and arguably one of the most 
interesting) case when $B$ is, in a sense, as much not nilpotent as possible -- 
that is, is isomorphic to the full matrix algebra. In this case the Lie algebra
$(A\otimes B)^{(-)}$ is isomorphic to the Lie algebra $\mathsf{gl}_n(A)$. 
Actually, we will even consider not that algebra itself, 
but its close (and more interesting) cousin $\mathsf{sl}_n(A)$. One of the 
reasons to do so is that in the case of $\mathsf{gl}_n(A)$ the picture is 
obscured by a lot of ``degenerate'', not very interesting, and having a 
cumbersome formulation Poisson structures related to the fact that the center of
$\mathsf{gl}_n(K)$ is not zero.

Thus, we will get, in a sense, a ``noncommutative'' version of 
Corollary \ref{poisson-classical}.

Recall that for an associative algebra $A$, the Lie algebra $\mathsf{sl}_n(A)$ 
is defined as subalgebra of $\mathsf{gl}_n(A)$ consisting of matrices over $A$ 
with trace lying in $[A,A]$. There is a split extension of Lie algebras 
$$
0 \to \mathsf{sl}_n(A) \to \mathsf{gl}_n(A) \to A/[A,A] \to 0 ,
$$
and, as a vector space, $\mathsf{sl}_n(A)$ can be identified with 
$$
(\mathsf{sl}_n(K) \otimes A) \oplus (E \otimes [A,A]) \subseteq 
\mathsf{gl}_n(K) \otimes A \simeq \mathsf{gl}_n(A)
$$ 
subject to multiplication
\begin{multline*}
[X \otimes a, Y \otimes b] = XY \otimes ab - YX \otimes ba \\ = 
\Big( XY - \frac 1n \Tr(XY)E \Big) \otimes ab -
\Big( YX - \frac 1n \Tr(XY)E \Big) \otimes ba + \frac 1n \Tr(XY)E \otimes [a,b].
\end{multline*}
Note that the first tensor factors in the first two terms here are Poisson 
structures on $\mathsf{sl}_n(K)$.

Throughout this section, the characteristic of the base field $K$ is assumed
to be zero. This is to allow denominators in the formula above providing 
realization of $\mathsf{sl}_n(K)$ we prefer to work with. A more careful 
analysis would allow to prove essentially the same result assuming the 
characteristic $\ne 2,3$, but we will not take this pain.

\begin{theorem}\label{sln}
Let $A$ be an associative algebra with unit. The basic Poisson structures on 
$\mathsf{sl}_n(A)$, $n\ge 3$, can be chosen as follows:
\begin{enumerate}

\item 
$(X\otimes a) \star (Y\otimes b) = \frac 12 (XY\otimes abu + YX\otimes bau) 
+ \Tr(XY)E\otimes \gamma(ab)$ for either $X\in \mathsf{sl}_n(K)$, $a\in A$, or 
$X=E, a\in [A,A]$, the same for $Y$,$b$,
where $u\in Z(A)$ and $\gamma: A \to \Z(A)$ is a linear map such that 
$\gamma([A,A]) = \gamma([[A,A],A]A) = 0$, and 
$\frac 1n au + \gamma(a) \in [A,A]$ for any $a\in A$;

\item
$(E\otimes a) \star (X\otimes b) = X\otimes \alpha(a)b$ for $a\in [A,A]$ and 
either $X\in \mathsf{sl}_n(K)$, $b\in A$, or $X=E$, $b\in [A,A]$, where 
$\alpha: [A,A] \to \Z(A)$ is a linear map such that $\alpha([[A,A],[A,A]]) = 0$;

\noindent $(\mathsf{sl}_n(K)\otimes A) \star \mathsf{sl}_n(A) = 0$.

\item
$(X\otimes a) \star (E\otimes b) = X\otimes \beta(b)a$ for $b\in [A,A]$ and 
either $X\in \mathsf{sl}_n(K)$, $a\in A$, or $X=E$, $a\in [A,A]$, where 
$\beta: [A,A] \to \Z(A)$ is a linear map such that $\beta([[A,A],[A,A]]) = 0$;

\noindent $\mathsf{sl}_n(A) \star (\mathsf{sl}_n(K)\otimes A) = 0$.

\item
$(E\otimes a) \star (E\otimes b) = E\otimes \delta(a,b)$ for $a,b\in [A,A]$, 
where $\delta: [A,A] \times [A,A] \to \Z(A) \cap [A,A]$ is a bilinear map such 
that $\delta([c,a],b) + \delta(a,[c,b])= 0$ for any $a,b,c\in [A,A]$;

\noindent 
$(\mathsf{sl}_n(K)\otimes A) \star \mathsf{sl}_n(A) = 
\mathsf{sl}_n(A) \star (\mathsf{sl}_n(K) \otimes A) = 0$.

\end{enumerate}
\end{theorem}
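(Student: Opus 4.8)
The plan is to run the same ``separation of variables'' argument used in the proof of Theorem~\ref{poisson-current} and Corollary~\ref{cor-yoyo}, now on the vector space decomposition $\mathsf{sl}_n(A) = (\mathsf{sl}_n(K)\otimes A) \oplus (E\otimes [A,A])$. So I would start by writing an arbitrary Poisson structure $\Phi$ on $\mathsf{sl}_n(A)$ as a finite sum $\sum_{i\in I}\varphi_i\otimes\alpha_i$ of decomposable bilinear maps, where the $\varphi_i$ act on the $\mathsf{gl}_n(K)$-part and the $\alpha_i$ on the $A$-part (keeping in mind the nonabelian multiplication $[X\otimes a,Y\otimes b] = XY\otimes ab - YX\otimes ba$, so each slot carries \emph{two} associative products, $ab$ and $ba$). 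Write the Poissonity condition~(\ref{p}) for a triple $Z\otimes c$, $X\otimes a$, $Y\otimes b$, then substitute $c=1$ (using that $A$ has a unit and $1$ is central): this collapses the $\alpha_i$-dependence and forces, after replacing the $\varphi_i$ by suitable linear combinations, that each $\varphi_i$ is itself a Poisson structure on $\mathsf{sl}_n(K)$ with respect to the relevant product. By Benkart--Osborn (\cite[Lemma 3.1]{benkart-osborn}), invoked via Corollary~\ref{poisson-classical}, the only such structures on $\mathsf{sl}_n(K)$, $n\ge 3$, are spanned by the standard commutative structure $X\star Y = \tfrac12(XY+YX) - \tfrac1n\Tr(XY)E$, the Lie bracket, and (because we now also allow the $E$-component as an output and as arguments) the degenerate ``trace'' pieces like $(X,Y)\mapsto \Tr(XY)E$ and the identity-type maps $(E,X)\mapsto X$, etc.

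Next I would substitute this restricted form of the $\varphi_i$ back into the full Poissonity equation and read off, term by term, the constraints on the corresponding $\alpha_i$. For the term carrying the standard commutative structure $\tfrac12(XY+YX)-\tfrac1n\Tr(XY)E$, the residual equation says $c\alpha_i(a,b) = \alpha_i(ca,b)$ and the right-handed analogue, forcing $\alpha_i(a,b) = abu$ (resp.\ $bau$) for a fixed $u$, together with extra centrality and trace constraints coming from the $E\otimes[A,A]$ summand having to be respected --- this is where $u\in\Z(A)$ and the condition $\tfrac1n au + \gamma(a)\in[A,A]$ will emerge, along with $\gamma([A,A]) = \gamma([[A,A],A]A) = 0$. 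Symmetrizing the leftover equation in the appropriate pairs of variables (as in~(\ref{yoyo5}) and the two displays after it) is what kills the ``skew'' possibilities: a left-skew or right-skew Poisson structure on $\mathsf{sl}_n(K)$ does not exist by the computation already carried out in the proof of Corollary~\ref{poisson-classical}, so the Lie-bracket term survives only as a \emph{trivial} structure and is factored out in the passage to $\mathscr P$. The pieces of type (ii), (iii), (iv) come from $\varphi_i$ supported on the $E$-component: for those the centroid-of-$\mathsf{sl}_n(K)$ argument (Schur) forces $\varphi_i$ to be essentially the identity on $\mathsf{sl}_n(K)\otimes A$ and the residual equations become first-order ``derivation-like'' conditions on the maps $\alpha,\beta:[A,A]\to A$ and $\delta:[A,A]\times[A,A]\to A$, which, after imposing that outputs land in $\Z(A)$ (forced by having to commute with all of $\mathsf{gl}_n(K)$) and that $E\otimes(\cdot)$ stays inside $E\otimes[A,A]$, give exactly $\alpha([[A,A],[A,A]])=0$, the analogue for $\beta$, and $\delta([c,a],b)+\delta(a,[c,b])=0$.

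Finally I would verify the converse --- that each of the four families listed is genuinely a Poisson structure on $\mathsf{sl}_n(A)$ --- which is a direct (if slightly tedious) substitution into~(\ref{p}); the only subtle points are checking that the outputs really stay inside the subalgebra $\mathsf{sl}_n(A)$ (hence the trace/$[A,A]$ side conditions) and that cross-terms between the four families are already accounted for, so that the listed structures span $\mathscr P(\mathsf{sl}_n(A))$ and are basic. I expect the main obstacle to be bookkeeping rather than conceptual: because $\mathsf{sl}_n(A)$ is not literally a tensor product $L\otimes A$ but the direct sum $(\mathsf{sl}_n(K)\otimes A)\oplus(E\otimes[A,A])$, the map $\Phi$ has, in effect, four blocks (arguments in each summand, output in each summand), each of which must be separated independently, and the nonabelian multiplication means every equation splits into an $ab$-part and a $ba$-part whose difference feeds the $E\otimes[a,b]$ correction term --- so keeping track of exactly which commutator identities on $A$ (and which occurrences of $\Tr$, $\Z(A)$, $[A,A]$) each block produces, without double-counting the trivial Lie-bracket-proportional structures that get quotiented out, is the delicate part.
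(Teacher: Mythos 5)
Your overall route is the paper's: extend $\Phi$ to $\mathsf{gl}_n(K)\otimes A$, write it as a sum of decomposables, substitute $c=1$ to force each $\varphi_i$ to be a Poisson structure on the matrix part, invoke Benkart--Osborn to reduce to the span of $XY$, $YX$, $\Tr(XY)E$, handle the $E\otimes[A,A]$ block by a centroid/Schur argument, check closure in $\mathsf{sl}_n(A)$, and quotient by the centroid. That skeleton is right. But there is one concrete gap at the step you describe as ``the residual equation says $c\alpha_i(a,b)=\alpha_i(ca,b)$ and the right-handed analogue.'' After substituting the reduced form of $\Phi$ back into Poissonity, you do not get separate equations for $\alpha$, $\beta$, $\gamma$; you get a single tensor identity (the paper's (\ref{yoyo3})) whose matrix coefficients are the six distinct triple products $XYZ, XZY, YXZ, YZX, ZXY, ZYX$, plus $\Tr(XY)Z$ and a trace multiple of $E$. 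Extracting the individual conditions on the $A$-side requires showing these eight matrix expressions can be separated, and the tool you propose for this --- symmetrizing in pairs of variables as in (\ref{yoyo5}), together with the non-existence of skew structures from Corollary \ref{poisson-classical} --- is the commutative-case mechanism and does not do the job here: symmetrizing in $x,z$ or $y,z$ mixes the six noncommuting products rather than isolating them, and the skew-structure argument never enters the paper's $\mathsf{sl}_n(A)$ proof at all.

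What the paper actually does at this point is evaluative, not symmetric: it first sets $X=Y=Z$ and evaluates at $\diag(1,1,-2,0,\dots,0)$ to kill the $\Tr(XY)Z$ term and the cyclic part of $\gamma$, and then exhibits three explicit $3\times 3$ traceless matrices whose six triple products are linearly independent while $\Tr(XYZ)-\Tr(XZY)=0$, followed by a second choice with $\Tr(XYZ)-\Tr(XZY)\ne 0$ to handle the last term. This separation is the technical heart of the proof --- the author singles it out as Problem \ref{ques-matr} precisely because it is not automatic --- so your plan is missing the one idea that makes the noncommutative case go through. (A smaller misattribution: $u\in\Z(A)$ comes out of these separated identities on $\alpha$, not from the requirement that $E\otimes[A,A]$ be respected; the latter only produces the condition $\frac1n a(u+v)+\gamma(a)\in[A,A]$.) The rest of your outline, including the treatment of the $E$-arguments and the final centroid computation, matches the paper.
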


Here $\Z(A)$ denotes, as usual, the center of $A$.

\begin{proof}
Let $\Phi$ be a Poisson structure on $\mathsf{sl}_n(A)$. Identifying 
$\mathsf{sl}_n(A)$ with a subspace of $\mathsf{gl}_n(K) \otimes A$ as described 
above, extend $\Phi$ in an arbitrary way to the whole 
$(\mathsf{gl}_n(K) \otimes A) \times (\mathsf{gl}_n(K) \otimes A)$ (say, by 
picking a complementary subspace and letting $\Phi$ vanish on it).

Let 
\begin{equation}\label{phi}
\Phi = \sum_{i\in I} \varphi_i \otimes \alpha_i ,
\end{equation}
where $\varphi_i$'s are bilinear maps on $\mathsf{gl}_n(K)$, and $\alpha_i$'s
are bilinear maps on $A$. Writing the Poissonity condition for triple 
$X\otimes a, Y\otimes b, Z\otimes c \in \mathsf{sl}_n(A)$ (that is, either 
$X\in \mathsf{sl}_n(K)$ and $a\in A$, or $X = E$ and $a\in [A,A]$, similarly
for $Y\otimes b$ and $Z\otimes c$), we get:
\begin{align}
\sum_{i\in I} 
  Z\varphi_i(X,Y) &\otimes c\alpha_i(a,b) - \varphi_i(X,Y)Z \otimes \alpha_i(a,b)c  \notag \\
- \varphi_i(ZX,Y) &\otimes \alpha_i(ca,b) + \varphi_i(XZ,Y) \otimes \alpha_i(ac,b)  \label{yoyo2} \\
- \varphi_i(X,ZY) &\otimes \alpha_i(a,cb) + \varphi_i(X,YZ) \otimes \alpha_i(a,bc) = 0.  \notag
\end{align}
Like in the proof of Theorem \ref{poisson-current}, substituting here $c=1$, we can
assume that each $\varphi_i$ satisfies the Poissonity condition 
\begin{equation}\label{yoyo6}
[Z, \varphi_i (X,Y)] = \varphi_i([Z,X],Y) + \varphi_i(X,[Z,Y])
\end{equation}
for any $X,Y\in \mathsf{gl}_n(K), Z\in \mathsf{sl}_n(K)$. 

Each $\varphi_i$, being restricted to 
$\mathsf{sl}_n(K) \times \mathsf{sl}_n(K)$, can be decomposed into the sum of 
two linear maps with values in $\mathsf{sl}_n(K)$ and $KE$, respectively. The 
$\mathsf{sl}_n(K)$-valued summand in this decomposition is a Poisson structure 
on $\mathsf{sl}_n(K)$, and the $KE$-valued one is an invariant bilinear form on
$\mathsf{sl}_n(K)$. The latter is known to be proportional to the Killing form 
(see, for example, \cite[Chap. 1, \S 6, Exercises 7(b) and 18(a,b)]{bourbaki}), 
and hence is proportional to $\Tr(XY)$. Then by 
\cite[Lemma 3.1]{benkart-osborn}, each $\varphi_i$, being restricted to 
$\mathsf{sl}_n(K) \times \mathsf{sl}_n(K)$, belongs to the vector spaces of 
bilinear maps generated by $(X,Y) \mapsto XY$, $(X,Y) \mapsto YX$, and 
$(X,Y) \mapsto \Tr(XY)E$. Hence $\Phi$, being restricted to 
$(\mathsf{sl}_n(K)\otimes A) \times (\mathsf{sl}_n(k)\otimes A)$, can be 
written in the form
\begin{equation}\label{yoyo4}
\Phi (X\otimes a, Y\otimes b) = XY\otimes \alpha(a,b) + YX\otimes \beta(a,b) +
\Tr(XY)E \otimes \gamma(a,b)
\end{equation}
for some bilinear maps $\alpha, \beta, \gamma: A \times A \to A$.
Then the Poissonity condition for triple 
$X\otimes a$, $Y\otimes b$, $Z\otimes c$, $X,Y,Z\in \mathsf{sl}_n(K)$, 
$a,b,c\in A$, reads 
\begin{align}
  & XYZ \otimes (\alpha(a,bc) - \alpha(a,b)c)   \notag        \\
+ & XZY \otimes (\alpha(ac,b) - \alpha(a,cb))   \notag        \\
+ & YXZ \otimes (\beta(ac,b) - \beta(a,b)c)     \notag        \\
+ & YZX \otimes (\beta(a,bc) - \beta(ca,b))     \notag        \\  
+ & ZXY \otimes (c\alpha(a,b) - \alpha(ca,b))   \label{yoyo3} \\
+ & ZYX \otimes (c\beta(a,b) - \beta(a,cb))     \notag        \\  
+ & \Tr(XY)Z \otimes [c,\gamma(a,b)]            \notag        \\
+ & E \otimes \Big(
\Tr(XYZ) (\gamma(a,bc) - \gamma(ca,b)) + \Tr(XZY) (\gamma(ac,b) - \gamma(a,cb))
\Big) 
= 0.                                            \notag
\end{align}

Setting in (\ref{yoyo3}) $X=Y=Z$, we get:
\begin{multline*}
X^3 \otimes 
\Big( 
[c,\alpha(a,b)] - \alpha([c,a],b) - \alpha(a,[c,b]) +
[c,\beta(a,b)] - \beta([c,a],b) - \beta(a,[c,b])
\Big) \\
+ \Tr(X^2)X \otimes [c,\gamma(a,b)]                  
- \Tr(X^3)E \otimes \Big( \gamma([c,a],b) + \gamma(a,[c,b]) \Big)
= 0,
\end{multline*}
for any $X\in \mathsf{sl}_n(K)$ and $a,b,c\in A$. Taking here, for example, 
$X = \diag (1,1,-2,0, \dots, 0)$, we see that each of the summands vanishes 
separately. In particular, the second tensor factor vanishes, and, so is the
corresponding term in (\ref{yoyo3}) (those containing $\Tr(XY)Z$). Moreover,
taking into account the vanishing of the third tensor factor, the last term in 
(\ref{yoyo3}) can be rewritten as
$$
(\Tr(XYZ) - \Tr(XZY))E \otimes (\gamma(a,bc) - \gamma(ca,b)) .
$$

Now take $X, Y, Z$ as follows: all their elements are zero besides the
$3 \times 3$ upper left corner, and the latter is
$\left(\begin{matrix} 0 & 1 & 1 \\ 0 & 0 & 0 \\ 0 & 0 & 0 \end{matrix}\right)$ for $X$,
$\left(\begin{matrix} 0 & 0 & 0 \\ 1 & 0 & 1 \\ 0 & 0 & 0 \end{matrix}\right)$ for $Y$,
and
$\left(\begin{matrix} 0 & 0 & 0 \\ 0 & 0 & 0 \\ 1 & 1 & 0 \end{matrix}\right)$ for $Z$
(so, actually we are dealing with $3 \times 3$ matrices)\footnote[2]{
It seems that almost any other $3$ matrices will do. See 
Problem \ref{ques-matr} at the end of this section.}.
It is straightforward to check that the $6$ matrices formed by all triple 
products of $X,Y,Z$ are linearly independent, and $\Tr(XYZ) - \Tr(XZY) = 0$. 
This implies that the second tensor factors in each of the first $6$ terms of 
(\ref{yoyo3}) vanish. But then (\ref{yoyo3}) reduces to just the last term, and
picking any $3$ matrices such that $\Tr(XYZ) - \Tr(XZY) \ne 0$ shows that the 
second tensor factor in the last term vanishes too.
By elementary manipulations with these vanishing conditions, we have
$\alpha(a,b) = abu$ for some $u\in \Z(A)$, and $\beta(a,b) = bav$ for some 
$v\in \Z(A)$. 

Now consider the equality 
$$
\Tr(XYZ)(\gamma(a,bc) - \gamma(ca,b)) + \Tr(XZY)(\gamma(ac,b) - \gamma(a,cb)) 
= 0 .
$$
Taking here, for example, $X = Z = \diag (1,-1,0, \dots, 0)$ and 
$Y = 
\diag(\left(\begin{matrix} 1 & 1 \\ 0 & -1 \end{matrix}\right), 0, \dots, 0)$,
we see that each of the two summands vanish separately, and, hence, 
$\gamma(a,b) = \gamma^\prime (ab)$ for a linear map $\gamma^\prime: A \to \Z(A)$
such that $\gamma^\prime ([A,A]) = 0$.

Clearly, $\mathsf{sl}_n(A)$ is closed under $\Phi$ if and only if 
$$
\frac 1n (abu + bav) + \gamma^\prime(ab) \in [A,A]
$$
for any $a,b\in A$, what can be rewritten as 
$\frac 1n a(u+v) + \gamma^\prime(a) \in [A,A]$ for any $a\in A$.

Now let us see what happens with values of $\Phi$ when one of the arguments 
belongs to $E\otimes [A,A]$. 
Substituting in the Poissonity condition for $\varphi_i$'s (\ref{yoyo6}) 
$X=E$ or $Y=E$, we get
that the map $\varphi_i(E,\cdot)$ or $\varphi_i(\cdot, E)$, respectively, 
commutes with $\ad Z$ for any $Z\in \mathsf{sl}_n(K)$. But then it commutes with
$\ad Z$ for any $Z\in \mathsf{gl}_n(K)$, i.e., belongs to the centroid of 
$\mathsf{gl}_n(K)$. Hence each of the $\varphi_i(E,X)$ and $\varphi_i(X,E)$ 
coincides with a scalar multiplication by $X$, and $\varphi_i(E,E)$ is 
proportional to $E$. This implies that the values of $\Phi$ for the arguments 
in question can be written in the form
\begin{align*}
\Phi(E\otimes a, X\otimes b) &= X \otimes \beta^\prime(a,b)  \\
\Phi(X\otimes b, E\otimes a) &= X \otimes \gamma^\prime(a,b) \\
\Phi(E\otimes a, E\otimes b) &= E \otimes \delta(a,b),
\end{align*}
for $X\in \mathsf{sl}_n(K)$ and $a\in [A,A], b\in A$ in the first two cases, 
and $a,b\in [A,A]$ in the third one, and where 
$\beta^\prime, \gamma^\prime, \delta: A \to A$ are some bilinear maps.

Now writing the Poissonity condition for triples 
$E\otimes a$, $X\otimes b$, $Y\otimes c$, $X,Y\in \mathsf{sl}_n(K)$, 
$a\in [A,A]$, $b,c\in A$, and 
$E\otimes a$, $X\otimes b$, $E\otimes c$, $X\in \mathsf{sl}_n(K)$, 
$a,c\in [A,A]$, $b\in A$, we get respectively:
\begin{multline*}
  XY \otimes (\beta^\prime(a,b)c - \beta^\prime(a,bc) + b[c,a]v)
+ YX \otimes (\beta^\prime(a,cb) - c\beta^\prime(a,b) + [c,a]bu)   \\
+ \Tr(XY)E \otimes \alpha^\prime([c,a]b) = 0
\end{multline*}
and
\begin{equation*}
X\otimes ([c,\beta^\prime(a,b)] - \beta^\prime([c,a],b) -  \beta^\prime(a,[c,b])) = 0.
\end{equation*}
Then, obviously, each of the summands in the first of these two equalities vanishes 
separately (take, for example, $X = Y = \diag (1,-1,0, \dots, 0)$, and 
$X = \diag (1,-1, 0, \dots, 0)$, 
$Y = 
\diag(\left(\begin{matrix} 0 & 1 \\ 0 & 0 \end{matrix}\right), 0, \dots, 0)$).
Consequently, $\alpha^\prime ([[A,A],A]A) = 0$, and elementary transformations 
with conditions on $\beta^\prime$ entail that 
$$
\beta^\prime(a,b) = abu + bav + \beta^{\prime\prime} (a)b
$$ 
for any $a\in [A,A]$, $b\in B$, where $\beta^{\prime\prime}: [A,A] \to \Z(A)$ 
is a linear map satisfying the condition 
$\beta^{\prime\prime} ([[A,A],[A,A]]) = 0$.

Similarly, the Poissonity condition for triples 
$X\otimes a$, $E\otimes b$, $Y\otimes c$ and 
$X\otimes a$, $E\otimes b$, $E\otimes c$ entails that
$$
\gamma^\prime(a,b) = abu + bav + \gamma^{\prime\prime} (b)a
$$
for any $a\in A$, $b\in [A,A]$, where $\gamma^{\prime\prime} : [A,A] \to \Z(A)$ 
is a linear map satisfying the condition 
$\gamma^{\prime\prime} ([[A,A],[A,A]]) = 0$.

At least, the Poissonity condition for triples 
$E\otimes a$, $E\otimes b$, $X\otimes c$, $a,b\in [A,A]$, $c\in A$, and
$E\otimes a$, $E\otimes b$, $E\otimes c$, $a,b,c\in [A,A]$ yields
$$
X \otimes ([c,\delta(a,b)] - \gamma^\prime([c,a],b) - \beta^\prime(a,[c,b])) = 0
$$
and
$$
E \otimes ([c,\delta(a,b)] - \delta([c,a],b) - \delta(a,[c,b])) = 0,
$$
what implies
$$
\delta(a,b) = abu + bav + \beta^{\prime\prime}(a)b + \gamma^{\prime\prime}(b)a 
+ \delta^\prime(a,b)
$$
for any $a,b\in [A,A]$, where $\delta^\prime: [A,A] \times [A,A] \to \Z(A)$ is 
a bilinear map such that $\delta^\prime([c,a],b) + \delta^\prime(a,[c,b]) = 0$ 
for any $a,b,c\in [A,A]$. For $\mathsf{sl}_n(A)$ to be closed under $\Phi$, 
$abu + bav + \delta^\prime(a,b)$ should lie in $[A,A]$, what is equivalent to 
$ab(u+v) + \delta^\prime (a,b) \in [A,A]$ for any $a,b \in [A,A]$.

It remains to collect all the obtained maps, to check in a straightforward way 
that they are indeed Poisson structures on $\mathsf{sl}_n(A)$ (in fact, we 
already did that in almost all the cases by considering various Poissonity 
conditions on basic elements of $\mathsf{sl}_n(A)$), and to take the quotient 
by trivial Poisson structures. As $\mathsf{sl}_n(A)$ is perfect, the trivial 
Poisson structures are of the form $(x,y) \mapsto \omega([x,y])$ where $\omega$
belongs to the centroid of $\mathsf{sl}_n(A)$. The latter can be computed in 
exactly the same way as the space of Poisson structures on $\mathsf{sl}_n(A)$ 
(though the computations are much simpler): the centroid of $\mathsf{sl}_n(A)$,
$n\ge 3$, is isomorphic to $\Z(A)$ and consists of the maps 
$$
(X\otimes a, Y\otimes b) \mapsto XY\otimes abu - YX\otimes bau
$$
for some $u\in \Z(A)$, where either $X\in \mathsf{sl}_n(K)$ and $a\in A$, or 
$X=E$ and $a\in [A,A]$, the same for $Y$ and $b$ 
(this is also noted without proof in \cite[Remark 5.6]{krylyuk}).
\end{proof}

\begin{remark}
Corollary \ref{poisson-classical} and Theorem \ref{sln} overlap in the case 
$\mathfrak g = \mathsf{sl}_n(K)$, $n\ge 3$, and $A$ commutative.
\end{remark}

\begin{remark}
One may treat the case of $\mathsf{sl}_2(A)$ along the same lines, but this
case, as it is often happens in such situations, is much more cumbersome.
Let us briefly outline the relevant reasonings, without going into details.
As every Poisson structure on $\mathsf{sl}_2(K)$ is trivial, we may assume in 
(\ref{yoyo3}) $\beta = -\alpha$. Using that, and the identity
$\Tr(XYZ) + \Tr(XZY) = 0$ which holds for any three $2 \times 2$ traceless 
matrices, one can easily single out in (\ref{yoyo3}), like in the general case, 
the term containing $\Tr(XY)Z$, but the remaining reasonings are cumbersome:
there are not ``enough'' elements in $\mathsf{sl}_2(K)$ to separate all the 
terms in (\ref{yoyo3}). Say, substituting into (\ref{yoyo3}) all possible 
values of the standard $\mathsf{sl}_2(K)$-basis
$\Big\{ 
\left(\begin{matrix} 1 & 0 \\ 0 & -1 \end{matrix}\right),
\left(\begin{matrix} 0 & 1 \\ 0 & 0 \end{matrix}\right),
\left(\begin{matrix} 0 & 0 \\ 1 & 0 \end{matrix}\right)
\Big\},
$
we get a (highly redundant) homogeneous linear system with rational coefficients
of $27$ equations in $7$ unknowns, which reduces to $4$ independent relations. 
Using these relations, one can derive that 
$$
\alpha(a,b) = u(ab+ba) + (ab+ba)u
$$ 
for some $u\in A$ satisfying some additional conditions, but the relation between $\gamma$ and $\alpha$ is more
involved.
\end{remark}

When dealing with the identity (\ref{yoyo3}), we were able to separate the terms
by picking concrete matrices $X$, $Y$, $Z$. Computer experiments suggest
that almost any $3$ traceless matrices will do.

\begin{question}\label{ques-matr}
Provide an (elegant) algebro-geometric, or linear-algebraic proof of the following
statement: for any three $3 \times 3$ traceless matrices $X$, $Y$, $Z$ in 
general position, the $7$ matrices $XYZ, XZY, YXZ, YZX, ZXY, ZYX$, and 
$(\Tr(XYZ) - \Tr(XZY))E$, are linearly independent.
\end{question}

At the end of this section, let us mention another widely studied problem
amenable to our methods. Namely, people studied a lot Lie-admissible 
third power-associative structures compatible with a given Lie algebra 
structure. (Lie-admissibility alone is too general, and third-power 
associativity is one of the most natural additional conditions leading to a 
meaningful theory). This is known to be equivalent to description of all 
symmetric bilinear maps $\Phi: L \times L \to L$ on a given Lie algebra $L$, 
such that
\begin{equation*}
[\Phi(x,y),z] + [\Phi(z,x),y] + [\Phi(y,z),x] = 0
\end{equation*}
for any $x,y,z\in L$ (see, for example, \cite[\S 1]{jkl}).
The resemblance with the $2$-cocycle equation, as well as with Poissonity
condition, is evident.

\begin{question}
Describe Lie-admissible third power-associative structures on current, extended
current, Kac-Moody Lie algebras, and on Lie algebras of the form 
$\mathsf{sl}_n(A)$, similarly to how it is done for Poisson
structures in this and preceding sections.
\end{question}

This should provide, among other, a conceptual approach to results of 
\cite{jkl}, similarly to those to Kubo's results about Poisson structures 
discussed in \S \ref{sec-poisson-current}. These computations should be pretty 
much straightforward, though, perhaps, technically somewhat challenging at 
places.

\section{Hom-Lie structures}\label{sec-homlie}

Hom-Lie algebras (under different names, notably 
``$q$-deformed Witt or Virasoro'') started to appear long time ago in 
the physical literature, in the constant quest for deformed, in that or another
sense, Lie algebra structures bearing a physical significance. Recently there 
was a surge of interest in them, starting with the paper \cite{hls}.

Recall that a \emph{Hom-Lie algebra} $L$ is an algebra with a skew-symmetric
multiplication $\liebrack$ and a linear map $\varphi:L\to L$ such that the
following generalization of the Jacobi identity, called the 
\emph{Hom-Jacobi identity}, holds:
\begin{equation}\label{hom-lie}
[[x,y],\varphi(z)] + [[z,x],\varphi(y)] + [[y,z],\varphi(x)] = 0
\end{equation}
for any $x,y,z\in L$. 
Further variations of this notion arise by requiring $\varphi$ to be a 
homomorphism, automorphism, etc., of an algebra $L$ with respect to the 
multiplication $\liebrack$.

If $(L,\liebrack)$ is a Lie algebra, then a linear map $\varphi:L\to L$ turning
it into a Hom-Lie algebra -- i.e. such that (\ref{hom-lie}) holds -- is called 
a \textit{Hom-Lie structure on $L$}. The set of all Hom-Lie structures on $L$
will be denoted by $\HomLie(L)$. Obviously, it is a subspace of $\End_K(L)$
containing the identity map.

In this section, the characteristic of the base field $K$ is assumed 
$\ne 2,3$.

\begin{theorem}\label{th-homlie}
Let $L$ be a Lie algebra, $A$ is an associative commutative algebra with unit, 
and one of $L$, $A$ is finite-dimensional. Then 
$$
\HomLie (L\otimes A) \simeq \HomLie(L) \otimes A + 
\set{\varphi\in \End_K(L)}{[[L,L],\varphi(L)] = 0} \otimes \End_K(A) .
$$
Each Hom-Lie structure on $L\otimes A$ can be represented as a sum of 
decomposable Hom-Lie structures of the form $\varphi \otimes \alpha$, 
where $\varphi\in \End_K(L)$, $\alpha\in \End_K(A)$, of the following two
types:
\begin{enumerate}
\item $\varphi \in \HomLie(L)$, $\alpha = \R_u$ for some $u\in A$;
\item $[[L,L],\varphi(L)] = 0$.
\end{enumerate}
\end{theorem}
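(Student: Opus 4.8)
The plan is to follow the same linear-algebraic ``separation of variables'' scheme used in the proofs of Theorem \ref{poisson-current} and Lemma \ref{cent-current}, specialized to the one-argument-less, quadratic-in-$L$ situation dictated by the Hom-Jacobi identity \eqref{hom-lie}. First I would write a Hom-Lie structure $\Phi$ on $L\otimes A$, using the finite-dimensionality hypothesis and the isomorphism $\End_K(L\otimes A)\simeq \End_K(L)\otimes\End_K(A)$, as a finite sum $\Phi=\sum_{i\in I}\varphi_i\otimes\alpha_i$ with $\varphi_i\in\End_K(L)$, $\alpha_i\in\End_K(A)$. Plugging $x\otimes a$, $y\otimes b$, $z\otimes c$ into \eqref{hom-lie} and expanding $[[x,y],\varphi(z)]$-type terms yields
$$
\sum_{i\in I}\Big([[x,y],\varphi_i(z)]\otimes ab\,\alpha_i(c)+[[z,x],\varphi_i(y)]\otimes ca\,\alpha_i(b)+[[y,z],\varphi_i(x)]\otimes bc\,\alpha_i(a)\Big)=0 .
$$

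Next I would substitute $c=1$ (here is where unitality of $A$ is used). That collapses two of the three $A$-monomials to $ab$ and, after relabeling, forces
$$
\sum_{i\in I}\big([[x,y],\varphi_i(z)]+[[z,x],\varphi_i(y)]+[[y,z],\varphi_i(x)]\big)\otimes ab\,\alpha_i(1)=\text{(one leftover term)} ,
$$
so that, replacing the $\varphi_i$ by suitable linear combinations, I may assume each $\varphi_i$ individually satisfies the Hom-Jacobi identity on $L$, i.e.\ $\varphi_i\in\HomLie(L)$. Feeding this back into the full identity, the two terms that survived the $c=1$ substitution cancel and what remains is
$$
\sum_{i\in I}[[x,y],\varphi_i(z)]\otimes\big(bc\,\alpha_i(a)-ab\,\alpha_i(c)\big)=0
$$
(after symmetrizing appropriately in the three slots so that only one of the three $[[\cdot,\cdot],\varphi_i(\cdot)]$ terms is isolated — this is the exact analogue of the symmetrization step in the proof of Theorem \ref{poisson-current}). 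From here a partition $I=I_1\cup I_2$ emerges: for $i\in I_1$ the $L$-factor $[[L,L],\varphi_i(L)]$ vanishes (type (ii)), and for $i\in I_2$ the relation $bc\,\alpha_i(a)=ab\,\alpha_i(c)$ holds for all $a,b,c\in A$, which (setting $b=c=1$) forces $\alpha_i(a)=a\,\alpha_i(1)=\R_{u_i}(a)$ with $u_i=\alpha_i(1)$, giving type (i). The converse — that every decomposable map of type (i) or (ii) actually lies in $\HomLie(L\otimes A)$ — is a direct check by substituting into \eqref{hom-lie}: for type (i) it reduces to the Hom-Jacobi identity for $\varphi$ times the scalar $abc\,u$, and for type (ii) each summand is already zero.

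The main obstacle, as in the Poisson case, is bookkeeping the symmetrization correctly: the Hom-Jacobi identity is cyclic rather than fully (anti)symmetric, so I must be careful when passing from ``a sum over $i$ of cyclic expressions vanishes'' to ``each term can be isolated.'' Concretely, after the $c=1$ reduction one has an identity of the form $\sum_i T_i(x,y;z)\otimes Q_i(a,b;c)=0$ where $T_i$ is cyclic-antisymmetric in the way \eqref{hom-lie} dictates and $Q_i(a,b;c)=bc\,\alpha_i(a)-ab\,\alpha_i(c)$; one symmetrizes over the pair of slots in which $Q_i$ is antisymmetric to kill the dependence on the third and thereby separate the $L$-part from the $A$-part. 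I would also need the minor hypothesis $\mathrm{char}\,K\neq 2,3$ precisely to make these symmetrizations (dividing by $2$ and $3$) legitimate, exactly as flagged before the statement. Everything else is the routine verification that the two families of decomposable maps span $\HomLie(L\otimes A)$ and that the displayed isomorphism is just the abstract restatement of this spanning plus the obvious independence of the two summands modulo their intersection.
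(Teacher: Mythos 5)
Your overall plan (decompose $\Phi=\sum_i\varphi_i\otimes\alpha_i$, separate the $L$- and $A$-variables, extract a partition of the index set) is the right one, but the first key step fails as written. In the Hom--Jacobi identity for $L\otimes A$ the three $A$-factors are $ab\,\alpha_i(c)$, $ca\,\alpha_i(b)$, $bc\,\alpha_i(a)$: the map $\alpha_i$ is applied to the \emph{distinguished} variable of each term, not to a common pair as in the Poissonity condition (where the factors $c\alpha_i(a,b)$, $\alpha_i(ca,b)$, $\alpha_i(a,cb)$ all collapse to $\alpha_i(a,b)$ at $c=1$). Substituting $c=1$ here yields the three \emph{distinct} expressions $ab\,\alpha_i(1)$, $a\,\alpha_i(b)$, $b\,\alpha_i(a)$, so it does not ``collapse two of the three monomials to $ab$'' and does not isolate the Hom--Jacobi expression $[[x,y],\varphi_i(z)]+[[z,x],\varphi_i(y)]+[[y,z],\varphi_i(x)]$ against a single $A$-factor. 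You have transplanted the structure of the Poisson computation onto an identity that does not have it, and the conclusion ``each $\varphi_i\in\HomLie(L)$'' is not justified by your argument.

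The paper's actual device is different: it cyclically permutes $x,y,z$ (keeping $a,b,c$ fixed) and sums the three resulting identities, which produces
$$
\sum_{i\in I}\Bigl([[x,y],\varphi_i(z)]+[[z,x],\varphi_i(y)]+[[y,z],\varphi_i(x)]\Bigr)\otimes\Bigl(ab\,\alpha_i(c)+ca\,\alpha_i(b)+bc\,\alpha_i(a)\Bigr)=0,
$$
and then observes that the vanishing of the second factor forces $\alpha_i=0$ (set $a=b=c=1$ to get $3\alpha_i(1)=0$, then $b=c=1$; this is where $\operatorname{char}K\ne 3$ enters), so every $\varphi_i$ must satisfy the Hom--Jacobi identity. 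Your subsequent display is also off: after substituting the Hom--Jacobi relation back and symmetrizing in $x,z$, one does not isolate a single term $[[x,y],\varphi_i(z)]$ but the difference $[[x,y],\varphi_i(z)]-[[y,z],\varphi_i(x)]$ paired with $ab\,\alpha_i(c)-bc\,\alpha_i(a)$; the vanishing of that difference only gives $[[L,L],\varphi_i(L)]=0$ after combining with the Hom--Jacobi identity (again using $\operatorname{char}K\ne3$). The endgame (the partition $I=I_1\cup I_2$, $\alpha_i=\R_{u_i}$ from $ab\,\alpha_i(c)=bc\,\alpha_i(a)$, and the converse verifications) is correct, but the proof needs the cyclic symmetrization, or some equivalent device, in place of the $c=1$ substitution to get off the ground.
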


\begin{proof}
As usual in our approach, decompose a Hom-Lie structure $\Phi$ on 
$L\otimes A$ as $\Phi = \sum_{i\in I} \varphi_i \otimes \alpha_i$ for suitable
linear maps $\varphi_i: L \to L$ and $\alpha_i: A \to A$. The Hom-Jacobi identity
(\ref{hom-lie}) then reads
\begin{equation}\label{yoyo10}
\sum_{i\in I} 
  [[x,y],\varphi_i(z)] \otimes ab\alpha_i(c)
+ [[z,x],\varphi_i(y)] \otimes ca\alpha_i(b) 
+ [[y,z],\varphi_i(x)] \otimes bc\alpha_i(a) = 0
\end{equation}
for any $x,y,z\in L$, $a,b,c\in A$.

Cyclically permuting in this equality $x,y,z$, and summing up the obtained $3$ 
equalities, we get:
$$
\sum_{i\in I} 
\Big([[x,y],\varphi_i(z)] + [[z,x],\varphi_i(y)] + [[y,z],\varphi_i(x)] \Big)
\otimes 
\Big(ab\alpha_i(c) + ca\alpha_i(b) + bc\alpha_i(a)\Big)
= 0 .
$$
Easy transformations show (at this place the assumption that the 
characteristic of the ground field $\ne 2,3$ is essential) that the vanishing 
of the second tensor factor here, 
$ab\alpha_i(c) + ca\alpha_i(b) + bc\alpha_i(a)$, implies the vanishing of 
$\alpha_i$, hence the first tensor factor vanishes for each $\varphi_i$, i.e. each $\varphi_i$
is a Hom-Lie structure on $L$. Writing the latter condition as
$$
[[z,x],\varphi_i(y)] = - [[x,y],\varphi_i(z)] - [[y,z],\varphi_i(x)]
$$
and substituting this back to (\ref{yoyo10}), we get:
$$
\sum_{i\in I} 
  [[x,y],\varphi_i(z)] \otimes \Big(ab\alpha_i(c) - ca\alpha_i(b)\Big)
+ [[y,z],\varphi_i(x)] \otimes \Big(bc\alpha_i(a) - ca\alpha_i(b)\Big)
= 0
$$
for any $x,y,z\in L$, $a,b,c\in A$.

Symmetrizing the last equality with respect to $x,z$, we get:
$$
\sum_{i\in I} 
\Big([[x,y],\varphi_i(z)] - [[y,z],\varphi_i(x)]\Big) \otimes 
\Big(ab\alpha_i(c) - bc\alpha_i(a)\Big)
= 0 .
$$
The vanishing of the first tensor factor here, 
$[[x,y],\varphi_i(z)] - [[y,z],\varphi_i(x)]$, together with the Hom-Jacobi 
identity (\ref{hom-lie}), implies that 
\begin{equation}\label{triv}
[[L,L],\varphi_i(L)] = 0 .
\end{equation}
The vanishing of the second tensor factor, $ab\alpha_i(c) - bc\alpha_i(a)$,
implies that $\alpha_i(a) = a\alpha_i(1)$. Hence we may split the indexing
set into two subsets: $I = I_1 \cup I_2$ such that $\varphi_i$ satisfies
(\ref{triv}) for $i\in I_1$, and $\varphi_i\in \HomLie(L)$ and 
$\alpha_i(a) = au_i$ for some $u_i\in A$, for $i\in I_2$.
It is obvious that for each $i\in I_1$, $i\in I_2$, the decomposable map
$\varphi_i \otimes \alpha_i$ satisfies the Hom-Jacobi identity 
(\ref{yoyo10}), and we are done.
\end{proof}

Hom-Lie structures on simple classical Lie algebras were described in \cite{jl},
and it is natural to try to extend this result to Kac-Moody algebras. However, 
a direct attempt to generalize Theorem \ref{th-homlie} to extended current Lie 
algebras, like it is done for Poisson structures in 
\S \ref{sec-poisson-current}, meets certain technical difficulties.

\begin{question}
Describe Hom-Lie structures on extended current and Kac-Moody Lie algebras,
and on Lie algebras of the form $\mathsf{sl}_n(A)$.
\end{question}

\section{Dual operads and affinizations of Novikov algebras}\label{sec-dual}

The following is a well-known phenomenon from the operadic theory: if $A$ and
$B$ are algebras over binary quadratic operads (Koszul) dual to each other, then 
their tensor product $A \otimes B$ equipped with the bracket 
\begin{equation}\label{eq-ab}
[a \otimes b, a^\prime \otimes b^\prime] = 
aa^\prime \otimes bb^\prime - a^\prime a \otimes b^\prime b
\end{equation}
for $a,a^\prime\in A, b,b^\prime \in B$, becomes a Lie algebra 
(\cite[Theorem 2.2.6(b)]{gk}). The most famous pairs of dual operads are
(Lie, associative commutative) and (associative, associative). In the first 
case, this constructions leads to current Lie algebras, and in the second 
one -- to Lie algebras of the form $(A \otimes B)^{(-)}$ for two associative 
algebras $A$, $B$, of which $\mathsf{gl}_n(A)$ and $\mathsf{sl}_n(A)$ are 
important special cases.

We are going now to describe another broad and interesting class of Lie 
algebras, which fits into this scheme for yet another pair of dual operads.

The ground field $K$ is arbitrary, unless specified otherwise.

Recall that an algebra is called \textit{left Novikov} if it satisfies two identities:
\begin{equation}\label{left-novikov1}
(xy)z - x(yz) = (yx)z - y(xz) 
\end{equation}
and
\begin{equation*}
(xy)z = (xz)y .
\end{equation*}
If an algebra satisfies the opposite identities 
(the order of multiplication is reversed):
\begin{equation}\label{right-novikov1}
(xy)z - x(yz) = (xz)y - x(zy)
\end{equation}
and
$$
x(yz) = y(xz) ,
$$
then it is called \textit{right Novikov}. By just \textit{Novikov algebras} we 
will mean algebras which are either left or right Novikov. 

Every associative commutative algebra is (both left and right) Novikov.
An important feature of Novikov algebras is that they are Lie-admissible.

Let $N$ be a left Novikov algebra, $G$ a commutative semigroup (written multiplicatively),
and $\chi: G \to K$ a map.
Define a bracket on the tensor product $N\otimes K[G]$, where $K[G]$ is the semigroup
algebra, as follows:
\begin{equation}\label{eq-brack}
[x \otimes a, y \otimes b] = \Big(\chi(a)xy - \chi(b)yx\Big) \otimes ab ,
\end{equation}
where $x,y \in N$ and $a,b \in G$.
This bracket is obviously anticommutative.

\begin{lemma}\label{lemma-jacobi}
For a fixed $G$ and $\chi$ as above, the bracket {\rm (\ref{eq-brack})} 
satisfies the Jacobi identity for an arbitrary left Novikov algebra $N$ if an 
only if
\begin{equation}\label{eq-char}
\chi(ab) - \chi(ac) = \chi(b) - \chi(c)
\end{equation}
for any $a,b,c\in G$.
\end{lemma}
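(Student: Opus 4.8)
The plan is to write out the Jacobi identity for the bracket (\ref{eq-brack}) on three generic elements $x \otimes a$, $y \otimes b$, $z \otimes c$, expand everything using (\ref{eq-brack}) twice, and then sort the resulting expression into a sum of terms, each of the shape (a scalar depending on $\chi$) times (a product of $x$, $y$, $z$ in some fixed association and order) tensored with $abc \in K[G]$. Since the coefficient in front of each such Novikov-monomial must vanish identically in $N$ for the identity to hold for \emph{all} left Novikov algebras, I would then read off the constraints on $\chi$ from those coefficients.

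\medskip

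Concretely, first I compute $[[x\otimes a, y\otimes b], z\otimes c]$. The inner bracket is $(\chi(a)xy - \chi(b)yx)\otimes ab$, so the outer one expands to
$$
\chi(ab)\big(\chi(a)xy - \chi(b)yx\big)z \otimes abc - \chi(c)\, z\big(\chi(a)xy - \chi(b)yx\big) \otimes abc.
$$
Writing the analogous expansions for the two cyclic permutations and adding, I get a big $K[G]$-homogeneous element (everything sits in $N \otimes abc$) which is a $K$-linear combination of the twelve associative-order monomials $(xy)z, (yx)z, z(xy), z(yx)$ and their cyclic images, with coefficients that are quadratic expressions in the values of $\chi$. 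The key algebraic input is then to use the two left Novikov identities (\ref{left-novikov1}) and $(xy)z=(xz)y$ to reduce these twelve monomials to a basis of the free left Novikov algebra on three generators in multidegree $(1,1,1)$ — a small, explicitly computable space — and demand that the total coefficient of each basis element vanish. This produces a system of quadratic equations in $\chi$; the claim is that this system is equivalent to the single linear relation (\ref{eq-char}).

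\medskip

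For the converse direction I would assume (\ref{eq-char}) and substitute it into the expanded Jacobi expression, checking that all coefficients collapse — here it is convenient to note that (\ref{eq-char}) forces $\chi(ab) = \chi(a) + \chi(b) - \chi(1')$ in a suitable sense (more precisely, it says $\chi(ab) - \chi(a)$ depends only on... actually $\chi(ab)-\chi(ac)$ depends only on $b,c$), which lets one rewrite all the quadratic $\chi$-terms additively and see the cancellation directly against the Novikov identities. The main obstacle I anticipate is purely bookkeeping: correctly reducing the twelve degree-$(1,1,1)$ associative monomials modulo the left Novikov relations to a canonical basis without sign or ordering errors, and keeping track of which cyclic permutation contributes which $\chi$-coefficient to which monomial. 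Once that reduction table is in hand, matching coefficients and recognizing (\ref{eq-char}) as the resulting condition should be routine. I would present the forward direction by exhibiting one or two specific monomials whose coefficient vanishing already yields (\ref{eq-char}), and then verify sufficiency by the substitution just described.
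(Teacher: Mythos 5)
Your overall strategy---expand the cyclic sum, reduce the twelve multilinear monomials modulo the two left Novikov identities to a basis of the degree-$(1,1,1)$ component of the free left Novikov algebra (which is $6$-dimensional: the three left-normed classes $(xy)z=(xz)y$, $(yx)z=(yz)x$, $(zx)y=(zy)x$, plus three right-normed representatives), and equate coefficients---is precisely the ``tedious, but straightforward check'' the paper alludes to; the paper supplies no further detail, so there is nothing else to compare against, and your ``if'' direction is unproblematic. The forward direction of your plan, however, does not close as you describe it. Carrying out the reduction, the coefficients of the three right-normed basis elements cancel identically, and the three surviving coefficients are
\begin{equation*}
\chi(a)\bigl(\chi(ab)-\chi(ac)-\chi(b)+\chi(c)\bigr)
\end{equation*}
and its two cyclic relabelings. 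Every one of them carries a prefactor $\chi(\cdot)$, so no monomial's coefficient ``already yields (\ref{eq-char})'' as you intend to exhibit: the system of quadratic equations you obtain is $\chi(a)\bigl(\chi(ab)-\chi(ac)-\chi(b)+\chi(c)\bigr)=0$ for all $a,b,c\in G$, which is implied by (\ref{eq-char}) but is strictly weaker.

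To see that the gap is real and not just a presentational issue, take $G=(\mathbb N_{\ge 1},+)$ and $\chi(n)=1$ for $n$ even, $\chi(n)=0$ for $n$ odd, over a field of characteristic $\ne 2$. Whenever $\chi(a)\ne 0$ the element $a$ is even, so $\chi(a+b)=\chi(b)$ and the bracketed factor vanishes; hence the bracket (\ref{eq-brack}) satisfies the Jacobi identity for every left Novikov algebra $N$, yet $\chi(1+1)-\chi(1+2)=1\ne-1=\chi(1)-\chi(2)$, so (\ref{eq-char}) fails. Thus the ``only if'' direction cannot be extracted from coefficient matching alone; it requires a nondegeneracy hypothesis on $\chi$ (for instance $\chi$ nowhere zero, which covers all the examples (i)--(vii) following the lemma), or else the criterion must be recorded in the weaker form with the $\chi(a)$ prefactor. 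Your writeup should either add such a hypothesis or state the corrected condition; as planned, the step ``exhibit a monomial whose coefficient is (\ref{eq-char})'' would fail.
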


\begin{proof}
A tedious, but straightforward check.
\end{proof}

\begin{remark-nonum}
If $G$ contains a unit $e$, then the property (\ref{eq-char}) is equivalent to
\begin{equation*}
\chi(ab) = \chi(a) + \chi(b) - \chi(e) 
\end{equation*}
for any $a,b\in G$.
\end{remark-nonum}

A map satisfying the property (\ref{eq-char}) will be called a 
\textit{quasi-character of $G$}. If $\chi$ is a quasi-character of a commutative
semigroup $G$, then the Lie algebra defined by the bracket (\ref{eq-brack})
will be denoted as $N_\chi[G]$.
Let us list a few interesting special cases of this construction.

\smallskip

(i) \textit{Witt algebras}.
$G = (\mathbb Z, +)$ (so $K[G] \simeq K[t,t^{-1}]$, the algebra of Laurent 
polynomials), $\chi =$ degree of the monomial, 
and $N = K$.
This is the famous infinite-dimensional (two-sided) Witt algebra $W$ with the basis 
$\set{e_m}{m\in \mathbb Z}$ and multiplication 
$[e_m,e_n] = (m-n)e_{m+n}$, $m,n\in \mathbb Z$. A variation of this 
constructions assumes $K$ has characteristic $p>0$, and 
$G = \mathbb Z/p\mathbb Z$ (so $K[G] \simeq K[t]/(t^p)$), leading to 
the $p$-dimensional Witt algebra.

\smallskip

(ii) \textit{Current algebra over a Witt algebra}.
More generally, taking in the previous example $N$ as an arbitrary associative 
commutative algebra instead of $K$, we get the tensor product of the corresponding Witt 
algebra with $N$.

\smallskip

(iii) \textit{Skew-symmetrization of a Novikov algebra}.
$G = \{1\}$ and $\chi(1) = 1$. This is the Lie algebra $N^{(-)}$, a
skew-symmetrization of $N$.

\smallskip

(iv) \textit{Current algebra over a skew-symmetrization of a Novikov algebra}.
More generally, letting $G$ be arbitrary and $\chi$ is identically $1$,
we get the current Lie algebra 
$$
N^{(-)} \otimes K[G] \simeq (N \otimes K[G])^{(-)} .
$$

\smallskip

(v) \textit{Affinization of a Novikov algebra}.
$G = (\mathbb Z, *)$, where $n*m = n+m-1$ (so $K[G] \simeq K[t,t^{-1}]$ with 
multiplication $t^m t^n = t^{n+m-1}$), $\chi =$ degree of the monomial. The 
resulting Lie bracket is
\begin{equation*}
[x \otimes t^m, y\otimes t^n] = (mxy - nyx) \otimes t^{m+n-1} ,
\end{equation*}
where $x,y\in N$, $n,m\in \mathbb Z$.
This construction, which appeared (in a somewhat implicit form) in the 
pioneering papers \cite{gelf-dorfman} and \cite{novikov} (in the latter, under
the name ``Poisson brackets of hydrodynamic type''), appears also in various 
physical questions, and in the theory of vertex operator algebras (see 
references in \cite{pei-bai2010}, \cite{pei-bai}). It resembles the 
construction of untwisted affine Kac-Moody algebras, and we will call it 
\textit{affinization of the Novikov algebra $N$}.

\smallskip

(vi) \textit{The Heisenberg--Virasoro algebra} (see \cite{pei-bai2010}).
Specializing example (v) to the case where $N$ is the algebra with the 
basis $\{ x,y \}$ and multiplication table
$$\begin{array}{c|cc}
   & x & y          \\
\hline
x  & x & 0    \\
y  & y & 0   
\end{array}$$
we get a nilpotent extension of the two-sided Witt algebra
$\langle e_m, h_m \,|\, m\in \mathbb Z\rangle$ with multiplication table
\begin{equation}\label{eq-heisen}
[e_m,e_n] = (m-n)e_{m+n} , \qquad [e_m,h_n] = -n h_{m+n} , \qquad [h_m,h_n] = 0
\end{equation}
for $m,n\in \mathbb Z$.

\smallskip

(vii) \textit{The Schr\"odinger--Virasoro algebra} (see \cite{pei-bai}).
Specializing example (v) to the case where $N$ is the algebra with the basis 
$\{ x,y,z \}$ and multiplication table
$$\begin{array}{c|ccc}
   & x & y & z          \\
\hline
x  & x & 0 & \frac 12 z \\
y  & y & 0 & 0          \\
z  & z & y & 0
\end{array}$$
we get a nilpotent extension of the two-sided Witt algebra
$\langle e_m, h_m, f_r \,|\, m\in \mathbb Z, r \in \frac 12 + \mathbb Z \rangle$
with multiplication rules between the basic elements, in addition to relations 
(\ref{eq-heisen}), as follows:
\begin{equation*}
[e_m,f_r] = \big(\frac{m}{2} - r\big)f_{m+r} , \qquad 
[f_r,f_s] = (r-s)h_{r+s} , \qquad 
[h_m,f_r] = 0 
\end{equation*}
for $m\in \mathbb Z$, $r,s\in \frac 12 + \mathbb Z$.

\smallskip

As we see, most of the interesting instances of $N_\chi[G]$ are 
infinite-dimensional, physically-motivated Lie algebras of characteristic zero.
We suggest that this construction may serve as an organizing principle in an
entirely different, at the first glance, context -- in the unruly world of 
simple finite-dimensional Lie algebras over fields of small characteristics.

\begin{question}
Is it possible to realize the so-called bi-Zassenhaus algebras over a field of 
characteristic $2$ introduced in \cite{jurman}, as Lie algebras of the form
$N_\chi[G]$ for suitable $N$, $G$ and $\chi$? 
\end{question}

Bi-Zassenhaus algebras form a family parametrized by two integers 
$g\ge 2, h\ge 1$ and are constructed as follows. Consider a Lie algebra with 
the basis 
$\set{e_{(i,\alpha)}}{i\in \mathbb Z/2\mathbb Z, \alpha\in \mathbb Z/2^{g+h}\mathbb Z}$
and multiplication table:
\begin{align*}
[e_{(0,\alpha)}, e_{(0,\beta)}] &= (\alpha + \beta) e_{(0,\alpha + \beta)} \\
[e_{(0,\alpha)}, e_{(1,\beta)}] &= (\alpha + \beta) e_{(1,\alpha + \beta)} \\
[e_{(1,\alpha)}, e_{(1,\beta)}] &= (\alpha^{2^g-1} + \beta^{2^g-1}) e_{(0,\alpha + \beta)} .
\end{align*}
This algebra possess an ideal which is a simple Lie algebra looking, at the 
first glance, very different from the simple modular Lie algebras of Cartan 
type\footnote{
However, recently is was shown that it is isomorphic to a suitable Hamiltonian 
algebra under a highly non-trivial isomorphism, see \cite{grishkov} and
\cite[\S 3]{leites-jurman}.
}. Note that it is not a skew-symmetrization of a Novikov algebra, so $G$ in 
such realization should contain more than one element. On the other hand, 
dimension of $N$ should be at least $3$, as a seemingly direct approach -- 
take a $2$-dimensional Novikov algebra to parametrize the first coordinate in 
the set of indices -- does not work, as a quick glance at the list of such 
algebras (for example, in \cite[\S 2]{bai-meng}) reveals.

\begin{question}
The same question for Melikyan algebras -- a series of simple Lie algebras over
a field of characteristic $5$ which are neither of classical, nor of Cartan 
type, and is peculiar to that characteristic (see \cite[\S 4.3]{strade}).
\end{question}

This question is, apparently, more difficult.

\bigskip

It is remarkable (though pretty much straightforward) that, actually, the bracket (\ref{eq-brack}) is a particular 
case of (\ref{eq-ab}). Namely, as noted in \cite[\S 4]{dzhu-novikov}, 
the left Novikov and right Novikov operads are dual to each other, hence if $A$
is a left Novikov algebra and $B$ is a right Novikov one, their tensor product
$A \otimes B$ equipped with the bracket (\ref{eq-ab}), is a Lie algebra.
If $G$ is a commutative semigroup and $\chi$ its quasi-character, then the semigroup algebra 
$K[G]$ equipped with multiplication 
$a \cdot b = \chi(a)ab$ for $a,b\in G$, becomes a right Novikov algebra,
and the bracket (\ref{eq-brack}) becomes a particular case of the bracket 
(\ref{eq-ab}) with $A = N$ and $B = (K[G], \cdot)$.

\bigskip

In \cite{pei-bai2010}, \cite{pei-bai} it was demonstrated how central extensions
of algebras from the examples (vi) and (vii) above, can be realized in terms
of some bilinear forms on the underlying Novikov algebra $N$. We suggest that
these results are instances of a more general principle, which is a 
generalization of results of \cite{low}, \cite{without-unit} and 
\S\S {\rm \ref{sec-poisson-current}-\ref{sec-homlie}} of this paper, from 
current Lie algebras and Lie algebras of the form $\mathsf{sl}_n(A)$, to Lie 
algebras with the bracket (\ref{eq-ab}):

\begin{question}\label{quest-dual}
Describe, as much as possible, low-degree (co)homology, invariant bilinear 
forms, Poisson structures, Hom-Lie structures on Lie algebras of the form
$(A \otimes B)^{(-)}$, where $A$, $B$ are algebras over dual binary
quadratic operads, in terms of the underlying algebras $A$ and $B$.
\end{question}

As we have seen at the beginning of \S \ref{sec-poisson-gl}, even in the 
``classical'' case, when both $A$ and $B$ are associative, this can be 
unfeasible in some cases, so it is interesting to understand which concrete
properties of operads turn that or another problem of that type to a tractable one.

At least, we expect that all this is entirely tractable for algebras of the form
$N_\chi[G]$:

\begin{question}\label{quest-nchi}
Describe low-degree (co)homology, invariant bilinear forms, Poisson structures,
Hom-Lie structures on Lie algebras of the form $N_\chi[G]$ in terms of 
$N$ and $G$.
\end{question}

Specializing these conjectural description further to the case (v) above,
we should get the corresponding results for affinizations of Novikov algebras, 
similarly to how the results for Kac-Moody algebras are derived from those for
current Lie algebras.

\bigskip

We conclude with yet another couple of problems.

Arguing from the physical perspective, Bai, Meng and He in \cite{fermionic} called 
the left Novikov algebras \textit{bosonic}, and introduced a new class of so-called
fermionic Novikov algebras as algebras satisfying two identities:
the left-symmetric identity (\ref{left-novikov1}), and
$$
(xy)z = - (xz)y .
$$
We will call such algebras \textit{left fermionic Novikov}. The dual operad to the
left fermionic Novikov operad is \textit{right fermionic Novikov}, governed by two
identities: the right-symmetric identity (\ref{right-novikov1}), and
$$
x(yz) = - y(xz) .
$$

\begin{question}
Are there ``interesting'' Lie algebras realized as {\rm (\ref{eq-ab})} in this 
context, i.e. as skew-sym\-me\-tri\-za\-ti\-on of the tensor product of left 
and right fermionic Novikov algebras?
\end{question}

\begin{question}
To superize constructions of this section. Consider Problems \ref{quest-dual} 
and \ref{quest-nchi} for superalgebras.
\end{question}

We expect that this conjectural superization will provide an alternative view
(and probably much more) on the so-called stringy Lie superalgebras, see 
\cite{kac-leur}, \cite{vietnamica} and \cite[Appendix D3, \S 8]{berezin}.

\section*{Acknowledgements}

Thanks are due to 
Rutwig Campoamor-Stursberg, 
Askar Dzhumadil'daev,
Ale\-xan\-der Grishkov,
Dimitry Leites,
Johan van de Leur,
Nikolai Vavilov and Yuri Zarhin
for useful discussions and/or bringing important papers to my attention.
Thanks are also due to the anonymous referee for pointing out a few 
inaccuracies in the text and suggesting many improvements.

Sage software (\texttt{http://sagemath.org/}) version 4.3.2 was utilized
to make conjectures and to verify some calculations performed in this paper.

This work was supported by grants ERMOS7 (Estonian Science Foundation
and Marie Curie Actions) and ETF9038 (Estonian Science Foundation).
Part of this work was done during my visit to IH\'ES at February 2012.


\begin{thebibliography}{NRSB}

\bibitem[BM]{bai-meng} C. Bai and D. Meng, 
\emph{The classification of Novikov algebras in low dimensions}, 
J. Phys. A \textbf{34} (2001), 1581--1594.

\bibitem[BMH]{fermionic} \bysame, \bysame{} and L. He, 
\emph{On fermionic Novikov algebras}, J. Phys. A \textbf{35} (2002), 10053--10063.

\bibitem[BaNo]{novikov} A.A. Balinskii and S.P. Novikov, 
\emph{Poisson brackets of hydrodynamic type, Frobenius algebras and Lie 
algebras}, Dokl. Akad. Nauk SSSR \textbf{283} (1985), 1036--1039 (in Russian);
Soviet Math. Dokl. \textbf{32} (1985), 228--231 (English translation).

\bibitem[BeNe]{benkart-neher} G. Benkart and E. Neher, 
\emph{The centroid of extended affine and root graded Lie algebras},
J. Pure Appl. Algebra \textbf{205} (2006), 117--145; 
\textsf{arXiv:math/0502561}.

\bibitem[BO]{benkart-osborn} \bysame{} and J.M. Osborn, 
\emph{Flexible Lie-admissible algebras}, J. Algebra \textbf{71} (1981), 11--31.

\bibitem[Be]{berezin} F.A. Berezin, \emph{Introduction to Superanalysis}, 
2nd revised ed. with an appendix 
\emph{Seminar on Supersymmetry. Vol. \rm{1}$\frac 12$} (ed. D. Leites), 
MCCME, to appear (in Russian).

\bibitem[BLLS]{leites-jurman}  
S. Bouarroudj, A. Lebedev, D. Leites and I. Shchepochkina,
\emph{Deforms of Lie algebras in characteristic $2$: semi-trivial for Jurman 
algebras, non-trivial for Kaplansky algebras}; \textsf{arXiv:1301.2781v1}.

\bibitem[Bo]{bourbaki} N. Bourbaki, 
\emph{Lie Groups and Lie Algebras, Chapters 1--3}, Springer, 1989
(translated from the original French edition: Hermann, 1972).

\bibitem[C]{C} J.L. Cathelineau,
\emph{Homologie de degr\'e trois d'alg\`ebres de Lie simple d\'eploy\'ees 
\'etendues \`a une alg\`ebre commutative},
Enseign. Math. \textbf{33} (1987), 159--173.

\bibitem[D1]{dzhu-al} A.S. Dzhumadil'daev, 
\emph{Abelian extensions of modular Lie algebras},
Algebra i Logika \textbf{24} (1985), 3--12; Correction: \textbf{28} (1989), 241
(in Russian);
Algebra and Logic \textbf{24} (1985), 1--7; Correction: \textbf{28} (1989), 162
(English translation).

\bibitem[D2]{dzhu-sbornik} \bysame,
\emph{Cohomology of truncated coinduced representations of Lie algebras of 
positive characteristic}, 
Mat. Sbornik \textbf{180} (1989), 456--468 (in Russian);
Math. USSR Sbornik \textbf{66} (1990), 461--473 (English translation).

\bibitem[D3]{dzhu-novikov} \bysame, 
\emph{Codimension growth and non-Koszulity of Novikov operad}, 
Comm. Algebra \textbf{39} (2011), 2943--2952; loosely corresponds to
\textsf{arXiv:0902.3187} + \textsf{arXiv:0902.3771}.

\bibitem[FaS]{farn-s} R. Farnsteiner and H. Strade, 
\emph{Shapiro's lemma and its consequences in the cohomology theory of modular 
Lie algebras}, Math. Z. \textbf{206} (1991), 153--168.

\bibitem[FeS]{fs} J. Feldvoss and H. Strade, 
\emph{Restricted Lie algebras with bounded cohomology and related classes of algebras},
Manuscr. Math. \textbf{74} (1992), 47--67.

\bibitem[GD]{gelf-dorfman} I.M. Gelfand and I.Ya. Dorfman, 
\emph{Hamiltonian operators and algebraic structures related to them},
Funkts. Anal. Prilozh. \textbf{13} (1979), $\mathcal{N}$4, 13--30 (in Russian);
Funct. Anal. Appl. \textbf{13} (1979), 248--262 (English translation).

\bibitem[GP]{gelfand-ponomarev} \bysame{} and V.A. Ponomarev, 
\emph{Indecomposable representations of the Lorentz group}, 
Uspekhi Mat. Nauk \textbf{140} (1968), $\mathcal{N}$2, 3--60 (in Russian);
Russ. Math. Surv. \textbf{23} (1968), $\mathcal{N}$2, 1--59 
(English translation).

\bibitem[GK]{gk} V. Ginzburg and M. Kapranov, \emph{Koszul duality for operads},
Duke Math. J. \textbf{76} (1994), 203--272; Erratum: \textbf{80} (1995), 293;
\textsf{arXiv:0709.1228}.

\bibitem[Gr]{grishkov} A. Grishkov, 
\emph{On simple Lie algebras over a field of characteristic $2$},
J. Algebra \textbf{363} (2012), 14--18.

\bibitem[GLS]{vietnamica} P. Grozman, D. Leites and I. Shchepochkina, 
\emph{Lie superalgebras of string theories}, 
Acta Math. Vietnam. \textbf{26} (2001), 27--63; \textsf{arXiv:hep-th/9702120}.

\bibitem[G\"u]{gundogan} H. G\"undo\u{g}an, 
\emph{Lie algebras of smooth sections}, Diploma Thesis, 
Technische Univ. Darmstadt, 2007; \textsf{arXiv:0803.2800v4}.

\bibitem[HLS]{hls} J. Hartwig, D. Larsson and S. Silvestrov,
\emph{Deformations of Lie algebras using $\sigma$-derivations},
J. Algebra \textbf{295} (2006), 314--361; \textsf{arXiv:math/0408064}.

\bibitem[HR]{heller-reiner} A. Heller and I. Reiner, 
\emph{Indecomposable representations}, 
Illinois J. Math. \textbf{5} (1961), 314--323.

\bibitem[Ja]{jacobson} N. Jacobson, \emph{Lie Algebras}, Dover, 1979
(reprint of Interscience Publ., 1962).

\bibitem[JKL]{jkl} K. Jeong, S.-J. Kang and H. Lee, 
\emph{Lie-admissible algebras and Kac-Moody algebras}, 
J. Algebra \textbf{197} (1997), 492--505.

\bibitem[JL]{jl} Q. Jin and X. Li, 
\emph{Hom-Lie algebra structures on semi-simple Lie algebras},
J. Algebra \textbf{319} (2008), 1398--1408.

\bibitem[Ju]{jurman} G. Jurman, 
\emph{A family of simple Lie algebras in characteristic two}, 
J. Algebra \textbf{271} (2004), 454--481.

\bibitem[Ka]{kac} V.G. Kac, \emph{Infinite Dimensional Lie Algebras},
3rd ed., Cambridge Univ. Press, 1995.

\bibitem[KL]{kac-leur} \bysame{} and J.W. van de Leur, 
\emph{On classification of superconformal algebras},
Strings '88 (ed. S.J. Gates, C.R. Preitschopf and W. Siegel), World Scientific,
1989, 77--106.

\bibitem[Kr]{krylyuk} Ya.S. Krylyuk, 
\emph{On automorphisms and isomorphisms of quasi-simple Lie algebras},
Itogi Nauki i Tekhniki, Sovremennaya Matematika i ee Prilozheniya. 
Tematicheskie Obzory \textbf{58} (1998), Algebra-12 (in Russian);
J. Math. Sci. \textbf{100} (2000), 1944--2002 (English translation).

\bibitem[Ku1]{kubo-kac-moody} F. Kubo,
\emph{Non-commutative Poisson algebra structures on affine Kac-Moody algebras},
J. Pure Appl. Algebra \textbf{126} (1998), 267--286.

\bibitem[Ku2]{kubo-late} \bysame,
\emph{Compatible algebra structures of Lie algebras}, 
Ring Theory 2007 (ed. H. Marubayashi et al.), World Scientific, 2008, 235--239.

\bibitem[LR]{lecomte-roger} P.B.A. Lecomte and C. Roger, 
\emph{Rigidity of current Lie algebras of complex simple Lie type},
J. London Math. Soc. \textbf{37} (1988), 232--240.

\bibitem[NRSB]{nrsb} L.A. Nazarova, A.V. Roiter, V.V. Sergeichuk and 
V.M. Bondarenko, 
\emph{Application of modules over a dyad for the classification of finite 
$p$-groups possessing an abelian subgroup of index $p$ and of pairs of mutually
annihilating operators}, 
Zap. Nauchn. Semin. LOMI \textbf{28} (1972), 69--92 (in Russian);
J. Soviet Math. \textbf{3} (1975), 636--653 (English translation).

\bibitem[PB1]{pei-bai2010} Y. Pei and C. Bai, 
\emph{Realizations of conformal current-type Lie algebras}, 
J. Math. Phys. \textbf{51} (2010), 052302.

\bibitem[PB2]{pei-bai} \bysame{} and \bysame{}, 
\emph{Novikov algebras and Schr\"odinger-Virasoro Lie algebras},
J. Phys. A \textbf{44} (2011), 045201.

\bibitem[S]{strade} H. Strade, 
\emph{Lie Algebras over Fields of Positive Characteristic. I. Structure Theory}, 
de Gruyter, 2004.

\bibitem[Z1]{low} P. Zusmanovich, 
\emph{Low-dimensional cohomology of current Lie algebras and analogs of the 
Riemann tensor for loop manifolds}, 
Lin. Algebra Appl. \textbf{407} (2005), 71--104; \textsf{arXiv:math/0302334}.

\bibitem[Z2]{converse} \bysame, 
\emph{A converse to the Whitehead Theorem},
J. Lie Theory \textbf{18} (2008), 811--815; \textsf{arXiv:0808.0212}.

\bibitem[Z3]{without-unit} \bysame,
\emph{Invariants of current Lie algebras extended over commutative algebras without 
unit}, J. Nonlin. Math. Phys. \textbf{17} (2010), Suppl. 1 
(special issue in memory of F.A. Berezin), 87--102; \textsf{arXiv:0901.1395}.

\bibitem[Z4]{invar-forms} \bysame,
\emph{Non-existence of invariant symmetric forms on generalized Jacobson--Witt 
algebras revisited}, 
Comm. Algebra \textbf{39} (2011), 548--554; \textsf{arXiv:0902.0038}.

\end{thebibliography}
\end{document}